\newtheorem{theorem}{Theorem}[section]
\newtheorem{lemma}[theorem]{Lemma}
\newtheorem{proposition}[theorem]{Proposition}
\theoremstyle{definition}
\newtheorem{remark}[theorem]{Remark}
\newtheorem{assumption}{Assumption}
\newcommand{\IN}{\,\mathrm{in}}
\newcommand{\OUT}{\,\mathrm{out}}
\newcommand{\zmin}{z_{\min}}
\newcommand{\zmax}{z_{\max\phantom{i}\!\!\!}}
\newcommand{\Ahat}{\;\widehat{\phantom{A}}\hspace{-.9em}\mathscr{A}}
\newcommand{\Bhat}{\;\widehat{\phantom{B}}\hspace{-.8em}\mathscr{B}}
\newcommand{\Atilde}{\;\widetilde{\phantom{A}}\hspace{-.9em}\mathscr{A}}
\newcommand{\A}{\mathscr{A}}
\newcommand{\B}{\mathscr{B}}
\title[\uppercase{Relaxation Oscillation and Entry-Exit Function}]{%
Relaxation Oscillations and the Entry-Exit Function
in Multi-Dimensional Slow-Fast Systems}
\thanks{This research was
partially supported by the Fuqua Research Assistant Professorship and
the National Science Foundation.}
\author[Ting-Hao Hsu and Shigui Ruan]{}
\date{\today}
\begin{document}


\begingroup
\def\uppercasenonmath#1{\Large} 
\maketitle
\endgroup

\centerline{Ting-Hao Hsu$^{\dag}$ and Shigui Ruan$^{\dag}$}
\medskip
{\footnotesize
\centerline{$^\dag$Department of Mathematics, University of Miami, Coral Gables, FL, USA}
}

\begin{abstract}
For a slow-fast system of the form
$\dot{p}=\epsilon f(p,z,\epsilon)+h(p,z,\epsilon)$, $\dot{z}=g(p,z,\epsilon)$
for $(p,z)\in \mathbb R^n\times \mathbb R^m$,
we consider the scenario that 
the system has invariant sets $M_i=\{(p,z): z=z_i\}$, $1\le i\le N$,
linked by a singular closed orbit
formed by trajectories of the limiting slow and fast systems.
Assuming that the stability of $M_i$ changes
along the slow trajectories
at certain turning points,
we derive criteria for the existence and stability of 
relaxation oscillations
for the slow-fast system.
Our approach is based on a generalization
of the entry-exit relation
to systems with multi-dimensional fast variables.
We then apply our criteria to several predator-prey systems with
rapid ecological evolutionary dynamics
to show the existence of relaxation oscillations
in these models.
\end{abstract}

\section{Introduction}
\label{sec_intro}
We consider a system
of ordinary differential equations
for $(p,z)\in \mathbb R^n\times \mathbb R^m$
of the form \begin{equation}\label{deq_pz}\begin{aligned}
  &\dot{p}= \epsilon f(p,z,\epsilon)+ h(p,z,\epsilon),
  \\
  &\dot{z}= g(p,z,\epsilon),
\end{aligned}\end{equation}
where $\cdot$ denotes $\frac{d}{d t}$,
the functions $f$, $g$ and $h$ are smooth,
and $\epsilon>0$ is a parameter.
This system is a generalization of
the classical slow-fast systems
in Fenichel~\cite{Fenichel:1979},
where the term $h$ was absent.
In the scenario
that $g$ and $h$
both vanish
on some level sets
${M}_i= \{(p,z):z=z_i\}$,
$i=1,2,\dots,N$,
where $z_i\in \mathbb R^m$ are constants,
each $M_i$ is invariant under \eqref{deq_pz} since $\dot{z}=0$.
System \eqref{deq_pz} restricted on ${M}_i$ is
\begin{equation}\label{deq_pz_M}
  p'= f(p,z_i,\epsilon),
  \quad
  z=z_i,
\end{equation}
where $\prime$ denotes $\frac{d}{d\tau}$
with $\tau=\epsilon t$.
Hence system \eqref{deq_pz} has two {\em distinguished limits}:
The {\em limiting fast system} \begin{equation}\label{fast_pz}
  \dot{p}= h(p,z,0),
  \quad
  \dot{z}= g(p,z,0),
\end{equation}
obtained by setting $\epsilon=0$ in system \eqref{deq_pz},
and the {\em limiting slow system} \begin{equation}\label{slow_pz}
  p'= f(p,z_i,0),
  \quad
  z=z_i,
\end{equation}
obtained by setting $\epsilon=0$ in \eqref{deq_pz_M}.
When there are trajectories $\gamma_i$ of \eqref{fast_pz}
and trajectories $\sigma_i\subset {M}_i$ of \eqref{slow_pz}
such that \begin{equation}\label{config_N}
  \gamma_1\cup \sigma_1
  \cup \gamma_2\cup \sigma_2
  \cup \cdots
  \cup \gamma_N\cup \sigma_N
\end{equation}
forms a closed configuration,
in the spirit of {\em Geometric Singular Perturbation Theory}
(GSPT)
(see e.g.\ 
Fenichel \cite{Fenichel:1979}, Jones~\cite{Jones:1995}
and Kuehn\cite{Kuehn:2016}),
there is potentially a periodic orbit of \eqref{deq_pz}
near configuration \eqref{config_N}
for all small $\epsilon>0$.
However,
in the case that $\sigma_i$
contains {\em turnning points},
at which the stability of $M_i$ changes,
the so-called {\em entry-exit function}
is needed to determine whether
there are trajectories of \eqref{deq_pz}
near the singular orbit.
The classical entry-exit function
was defined for system~\eqref{deq_pz}
with $p$ being a one-dimensional variable
(see
De Maesschalck \cite{DeMaesschalck:2008JDE},
De Maesschalck and Schecter \cite{DeMaesschalck:2016},
Hsu~\cite{Hsu:2017},
Wang and Zhang~\cite{ChengWang:2018}
and references therein).
In the present paper 
we generalize the entry-exit function (see Section~\ref{sec_hyp})
for system~\eqref{deq_pz}
with a multi-dimensional variable $p$.
Using our generalized entry-exit function,
we provide criteria
under which periodic orbits near the singular orbit exist.
Note that if such periodic orbits exist,
they must form a {\em relaxation oscillation}
because the vector field \eqref{deq_pz}
has magnitude of order $O(\epsilon)$ near $\sigma_i$
and has magnitude of order $O(1)$ near $\gamma_i$.

Our objective is
to understand the mechanism
of {\em rapid regime shifts} in ecological systems.
One example is trait oscillations
exhibited in an eco-evolutionary system proposed by
Cortez and Weitz \cite{Cortez:2014-PNAS}.
The system takes the following form.
\begin{equation}\label{deq_CoEvol}\begin{aligned}
  &x'=F(x,\alpha)-G(x,y,\alpha,\beta),
  \\
  &y'=H(x,y,\alpha,\beta)- D(y,\beta),
  \\
  &\epsilon\, \alpha'=\alpha(1-\alpha)\frac{\partial}{\partial \alpha}\left(\frac{x'}{x}\right),
  \\
  &\epsilon\, \beta'=\beta(1-\beta)\frac{\partial}{\partial \beta}\left(\frac{y'}{y}\right),
\end{aligned}\end{equation}
where $x(t)$ and $y(t)$ are the prey and predator densities, respectively,
and $\alpha(t)$ and $\beta(t)$ are the average trait values
of the prey and predators, respectively,
at time $t$.
The functions $F$ and $H$ are related to
the growth rates of the prey and predators, respectively,
$G$ is related to the encounter rate,
and $D$ is related to the death rate of predators.
The equations of $\alpha$ and $\beta$
were derived from the assumption that
the adaptive change in the trait follows fitness-gradient dynamics
(see Abrams et al.\ \cite{Abrams:1993}),
i.e., 
the rate of change of the mean trait value is
proportional to the fitness gradient of an individual with this mean trait value.
In Cortez and Weitz \cite{Cortez:2014-PNAS},
numerical evidences
of periodic orbits oscillating between the level sets,
for $(\alpha,\beta)=(0,0), (0,1), (1,1)$ and $(1,0)$,
were provided
for certain functional responses.
A simulation of a periodic orbit
with data from that paper
is shown in Figure~\ref{fig_CoEvol}.
Applying one of our criteria (Theorem~\ref{thm_main2})
in Section~\ref{sec_coevolution},
besides confirming the existence of periodic orbits,
we determine the limiting configuration
(see Figure~\ref{fig_CoEvol_sing}) of the periodic orbit as $\epsilon\to 0$.
This singular orbit can be used to predict the location of periodic orbits.

\begin{figure}[htbp]
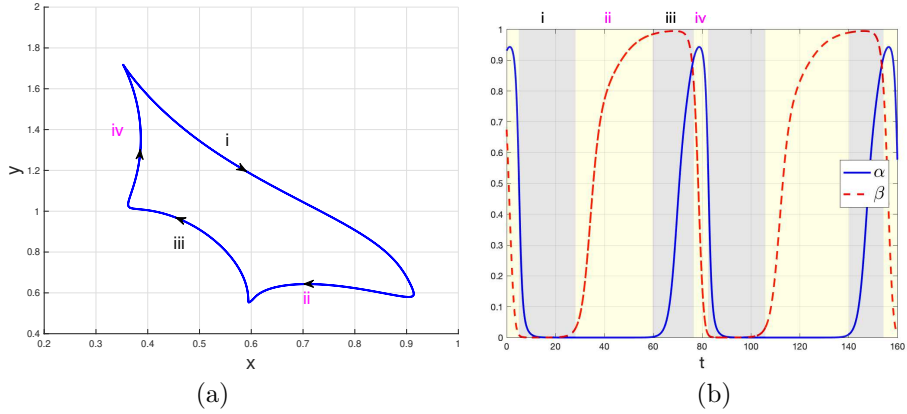

\begin{center}
{\includegraphics[trim =  0cm 0cm 0cm 0cm, clip, width=.46\textwidth]%
{fig_CoEvol_eps25}}
{\includegraphics[trim =  3em 1em 3em 1em, clip, width=.46\textwidth]%
{fig_CoEvol_physical}}
\\
(a)
\hspace{17em}
(b)
\end{center}
\caption{
  A periodic orbit for system \eqref{deq_CoEvol} with $\epsilon=0.25$.
  (a)
  On the $(x,y)$-plane the trajectory can roughly be split into four segments.
  (b)
  The value of $\alpha$ remains close to $0$ along segments {\sf i} and {\sf ii}
  and becomes close to $1$ in segments {\sf iii} and {\sf iv}.
  The value of $\beta$ is close to $0$
  in segments {\sf i} and {\sf iv}
  and is close to $1$ in segments {\sf ii} and {\sf iii}.
}
\label{fig_CoEvol}
\end{figure}

\begin{figure}[htbp]
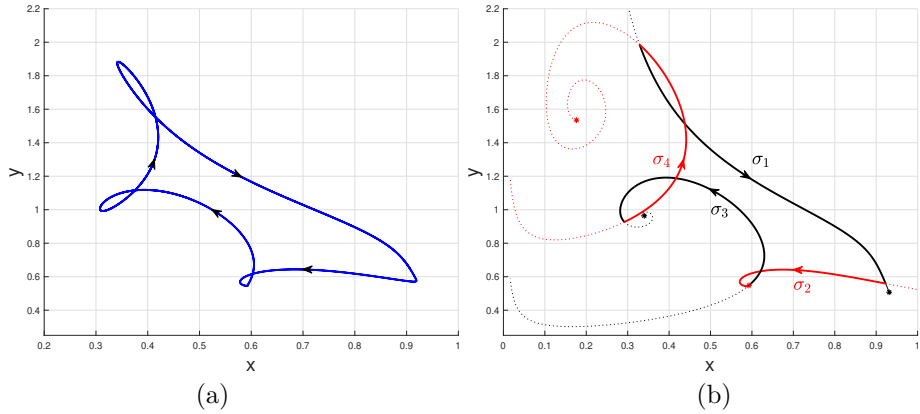

\begin{center}
{\includegraphics[trim =  0cm 0cm 0cm 0cm, clip, width=.46\textwidth]%
{fig_CoEvol_eps10}}
{\includegraphics[trim =  0cm 0cm 0cm 0cm, clip, width=.46\textwidth]%
{fig_CoEvol_sing}}
\\
(a)
\hspace{17em}
(b)
\end{center}
\caption{
(a)
A periodic orbit for system \eqref{deq_CoEvol} with
$\epsilon=0.10$.
(b) A singular closed orbit which consists of trajectories
of limiting subsystems.
}
\label{fig_CoEvol_sing}
\end{figure}

Another example,
proposed by Cortez and Ellner \cite{Cortez:2010},
is a predator-prey system with rapid prey evolution:
\begin{equation}\label{deq_tradeoff}\begin{aligned}
 &x'= x(\alpha+r-kx)-\frac{xy(a\alpha^2+b\alpha+c)}{1+x},
 \\
 &y'=\frac{xy(a\alpha^2+b\alpha+c)}{1+x}-dy,
 \\
 &\epsilon\,\alpha'
 =\alpha(1-\alpha)\left(1-\frac{y(2a\alpha+b)}{1+x}\right)
 \equiv \alpha(1-\alpha)E(x,y,\alpha),
\end{aligned}\end{equation}
which can be regarded as a special case of \eqref{deq_CoEvol}
with $\beta$ being constant.
Periodic orbits
that travel back and forth between the manifolds
${M}_0$
and
${M}_1$
corresponding to $\alpha=0$ and $\alpha=1$, respectively,
was discovered numerically
by Cortez and Ellner~\cite{Cortez:2010}
(see Figure~\ref{fig_tradeoff} for a simulation
with data from that paper).
Note that the sign of $E(x,y,\alpha)$, where $\alpha=0$ (resp.\ $\alpha=1$),
determines whether ${M}_{0}$ (resp.\ ${M}_{1}$)
is attracting or repelling at that point.
It was indicated by those authors that
if the trait oscillation occurs,
at the landing and jumping points on each ${M}_i$
the values of $E$ has opposite signs.
In Section~\ref{sec_tradeoff},
applying our criterion (Theorem~\ref{thm_main1})
we determine two pairs of the landing and jumping points,
$A_1,B_1\in {M}_0$ and $A_2,B_2\in {M}_1$,
by the equations \begin{equation}\label{int_E}
  \int_{\sigma_1}E(x,y,0)\;dt
  =\int_{\sigma_2}E(x,y,1)\;dt
  =0,
\end{equation}
where $\sigma_1$ is a trajectory on ${M}_0$ connecting $A_1$ and $B_1$,
and $\sigma_2$ is a trajectory on ${M}_1$ connecting $A_2$ and $B_2$
(see Figure~\ref{fig_tradeoff}).
The derivation of \eqref{int_E}
is based on the entry-exit functions on $M_i$.
Also we prove that the corresponding periodic orbits are
orbitally locally asymptotically stable.

\begin{figure}[htbp]
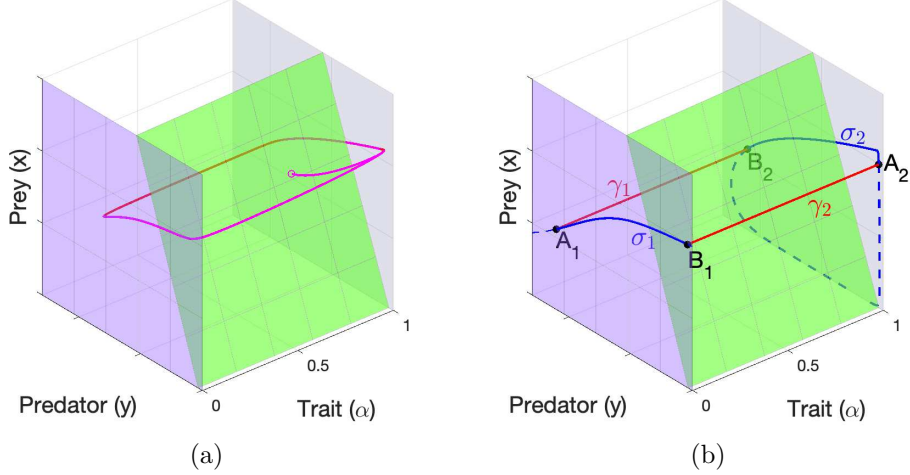

\begin{center}
{\,\includegraphics[trim =  8em 1em 7em 1em, clip, width=.48\textwidth]%
{fig_TradeOff_nearby}}\;
{\,\includegraphics[trim =  8em 1em 7em 1em, clip, width=.48\textwidth]%
{fig_TradeOff_sing}}
\\
(a)
\hspace{17em}
(b)
\end{center}
\caption{
(a)
The trajectory of \eqref{deq_tradeoff} with $\epsilon=0.1$
and initial data $(x,y,\alpha) = (10,0.5,0.5)$
converges to a periodic orbit.
(b) A singular configuration consisting of trajectories
of limiting subsystems,
and is locally uniquely determined by \eqref{int_E}.
}
\label{fig_tradeoff}
\end{figure}

The third example is
a 1-predator-2-prey system
with rapid prey evolution
proposed by Piltz et al.~\cite{Piltz:2017}:
\begin{equation}\label{deq_switching}\begin{aligned}
  &p_1'= r_1p_1- qf_1(p_1)z,
  \\
  &p_2'= r_2p_2- (1-q)f_2(p_2)z,
  \\
  &z'= c_1qf_1(p_1)z+ c_2(1-q)f_2(p_2)z- mz,
  \\
  &\epsilon\, q'
  = q(1-q)\big(c_1f_1(p_1)-c_2f_2(p_2)\big).
\end{aligned}\end{equation}
where $p_1$ and $p_2$ are population densities of two prey species,
$z$ is the population density of predators,
and $q$ is the mean trait value of predators.
The equation of $q'$ is
analogous to the equation of $\alpha'$ in \eqref{deq_CoEvol}.

A two-parameter family of closed singular configurations
formed by trajectories
of limiting slow and fast systems of \eqref{deq_switching}
has been derived in Piltz et al.~\cite{Piltz:2017}.
In Section~\ref{sec_switching},
using our criterion (Theorem~\ref{thm_main1})
we prove that there is a locally unique closed singular configuration
that admits periodic orbits (see Figure~\ref{fig_switching}(a)).
Moreover,
with parameters adapted from that paper,
by computing the linearization of the singular transition maps
we prove that
the periodic orbits are orbitally unstable (see Figure~\ref{fig_switching}(b))
for all small $\epsilon>0$.

\begin{figure}[htbp]
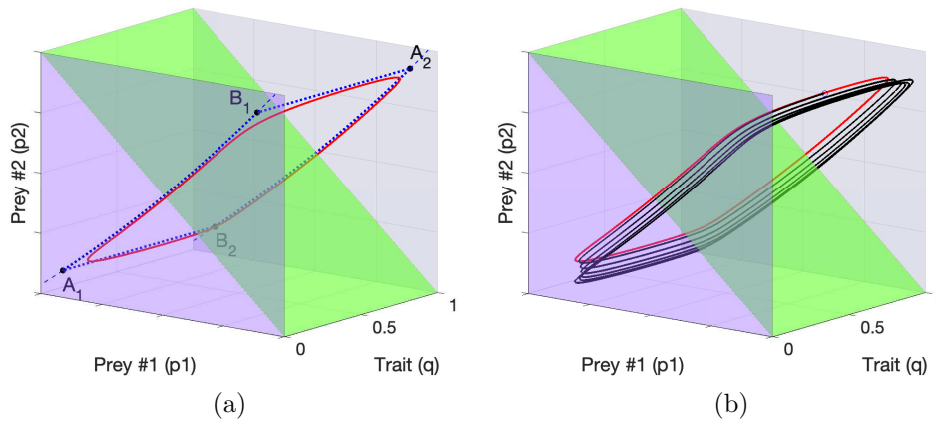

\begin{center}
{\,\includegraphics[trim =  1.5cm 0cm 1cm 0cm, clip, width=.48\textwidth]%
{fig_switching_sing}}\,
{\,\includegraphics[trim =  1.5cm 0cm 1cm 0cm, clip, width=.48\textwidth]%
{fig_switching_nearby}}
\\
(a)
\hspace{17em}
(b)
\end{center}
\caption{(a)
A periodic orbit for \eqref{deq_switching}
(red solid curve)
with $\epsilon=0.01$
is close to the singular configuration
(blue dotted curve)
with vertices $A_i$ and $B_i$.
(b) 
A trajectory for \eqref{deq_switching}
with $\epsilon=0.01$
and initial value (black open circle) close to the periodic orbit
leaves the vicinity of the periodic orbit
as time evolves,
which suggests that the periodic orbit is unstable.
}
\label{fig_switching}
\end{figure}

In Section~\ref{sec_planar}, we consider the planar system
studied by Hsu and Wolkowicz~\cite{Hsu:2019criterion}:
\begin{equation}\label{deq_ab}\begin{aligned}
  &\frac{d}{dt}a= \epsilon F(a,b,\epsilon)+ b\,H(a,b,\epsilon),
  \;\;
  &\frac{d}{dt}b= b\, G(a,b,\epsilon).
\end{aligned}\end{equation}
The $a$-axis is a critical manifold 
for the limiting fast system of \eqref{deq_ab}.
For singular closed orbits for this system,
a criterion of the existence and stability
of corresponding relaxation oscillations
was derived in Hsu and Wolkowicz~\cite{Hsu:2019criterion},
which generalizes the criterion in Hsu~\cite{Hsu:2019siads}.
Using our results (Theorem~\ref{thm_main3}),
we provide an alternative proof of that result.
The derivations in those papers
were based on the asymptotic expansion of Floquet exponents
for system \eqref{deq_ab}
with $\epsilon>0$.
In the present paper,
we analyze the transition maps
for the limiting slow and fast systems with $\epsilon=0$ directly,
which provides a better understanding
of the slow-fast feature in the system.

The rapid evolution model,
i.e., system \eqref{deq_CoEvol}
with $0<\epsilon\ll 1$,
has been studied by
Cortez \cite{Cortez:2011,Cortez:2015,Cortez:2016AmerNatur,Cortez:2018},
Cortez and Ellner \cite{Cortez:2010},
Cortez and Patel \cite{Cortez:2017},
Cortez and Weitz \cite{Cortez:2014-PNAS},
and Haney and Siepielski \cite{Haney:2018}.
System \eqref{deq_CoEvol}
with slow evolution,
i.e.\ $\epsilon\gg 1$,
has been studied by
Khibnik and Kondrashov \cite{Khibnik:1997},
Shen, Hsu, and Yang \cite{Shen:2019}.
Transient behaviors,
which are related to regime shifts in ecological systems,
have been studied by
Hastings \cite{Hastings:2004},
Wysham and Hastings \cite{Wysham:2008},
and Hastings et al.\ \cite{Hastings:2018-Science}.
Model \eqref{deq_switching}
is a continuous version of
the piecewise-smooth model in
Piltz, Porter and Maini~\cite{Piltz:2014}.
A comparison of the numerical solutions
of \eqref{deq_switching}
with real data was given in 
Piltz, Veerman and Maini~\cite{Piltz:2018}.

Relaxation oscillations for systems with turning points
have been studied by
Szmolyan and Wechselberger \cite{Szmolyan:2004},
Liu, Xiao and Yi \cite{Liu:2003}.
Our work is complementary to those results
since our singular orbit 
is away from fold points
(i.e.\ singular points of the slow flow).
Our result is a generalization
of the criterion of relaxation oscillations
given by
Li et al.~\cite{Li:2016},
Hsu \cite{Hsu:2019siads},
and Hsu and Wolkowicz \cite{Hsu:2019criterion}.
Relaxation oscillations
in predator-prey systems
have been studied by
various researchers,
including
Ghazaryan, Manukian and Schecter~\cite{Ghazaryan:2015},
Hsu and Shi~\cite{Hsu:2009},
Huzak~\cite{Huzak:2018},
Li and Zhu~\cite{Li:2013}
Rinaldi and Muratori~\cite{Rinaldi:1992},
and Shen, Hsu and Yang~\cite{Shen:2019}.
Relaxation oscillations in multi-dimensional slow-fast systems 
without turning points
have been studied by Soto-Trevi\~{n}o \cite{Soto:2001}.
Boundary value problems for slow-fast systems
have been studied by
Lin \cite{Lin:1989}
and Tin, Kopell and Jones \cite{Tin:1994}.

The entry-exit function can be
traced back to Benoit~\cite{Benoit:1981},
and is called the {\em way-in way-out function}
in Diener~\cite{Diener:1984}.
This phenomenon
that the landing and jumping points satisfy
the entry-exit function
has been called
{\em bifurcation delay} in Beno\^{i}t~\cite{Benoit:1991},
{\em Pontryagin delay} in Mishchenko et al.~\cite{Mishchenko:1994},
and {\em delay of instability} in Liu~\cite{Liu:2000}.

The proof of our criterion
is a generalization of the method in Hsu \cite{Hsu:2016,Hsu:2017},
which is a variation of the classical blow-up method.
The blow-up method
was developed by 
Dumortier and Roussarie
\cite{Dumortier:1996}
and Krupa and Szmolyan \cite{Krupa:2001-SIMA,Krupa:2001-JDE},
and has been applied extensively
to study various problems, 
including
Gasser, Szmolyan and W\"{a}chtler~\cite{Gasser:2016},
Iuorio, Popovi\'{c} and Szmolyan~\cite{Iuorio:2019},
Kosiuk and Szmolyan~\cite{Kosiuk:2011},
Manukian and Schecter~\cite{Manukian:2009},
Schecter~\cite{Schecter:2004-JDE},
and Schecter ans Szmolyan~\cite{Schecter:2004-JDDE}.

This paper is organized as follows.
In Section~\ref{sec_main},
we state our criteria 
for the existence and stability
of relaxation oscillations
and provide some computable formulas for the criteria.
Proofs of the criteria are given in Section~\ref{sec_proofs}.
In Section~\ref{sec_ex}
we apply our criteria to models described in Section~\ref{sec_intro}.

\section{Main Theorems}
\label{sec_main}
Assumptions needed for our main results
are stated in Section~\ref{sec_hyp}.
The criteria
for the existence of relaxation oscillations
are split into Sections~\ref{sec_main1}--\ref{sec_main3},
from single to multiple dimensional fast variables.
Formulas for computing quantities in the criteria
are given in Section~\ref{sec_formulas}.

\subsection{The Assumptions}
\label{sec_hyp}
Let $N$ be a fixed positive integer.
Throughout this paper
we adopt the notion that $A_i=A_{i+N}$
for any integer $i$
and any object $A$.
For any vector $z$ in $\mathbb R^m$,
we denote $z^{(j)}$ the $j$-th component of $z$.
We denote $\{{\sf e}_1,{\sf e}_2,\dots,{\sf e}_m\}$
the standard basis of $\mathbb R^m$.

\begin{assumption}
\label{hyp_cmin}
For each $j=1,2,\dots,m$,
there exist $-\infty\le \zmin^{(j)}< \zmax^{(j)}\le \infty$
such that for all sufficiently small $\epsilon\ge 0$,
\begin{equation*}
  h(p,z,\epsilon)=0
  \quad\text{and}\quad
  g^{(j)}(p,z,\epsilon)=0
\end{equation*}
whenever
$z^{(j)}=\zmin^{(j)}$ or $z=\zmax^{(j)}$.
\end{assumption}

\begin{assumption}
\label{hyp_fast}
For each $i=1,2,\dots,N$,
where $N$ is a positive integer,
there exist
$A_i,B_i\in \mathbb R^n$,
$J_i\in \{1,2,\dots,m\}$, \begin{equation*}
  z_i\in \{\zmin^{(1)},\zmax^{(1)}\}
  \times \{\zmin^{(2)},\zmax^{(2)}\}
  \times \cdots \times \{\zmin^{(m)},\zmax^{(m)}\}
  \quad
  \text{with}\;\; |z_i|<\infty,
\end{equation*}
and smooth functions
$\theta_i:\mathbb R\to \mathbb R^n$
and $\rho_i:\mathbb R\to \mathbb R$
such that
$\rho_i$ is non-constant
and the curve \begin{equation*}
  \gamma_i(t)
  =\big(\theta_i(t),z_i+\rho_i(t){\sf e}_{J_i}\big),\;\;
  -\infty<t<\infty,
\end{equation*}
is a heteroclinic orbit of \eqref{fast_pz}
that connects $(B_{i-1},z_{i-1})$ and $(A_{i},z_{i})$.
In additional,
for each $j=1,2,\dots,m$,
there exists $i\in \{1,2,\dots,N\}$ such that $J_i=j$.
\end{assumption}

The expression of the heteroclinic orbit 
in Assumption~\ref{hyp_fast}
implies that
$z_i$ differs from $z_{i+1}$
at no more than one component.
Note that we do not exclude the possibility that $z_i=z_{i+1}$.

The assumption of the existence of $i$
such that $J_i=j$
means that each component $z^{(j)}$ of $(p,z)$
must be non-constant along at least one $\gamma_i$.
If it is not the case,
then we can treat $z^{(j)}$ as a constant
and replace the equation of $\dot{z}^{(j)}$ in \eqref{deq_pz}
by $\dot{z}^{(j)}=0$
because the space 
$\{(p,z): z^{(j)}=\zmin^{(j)} \text{ or } \zmax^{(j)}\}$
is invariant under \eqref{deq_pz} by Assumption~\ref{hyp_cmin}.

We define ${M}_i= \{(p,z): p\in \mathbb R^n,z=z_i\}$
for $i=1,2,\dots,N$.
Then Assumption~\ref{hyp_cmin}
implies that ${M}_i$ is invariant under \eqref{deq_pz}
for all sufficiently small $\epsilon>0$.
The restriction of \eqref{deq_pz} on ${M}_i$ is \eqref{slow_pz}.
We denote the solution operator of \eqref{slow_pz}
by $\Phi_{i}$.

\begin{assumption}
\label{hyp_slow}
For each $i=1,2,\dots,n$,
$f_i(A_i,z_i,0)\ne 0$
and 
there exists
$\tau_i>0$
such that $\Phi_{i}(\tau_i,A_i)=B_i$.
\end{assumption}

Denote $\sigma_i=\Phi_{i}([0,\tau_i],A_i)\times \{z_i\}$.
Then by Assumptions~\ref{hyp_fast}--\ref{hyp_slow}
the configuration \eqref{config_N} forms a closed orbit.
The idea of GSPT
is that solutions of the full system
can potentially be obtained
by joining some trajectories of
its limiting systems.
The limiting systems \eqref{fast_pz} and \eqref{slow_pz}
provide a family of uncountably many loops.
Our goal is to establish a criterion
for the existence of a locally unique periodic orbit
near this closed singular orbit.

We impose the following non-degeneracy condition.
\begin{assumption}
\label{hyp_nondegen}
For $i=1,2,\dots,N$, \begin{equation*}
  \frac{\partial g^{(J_i)}}{\partial z^{(J_i)}}(A_i,z_i,0)< 0
  \quad\text{and}\quad
  \frac{\partial g^{(J_i)}}{\partial z^{(J_i)}}(B_i,z_i,0)> 0.
\end{equation*}
\end{assumption}

\begin{remark}
By Assumption~\ref{hyp_cmin},
the linearization 
of \eqref{fast_pz}
at any point $(p,z_i)$ in ${M}_i$
has the Jacobian matrix
\begin{equation*}
  \begin{pmatrix}
  0_{n\times n}& *
  \\[.5em]
  0_{m\times n}&
  \mathrm{diag}\left(
  \frac{\partial g^{(1)}}{\partial z^{(1)}},
  \dots,
  \frac{\partial g^{(m)}}{\partial z^{(m)}}
  \right)
  \end{pmatrix}
\end{equation*}
where
the partial derivatives are evaluated at $(p,z_i,0)$.
In the case that $m=1$,
the inequalities in Assumption~\ref{hyp_nondegen}
imply that ${M}_i$
is normally hyperbolic at $(A_i,z_i)$ and $(B_i,z_i)$
and that there is a turning point on $M_i$
between these two points.
\end{remark}

In the case that $m=1$,
where $z$ and $g$ are scalar,
the classical entry-exit relation for \eqref{deq_pz}
between $A_i$ and $B_i$
can be expressed by \begin{equation}\label{int_ABm1}
  \int_0^{s}\frac{\partial g}{\partial z}\big(
  \Phi_{i}(\tau,A_i),z_{i},0
  \big)\;d\tau
  \;\begin{cases}
    =0,&\text{if $s=\tau_i$},
    \\
    < 0,&\text{if $0<s<\tau_i$}.
  \end{cases}
\end{equation}
Assuming \eqref{int_ABm1} and Assumption~\ref{hyp_nondegen},
on some neighborhood $\A_i$ of $A_i$ in $\mathbb R^n$
we can implicitly define $T_i:\A_i\to (0,\infty)$ by $T_i(A_i)=\tau_i$
and \begin{equation}\label{def_Tim1}
  \int_0^{T_i(p)}\frac{\partial g}{\partial z}\big(
  \Phi_{i}(\tau,p),z_{i},0
  \big)\;d\tau=0.
\end{equation}
The entry-exit function is then defined by \begin{equation}\label{def_Qim1}
  Q_i(p)=\Phi_i(T_i(p),p).
\end{equation}
Each pair of points $(p,z_i)$ and $(Q_i(p),z_i)$, where $p\in \A_i$,
is a pair of landing and jumping points on $M_i$.

For the general case that $m\ge 1$,
we first introduce some notations.
Let $J_i$, where $i=1,2,\dots,N$,
be the numbers defined in Assumption~\ref{hyp_cmin}.
For each $j=1,2,\dots,m$,
let \begin{equation*}
  I_j=\min\{i\in \{1,2,\dots,N\}: J_i=j\}.
\end{equation*}
This means that $I_j$ is the smallest positive $i$
for which the value of $z^{(j)}$
changes
along the trajectory 
$\gamma_i$.
By Assumption~\ref{hyp_fast},
$I_j$ is well-defined and is finite.
We define \begin{equation*}
  \zeta_0^{(j)}
  = -\sum_{k=1}^{I_j}\left(
  \int_0^{\tau_k}
  \frac{\partial g^{(j)}}{\partial z^{(j)}}\big(
  \Phi_{i}(\tau,A_k),z_{k},0
  \big)
  \;d\tau
  \right)
\end{equation*}
and \begin{equation*}
  \zeta_i^{(j)}
  = \zeta_0^{(j)}
  + \sum_{k=1}^{i}\left(
  \int_0^{\tau_k}
  \frac{\partial g^{(j)}}{\partial z^{(j)}}\big(
  \Phi_{i}(\tau,A_k),z_{k},0
  \big)
  \;d\tau
  \right)
\end{equation*}
for $i=1,2,\dots,N$ and $j=1,2,\dots,m$.
Also we denote $\zeta_i=(\zeta_1^{(1)},\dots,\zeta_1^{(m)})$.
The following assumption is a generalization of \eqref{int_ABm1}.

\begin{assumption}
\label{hyp_int}
For each $i\in \{1,2,\dots,N\}$, $j\in \{1,2,\dots,m\}$ and $s\in (0,\tau_i]$,
\begin{equation*}
  \zeta_i^{(j)}
  + \int_0^s
  \frac{\partial g^{(j)}}{\partial z^{(j)}}\big(
    \Phi_{i}(\tau,A_i),z_{i},0
  \big)
  \;d\tau
  \;\begin{cases}
    =0,&\text{if\; $j=J_i$ and $s=\tau_i$},
    \\
    \ne 0,&\text{otherwise}.
  \end{cases}
\end{equation*}
\end{assumption}

For each $i=1,2,\dots,N$,
we consider the system \begin{equation}\label{slow_pzeta}\begin{aligned}
  &\frac{d}{d\tau}p
  = f(p,z_i,0),
  \\
  &\frac{d}{d\tau}\zeta^{(j)}
  =\frac{\partial g^{(j)}}{\partial z^{(j)}}(p,z_i,0),
  \quad j=1,2,\dots,m.
\end{aligned}\end{equation}
Let \begin{equation}\label{def_Lambda}
  \Lambda_i
  =\left\{
    \zeta\in \mathbb R^m:
    \big|\zeta-\zeta_i\big|< \delta,\;
    \zeta^{(J_i)}= \zeta_i^{(J_i)}
  \right\},
\end{equation} where $\delta>0$ .
Let $\widehat{\Phi}_i$ be the solution operator for \eqref{slow_pzeta}.
From Assumption~\ref{hyp_nondegen},
by shrinking $\A_i$ and $\delta$ if necessary,
we can define $\widehat{T}_i(p,\zeta)$
on $\A_i\times \Lambda_i$ 
implicitly by $\widehat{T}_i(A_i,\zeta_i)=0$
and \begin{equation}\label{def_TiHat}
  \zeta^{(J_i)}
  + \int_0^{\widehat{T}_i(p,\zeta)}
  \frac{\partial g^{(J_i)}}{\partial z^{(J_i)}}
  (\Phi_i(\tau,p),z_i,0)\;d\tau
  =0.
\end{equation}
Finally, we define 
the generalized entry-exit function
$\widehat{Q}_i(p,\zeta)$ on $\A_i\times \Lambda_i$
by \begin{equation}\label{def_QiHat}
  \widehat{Q}_i(p,\zeta)
  = \widehat{\Phi}_i((p,\zeta),\widehat{T}_i(p,\zeta)\big).
\end{equation} 
Note that $\widehat{T}_i(p,\zeta_i)=T_i(p)$
and therefore $\widehat{Q}_i(p,\zeta_i)= (Q_i(p),\zeta_{i+1})$
for all $p\in \A_i$.
In particular, $\widehat{Q}_i(A_i,\zeta_i)= (B_i,\zeta_{i+1})$.

\begin{remark}
In the case that $m=1$,
we have
$\zeta_i^{(j)}=0$ for all $i$ and $j$,
so Assumption~\ref{hyp_int}
is reduced to the classical entry-exit relation \eqref{int_ABm1},
and $\widehat{Q}_i$ defined by \eqref{def_TiHat}--\eqref{def_QiHat}
coincides with $Q_i$ defined by \eqref{def_Tim1}--\eqref{def_Qim1}.
\end{remark}

\subsection{
Systems with a Single and Simple Fast Variable}
\label{sec_main1}
First we state our results for system \eqref{deq_pz}
with $m=1$ and $h=0$,
which can be applied to models \eqref{deq_tradeoff} and \eqref{deq_switching}.
These restrictions mean that
the system has a single variable
and that
the slow variable is steady in the fast system \eqref{fast_pz}.

Since the slow variable is steady in the fast system \eqref{fast_pz}
in the case that $h=0$,
the function $\theta_i$ in Assumption~\ref{hyp_fast}
is constant for each $i=1,2,\dots,N$.
Hence $B_{i}=A_{i+1}$ for each $i$.
Since $Q_i(A_i)=B_i$,
it follows that $Q_i(A_i)=A_{i+1}$.
Let \begin{equation}\label{def_P}
  P=
  Q_{N}\circ \cdots\circ Q_2\circ Q_1.
\end{equation}
Then $P(A_1)=A_1$
and $P$ maps a neighborhood
of $A_1$ in $\A_1$
into $\A_1$.

\begin{theorem}\label{thm_main1}
Suppose that Assumptions~\ref{hyp_cmin}--\ref{hyp_int}
hold for system \eqref{deq_pz} with $m=1$ and $h=0$.
Let $P$ be defined by \eqref{def_P}.
If \begin{equation}\notag
  \det(DP(A_1)-\mathrm{I}_n)\ne 0,
\end{equation}
where $\mathrm{I}_n$ is the identify matrix of rank $n$,
then the configuration \eqref{config_N}
admits a relaxation oscillation.
Furthermore,
the corresponding periodic orbits
are orbitally asymptotically stable
if the spectrum radius of $DP(A_1)$ is less than one
and orbitally unstable
if the spectrum radius of $DP(A_1)$ is greater than one.
\end{theorem}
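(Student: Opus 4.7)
The plan is to construct a Poincaré return map $P_\epsilon$ for system~\eqref{deq_pz} on a cross-section transverse to the singular loop, show that $P_\epsilon\to P$ in $C^1$ as $\epsilon\to 0^+$, and then invoke the implicit function theorem together with the nondegeneracy hypothesis $\det(DP(A_1)-\mathrm{I}_n)\ne 0$ to produce a unique fixed point $p_\epsilon\to A_1$ of $P_\epsilon$, whose trajectory under~\eqref{deq_pz} is the desired relaxation oscillation. Orbital stability will then follow from continuity of the spectrum of $DP_\epsilon(p_\epsilon)$.

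To decompose $P_\epsilon$, I would place, for each $i$, two cross-sections $\Sigma_i^{\IN}$ and $\Sigma_i^{\OUT}$ at fixed small signed distances from $M_i$ in the normal $z^{(J_i)}$-direction, passing near $A_i$ and $B_i$ respectively. Since $h\equiv 0$, the fast flow satisfies $\dot p=0$ and each heteroclinic $\gamma_i$ is a pure jump in $z$ from $B_{i-1}$ to $A_i$ along constant $p$ (so $B_{i-1}=A_i$ as points in $\mathbb{R}^n$). Assumption~\ref{hyp_nondegen} makes $M_{i-1}$ near $B_{i-1}$ a normally hyperbolic repeller and $M_i$ near $A_i$ a normally hyperbolic attractor for the fast flow, so the fast transition $\Pi_i^{\text{fast}}:\Sigma_{i-1}^{\OUT}\to\Sigma_i^{\IN}$ is supplied by Fenichel's theorem and the $\lambda$-lemma and converges in $C^1$ to the natural identification in the $p$-coordinate as $\epsilon\to 0$.

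The central difficulty is the slow transition $\Pi_i^{\text{slow}}:\Sigma_i^{\IN}\to\Sigma_i^{\OUT}$, which must reproduce the entry-exit map $Q_i$ of~\eqref{def_Qim1} in the limit. A trajectory entering $\Sigma_i^{\IN}$ near $(p,z_i)$ is exponentially attracted to $M_i$ on the arc where $\partial_z g<0$, drifts along the slow flow $\Phi_i(\tau,p)$, and only peels off from $M_i$ once the integrated transverse rate $\int_0^\tau\partial_z g(\Phi_i(s,p),z_i,0)\,ds$ returns to zero, which is precisely~\eqref{def_Tim1}. A rigorous derivation would adapt the blow-up argument of Hsu~\cite{Hsu:2016,Hsu:2017}: rectify $M_i$ by a smooth coordinate change, track the logarithm of the normal amplitude (which to leading order evolves at rate $\epsilon^{-1}\partial_z g$ along the slow flow), and control the error in the exit time by $O(\epsilon\log\epsilon)$. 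Since Assumption~\ref{hyp_slow} keeps $\Phi_i$ nonsingular on $\sigma_i$, smooth dependence of $T_i(p)$ and $\Phi_i(T_i(p),p)$ on $p\in\A_i$ transfers to $C^1$ dependence of $\Pi_i^{\text{slow}}$, giving $\Pi_i^{\text{slow}}\to Q_i$ in $C^1$. The extension from scalar $p$ to vector-valued $p$ enters only by replacing scalar derivatives with Jacobians; nonetheless, obtaining $C^1$ convergence uniformly in $\epsilon$ despite the logarithmic singularity at the turning point is the main technical hurdle.

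Composing the transition maps gives $P_\epsilon\to P$ in $C^1$ on a neighborhood of $A_1$. Writing $F_\epsilon(p)=P_\epsilon(p)-p$, the hypothesis makes $D_pF_0(A_1)$ invertible, so the implicit function theorem produces a smooth family of fixed points $p_\epsilon\to A_1$ of $P_\epsilon$; the corresponding trajectory stays $O(\epsilon)$-close to each $\sigma_i$ and $O(1)$-close to each $\gamma_i$ and is by definition a relaxation oscillation. Finally, since $DP_\epsilon(p_\epsilon)\to DP(A_1)$ in operator norm, continuity of the spectrum ensures that the spectrum of $DP_\epsilon(p_\epsilon)$ lies strictly inside the unit disk for small $\epsilon$ when the spectral radius of $DP(A_1)$ is less than one, giving orbital asymptotic stability, whereas an eigenvalue of modulus greater than one is retained when the spectral radius exceeds one, giving instability.
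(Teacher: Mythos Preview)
Your proposal is correct and follows essentially the same strategy as the paper: build a Poincar\'e map $P_\epsilon$ on transversal sections, show it converges in $C^1$ to the singular map $P$ via the logarithmic normal-amplitude variable $\zeta=\epsilon\ln(\omega_i/(z-z_i))$ (your ``logarithm of the normal amplitude''), and conclude by the implicit function theorem and spectral continuity. The paper's only substantive additions are a slightly different section layout (sections $\Gamma_i$ on the fast orbits and $\widehat\Sigma_i$ midway on the slow arcs, then time reversal for the exit side) and an explicit Exchange-Lemma statement (Lemma~\ref{lem_EL}) to control the corner passage, which yields an $O(\epsilon)$ rather than $O(\epsilon\log\epsilon)$ error; your invocation of Fenichel and the $\lambda$-lemma for the fast jump is harmless but unnecessary, since that transition is finite-time and handled by regular perturbation.
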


The proof of the theorem
is shown in Section~\ref{sec_proof_main1}.

\subsection{Systems with Simple Fast Dynamics}
\label{sec_main2}
System \eqref{deq_pz}
with $m\ge 1$ and $h=0$
can be applied to \eqref{deq_CoEvol}.
For this case,
we introduce the following definitions.

Under the assumption that $h=0$,
we have $B_{i}=A_{i+1}$.
Since $\widehat{Q}_i(A_i,\zeta_i)= (B_i,\zeta_{i+1})$,
it follows that $\widehat{Q}_i(A_i,\zeta_i)=(A_{i+1},\zeta_{i+1})$.
Let \begin{equation}\label{def_Phat}
  \widehat{P}
  =\widehat{Q}_{N}
  \circ \cdots 
  \circ \widehat{Q}_2
  \circ \widehat{Q}_1.
\end{equation}
Then $\widehat{P}(A_1,\zeta_1)=(A_1,\zeta_1)$
and $\widehat{P}$
maps a neighborhood 
of $(A_1,\zeta_1)$ in $\A_1\times \Lambda_1$
into $\A_1\times \Lambda_1$.

\begin{theorem}\label{thm_main2}
Suppose that Assumptions~\ref{hyp_cmin}--\ref{hyp_int} hold
for system \eqref{deq_pz} with $h=0$.
Let $\widehat{P}$ be defined by \eqref{def_Phat}.
If \begin{equation}\notag
  \det(D\widehat{P}(A_1,\zeta_1)-\mathrm{I}_{n+m-1})\ne 0,
\end{equation}
where $D\widehat{P}$
is the Jacobian matrix with respect to the standard coordinate
of $\A_1\times \Lambda_1$,
then the configuration \eqref{config_N}
admits a relaxation oscillation.
Furthermore,
the corresponding periodic orbits
are orbitally asymptotically stable
if the spectrum radius of $D\widehat{P}(A_1,\zeta_1)$ is less than one
and orbitally unstable
if the spectrum radius of $D\widehat{P}(A_1,\zeta_1)$ is greater than one.
\end{theorem}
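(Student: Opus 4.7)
The plan is to follow the structure of the proof of Theorem~\ref{thm_main1} given in Section~\ref{sec_proof_main1}, but with the base variable enlarged from $p\in \mathbb{R}^n$ to the pair $(p,\zeta)\in \mathbb{R}^n\times \mathbb{R}^m$ and with the scalar entry-exit function $Q_i$ replaced by the multi-dimensional version $\widehat{Q}_i$ defined in \eqref{def_QiHat}. The central observation is that on each slow manifold $M_i$, only a single fast direction $z^{(J_i)}$ is activated between the landing point $A_i$ and the jumping point $B_i$, while the remaining $m-1$ components are merely accumulated as ``spectator'' exponents; carrying those accumulated exponents as additional slow data reduces the analysis on each $\sigma_i$ locally to the one-dimensional turning-point situation already treated in Section~\ref{sec_proof_main1}.

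Concretely, I would first install, for each $i$, two cross-sections $\Sigma_i^{\IN}$ and $\Sigma_i^{\OUT}$ transverse to the singular orbit on either side of $\sigma_i$, lying at a fixed small distance $\delta_0$ from $M_i$ in the $z^{(J_i)}$-direction and parameterized by $(p,\zeta)$ modulo the one coordinate fixed by the level-set condition in \eqref{def_Lambda}. Next I would analyze the slow transition $\Pi_i^{\mathrm{slow}}:\Sigma_i^{\IN}\to \Sigma_i^{\OUT}$ by a blow-up of the turning region on $M_i$, tracking not only the exponential contraction and expansion in the $z^{(J_i)}$-direction (which pins down the exit time via \eqref{def_TiHat}) but also the regular evolution of $\zeta^{(j)}$ for $j\ne J_i$ and of $p$ along the slow flow $\Phi_i$. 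By Assumption~\ref{hyp_int}, the exit time converges to $\widehat{T}_i(p,\zeta)$ and $\Pi_i^{\mathrm{slow}}$ converges to $\widehat{Q}_i$ in $C^1$ on compact subsets of $\A_i\times \Lambda_i$ as $\epsilon\to 0$. The fast transition $\Pi_i^{\mathrm{fast}}:\Sigma_{i-1}^{\OUT}\to \Sigma_i^{\IN}$ is, under the hypothesis $h=0$, a standard regular perturbation of the heteroclinic $\gamma_i$ on which $p$ and the $z^{(j)}$ with $j\ne J_i$ are exactly constant, so it tends to the identity in the $(p,\zeta)$ coordinates as $\epsilon\to 0$.

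With these pieces in place, the Poincar\'e return map $\mathcal{P}_\epsilon$ on $\Sigma_1^{\IN}$ obtained by composing the $2N$ transition maps converges in $C^1$ on a neighborhood of $(A_1,\zeta_1)$ to $\widehat{P}$ defined by \eqref{def_Phat}. Since $(A_1,\zeta_1)$ is a fixed point of $\widehat{P}$ and $D\widehat{P}(A_1,\zeta_1)-\mathrm{I}_{n+m-1}$ is invertible by hypothesis, an application of the implicit function theorem to $\mathcal{P}_\epsilon(x)-x=0$ produces a unique fixed point $x_\epsilon$ within $O(\epsilon)$ of $(A_1,\zeta_1)$ for all small $\epsilon>0$, and this fixed point corresponds to the desired relaxation oscillation. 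The linearization $D\mathcal{P}_\epsilon(x_\epsilon)$ is $O(\epsilon)$-close to $D\widehat{P}(A_1,\zeta_1)$, so its spectral radius is less than (resp.\ greater than) one under the stated hypothesis, yielding orbital asymptotic stability (resp.\ instability).

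The main technical obstacle is the passage of the spectator components $\zeta^{(j)}$ with $j\ne J_i$ through the turning region on $M_i$: normal hyperbolicity of $M_i$ in the $z^{(j)}$-direction persists there (either attracting or repelling, according to the sign of $\partial g^{(j)}/\partial z^{(j)}$ guaranteed by Assumption~\ref{hyp_int}), but the blow-up chart that resolves the loss of hyperbolicity in the $z^{(J_i)}$-direction must be constructed so that the remaining hyperbolic directions are controlled uniformly and the resulting transition map retains $C^1$ dependence on $(p,\zeta)$. This bookkeeping, essentially decoupling the active turning direction $J_i$ from the spectator hyperbolic directions while carrying both through the blow-up charts developed in Hsu~\cite{Hsu:2016,Hsu:2017}, is the heart of the generalization from Theorem~\ref{thm_main1} to Theorem~\ref{thm_main2}.
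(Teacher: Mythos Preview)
Your proposal is correct and follows essentially the same approach as the paper. The paper obtains Theorem~\ref{thm_main2} as the special case $h=0$ of Theorem~\ref{thm_main3}, whose proof in Section~\ref{sec_proof_main3} is precisely the generalization of Section~\ref{sec_proof_main1} you describe: the spectator directions $z^{(j)}$, $j\ne J_i$, are carried through the turning region via logarithmic charts $\zeta^{(j)}=\epsilon\ln\big(\omega_i^{(j)}/(z^{(j)}-z_i^{(j)})\big)$, the Exchange Lemma (Lemma~\ref{lem_EL}) is applied with the enlarged slow variable $(p,\zeta)$, and the fast transition reduces to the identity in $(p,\zeta)$ when $h=0$, so the limiting return map is exactly $\widehat{P}$.
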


Theorem~\ref{thm_main2} 
is resulted from a more general theorem,
Theorem~\ref{thm_main3},
stated below.

\subsection{Systems with Multiple Slow and Fast Variables}
\label{sec_main3}
Now we consider 
system \eqref{deq_pz}
with general $h$
for treating system \eqref{deq_ab}.

For $i=1,2,\dots,N$ and $j=1,2,\dots,m$,
let \begin{equation}\label{def_omega}\begin{aligned}
  \omega_i^{(j)}
  =\begin{cases}
    1,
    &\text{if }\; z_{i}^{(j)}= \zmin^{(j)},
    \\[.5em]
    -1,
    &\text{if }\; z_{i}^{(j)}= \zmax^{(j)}.
  \end{cases}
\end{aligned}\end{equation}
Let \begin{equation*}\begin{aligned}
  \phi_{i}(q)
  = \begin{cases}
    \displaystyle
    \frac{\omega_i^{(J_i)}}{q-z_i^{(J_i)}},
    &\text{if } z_{i-1}^{(J_i)}= z_{i-1}^{(J_i)},
    \\[1.2em]
    \displaystyle
    \frac{\omega_i^{(J_i)}}{q-z_i^{(J_i)}}\;
    \frac{\omega_{i-1}^{(J_{i})}}{q-z_{i-1}^{(J_{i})}},
    &\text{if } z_i^{(J_i)}\ne z_{i-1}^{(J_i)}.
  \end{cases}
\end{aligned}\end{equation*}
Note that $\phi_i(z^{(J_i)})>0$
for all $(p,z)$ on $\gamma_i$.

Define functions $g_i$ and $h_i$
of $(p,q)\in \mathbb R^N\times \mathbb R$
by
\begin{equation}\label{def_gi}
  (g_i,h_i)(p,q)
  =\phi_i(q)\,
  (g^{(J_i)},h)(p,z_{i-1}+q{\sf e}_{J_i},0)
  \quad
  \text{for }q\ne z_i^{(J_i)},z_{i-1}^{(J_{i})}.
\end{equation}
By Assumptions~\ref{hyp_cmin},
$(g_i,h_i)$ can be continuously extended
at those singularities.
We identify $(g_i,h_i)$ 
with its continuous extension.
Thus $g_i(B_{i-1},z_{i-1}^{(J_i)})$
and $g_i(A_i,z_i^{(J_i)})$
are multiples of 
$\frac{\partial g^{(J_i)}}{\partial z^{(J_i)}}(B_{i-1},z_{i-1},0)$
and
$\frac{\partial g^{(J_i)}}{\partial z^{(J_i)}}(A_i,z_i,0)$,
respectively,
by nonzero constants.
By Assumption~\ref{hyp_nondegen},
it follows that
$g_i(B_{i-1},z_{i-1}^{(J_{i})})\ne 0$
and $g_i(A_i,z_i^{(J_i)})\ne 0$.

Note that the functions $\theta_i$ and $\rho_i$
in Assumption~\ref{hyp_fast}
satisfy that $\{(\theta_i,\rho_i)(t)): t\in \mathbb R\}$
is a trajectory of the system \begin{equation}\label{fast_pq}
  \dot{p}= h_i(p,q),
  \quad
  \dot{q}= g_i(p,q),
\end{equation}
that connects $(B_{i-1},z_{i-1}^{(J_{i})})$ and $(A_i,z_i^{(J_i)})$.
Since $g_i(B_{i-1},z_{i-1}^{(J_{i})})\ne 0$
and $g_i(A_i,z_i^{(J_i)})\ne 0$,
there exists a neighborhood
$\B_{i-1}$ of $B_{i-1}$
such that we can define $\pi_i:\B_{i-1}\to \A_i$
implicitly by that \begin{equation}\label{def_pi}
  \text{$\big(p,z_{i-1}^{(J_{i})}\big)$
  and
  $\big(\pi_i(p),z_i^{(J_{i})}\big)$
  are connected by a trajectory of \eqref{fast_pq}.
  }
\end{equation}

Let $\pi_i\times \mathrm{id}$
be the map from
$\B_{i-1}\times \Lambda_{i}$
to
$\A_i\times \Lambda_{i}$
given by $(\pi_i\times \mathrm{id})(p,\zeta)=(\pi_i(p),\zeta)$.
Define \begin{equation}\label{def_Ptilde}
  \widetilde{P}
  =
  (\pi_N\times \mathrm{id})
  \circ \widehat{Q}_{N}
  \circ (\pi_N\times \mathrm{id})
  \circ \cdots 
  \circ \widehat{Q}_2
  \circ (\pi_2\times \mathrm{id})
  \circ \widehat{Q}_1.
\end{equation}

\begin{theorem}\label{thm_main3}
Suppose that Assumptions~\ref{hyp_cmin}--\ref{hyp_int} hold
for system \eqref{deq_pz}.
Let $\widetilde{P}$ be defined by \eqref{def_Ptilde}.
If \begin{equation}\notag
  \det(D\widetilde{P}(A_1,\zeta_1)-\mathrm{I}_{n+m-1})\ne 0,
\end{equation}
where $D\widetilde{P}$
is the Jacobian matrix with respect to the standard coordinate
of $\A_1\times \Lambda_1$,
then the configuration \eqref{config_N}
admits a relaxation oscillation.
Furthermore,
the corresponding periodic orbits
are orbitally asymptotically stable
if the spectrum radius of $D\widetilde{P}(A_1,\zeta_1)$ is less than one
and orbitally unstable
if the spectrum radius of $D\widetilde{P}(A_1,\zeta_1)$ is greater than one.
\end{theorem}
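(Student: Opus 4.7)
The plan is to construct a Poincar\'e return map $\mathcal{P}^\epsilon$ of system \eqref{deq_pz} in a tubular neighborhood of the singular loop \eqref{config_N}, extended by the auxiliary $\zeta$-coordinates of \eqref{slow_pzeta}, and to identify $\widetilde{P}$ as its singular limit, so that the hypothesis $\det(D\widetilde{P}(A_1,\zeta_1)-\mathrm{I}_{n+m-1})\ne 0$ gives a hyperbolic fixed point of the limit map whose persistence under regular/singular perturbation provides the sought relaxation oscillation. First I would introduce transversal cross-sections $\Sigma_i^{\IN}\subset \A_i\times \Lambda_i$ near each landing point $(A_i,\zeta_i)$ and $\Sigma_i^{\OUT}$ near each jumping point $(B_i,\zeta_{i+1})$, and factor the return map from $\Sigma_1^{\IN}$ to itself as
\begin{equation*}
  \mathcal{P}^\epsilon
  = \Pi_N^\epsilon\circ \Psi_N^\epsilon\circ \Pi_{N-1}^\epsilon\circ \cdots \circ \Pi_2^\epsilon\circ \Psi_1^\epsilon,
\end{equation*}
where $\Psi_i^\epsilon:\Sigma_i^{\IN}\to \Sigma_i^{\OUT}$ is the slow passage alongside $M_i$ and $\Pi_i^\epsilon:\Sigma_{i-1}^{\OUT}\to \Sigma_i^{\IN}$ is the fast heteroclinic excursion modelled on $\gamma_i$.

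For the fast transitions $\Pi_i^\epsilon$, regular perturbation theory suffices. The limiting fast system \eqref{fast_pz}, after the rescaling $\phi_i$ used in \eqref{def_gi}, becomes \eqref{fast_pq} and has a transverse heteroclinic from $(B_{i-1},z_{i-1}^{(J_i)})$ to $(A_i,z_i^{(J_i)})$ by Assumption~\ref{hyp_fast}, with transversality at both endpoints guaranteed by $g_i(B_{i-1},z_{i-1}^{(J_i)})\ne 0$ and $g_i(A_i,z_i^{(J_i)})\ne 0$ (which follow from Assumption~\ref{hyp_nondegen}). Hence $\pi_i$ from \eqref{def_pi} is well defined and $\Pi_i^\epsilon\to \pi_i\times \mathrm{id}$ in $C^1$ as $\epsilon\to 0$, since $\dot{\zeta}=O(\epsilon)$ and the fast time spent on $\gamma_i$ is $O(1)$.

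The heart of the argument is the analysis of the slow passages $\Psi_i^\epsilon$, i.e.\ the multi-dimensional entry-exit step. In Fenichel-type coordinates adapted to $M_i$, the normal dynamics linearize as $\dot z^{(j)}=\mu_{i,j}(p,\tau)z^{(j)}+O(\epsilon+|z-z_i|^2)$ with $\mu_{i,j}=\partial g^{(j)}/\partial z^{(j)}(\Phi_i(\tau,p),z_i,0)$, while $p$ is slow. Augmenting by the integrators $\zeta^{(j)}$ of \eqref{slow_pzeta} converts the problem of predicting where the trajectory peels off $M_i$ into an autonomous geometric problem on $\A_i\times \Lambda_i$: the trajectory leaves along the $J_i$-direction when its coordinate $\zeta^{(J_i)}$ crosses zero, which is exactly the implicit condition \eqref{def_TiHat} defining $\widehat{T}_i$. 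Adapting the blow-up argument of Hsu~\cite{Hsu:2016,Hsu:2017} to the multi-dimensional normal bundle — performing a cylindrical blow-up in the $J_i$-normal direction at the entry and exit faces while treating the remaining normal directions as regular perturbations — I would show that $\Psi_i^\epsilon\to \widehat{Q}_i$ in $C^1$ on a uniform neighborhood of $(A_i,\zeta_i)$. Assumption~\ref{hyp_int} is precisely what ensures that no component $\zeta^{(j)}$ with $j\ne J_i$ crosses zero before $\zeta^{(J_i)}$ does, so that the exit direction is consistently $J_i$ and $\widehat{Q}_i$ is single-valued.

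Combining these $C^1$-limits along the cycle gives $\mathcal{P}^\epsilon\to \widetilde{P}$ in $C^1$ on a neighborhood of $(A_1,\zeta_1)$ as $\epsilon\to 0$. Because $(A_1,\zeta_1)$ is a fixed point of $\widetilde{P}$ and the hypothesis $\det(D\widetilde{P}(A_1,\zeta_1)-\mathrm{I}_{n+m-1})\ne 0$ makes it hyperbolic, the implicit function theorem applied to $\mathcal{P}^\epsilon(x)-x=0$ yields a unique fixed point $x(\epsilon)$ for all small $\epsilon>0$, corresponding to a periodic orbit of \eqref{deq_pz} near the singular configuration; the separation of time scales ($O(\epsilon)$ on $\sigma_i$ versus $O(1)$ on $\gamma_i$) identifies this as a relaxation oscillation. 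Orbital stability then follows from the spectrum of $D\mathcal{P}^\epsilon$, which converges to that of $D\widetilde{P}(A_1,\zeta_1)$, together with the fact that the direction tangent to the flow is exponentially contracting on the slow segments and hence inert for stability. The principal obstacle is the slow-passage step: producing $C^1$ (not merely pointwise) convergence $\Psi_i^\epsilon\to \widehat{Q}_i$ uniformly in initial data, while simultaneously controlling all $m$ normal components as the trajectory lies exponentially close to $M_i$ over an $O(1/\epsilon)$ time window — this is exactly where the multi-dimensional blow-up analysis and Assumption~\ref{hyp_int} carry the proof.
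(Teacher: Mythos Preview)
Your proposal is correct and follows essentially the same route as the paper: augment the flow by the $\zeta$-integrators, factor the Poincar\'e map into fast transitions converging in $C^1$ to $\pi_i\times\mathrm{id}$ and slow passages converging to $\widehat{Q}_i$ via a blow-up/Exchange-Lemma argument, and close with the implicit function theorem. The paper differs only in organization---it places the primary cross sections $\Gamma_i$ on the fast orbits $\gamma_i$ rather than near the landing points, and subdivides each slow passage into three regimes (entry corner, pure slow drift in the $(p,\zeta)$-chart, exit corner) handled by separate charts $\kappa_{\epsilon i}^{(1)},\kappa_{\epsilon i}^{(2)},\kappa_{\epsilon i}^{(3)}$, with Lemma~\ref{lem_EL} doing the work at the corners---but the strategy and the identification of Assumption~\ref{hyp_int} as the mechanism preventing premature exit in a wrong normal direction are exactly as you describe.
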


The proof of the theorem
is shown in Section~\ref{sec_proof_main3}.

\subsection{Some Computable Formulas}
\label{sec_formulas}
For each $i=1,2,\dots,N$,
for convenience we define $f_i(p)=f(p,z_i,0)$
and $p_i(\tau)=\Phi_{i}(\tau,A_i)$.
Let $L_i(\tau)$ be the fundamental matrix
for the variational equations of \eqref{slow_pz}
along $\sigma_i$.
This means that for any $v\in \mathbb R^n$,
$w(\tau)=L_i(\tau)v$ is the solution of
\begin{equation}\label{def_Li}
  \frac{d}{d\tau}w
  =\big[Df_i(p_i(\tau))\big]w,
  \quad
  w(0)=v_0,
  \quad\text{for }\;
  0\le \tau\le \tau_i.
\end{equation}
It can be shown that, for $v\in \mathbb R^n$ and $0\le \tau\le \tau_i$, \begin{equation}\label{Li_DPhi}
  L_i(\tau)v=D\Phi(\tau,A_i)v
\end{equation} and \begin{equation}\label{Li_int}
  L_i(\tau)v
  = v
  + \int_0^{\tau} \big[Df_i(p_i(s))\big]L_i(s)v\; ds.
\end{equation}
We define the linear functional $\mu_i$ on $\mathbb R^n$
by \begin{equation}\label{def_mu}
  \mu_i(v)
  =\int_0^{\tau_i}
  \left\langle
  L_i(\tau)v,
  D\frac{\partial g^{(J_i)}}{\partial z^{(J_i)}}{(p_i(\tau),z_i,0)}
  \right\rangle
  d\tau
  \quad
  \text{for }v\in \mathbb R^n,
\end{equation}
where $D$ denotes the derivative with respect to $p$.

\begin{proposition}
\label{prop_DQi}
Let ${Q}_i$ be defined by \eqref{def_Qim1}.
Then
\begin{equation}\label{DQi_Li}
  D{Q}_i(A_i,\zeta_i)v
  = L_i(\tau_i)v
  -\frac{\mu_i(v)}{\displaystyle
    \frac{\partial g^{(J_i)}}{\partial z^{(J_i)}
  }(B_i,z_i,0)}
  f(B_i,z_i,0)
  \quad
  \forall v\in \mathbb R^n.
\end{equation}
In particular, \begin{equation}\label{DQi_fBi}
  D{Q}_i(A_i,\zeta_i)f(A_i,z_i,0)
  = \frac{\displaystyle
    \frac{\partial g^{(J_i)}}{\partial z^{(J_i)}}(A_i,z_i,0)
  }{\displaystyle
    \frac{\partial g^{(J_i)}}{\partial z^{(J_i)}}(B_i,z_i,0)
  }f(B_i,z_i,0).
\end{equation}
\end{proposition}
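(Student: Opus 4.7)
The plan is to prove \eqref{DQi_Li} by a chain-rule calculation starting from $Q_i(p)=\Phi_i(T_i(p),p)$, and then to derive \eqref{DQi_fBi} from \eqref{DQi_Li} by exploiting the fact that the tangent vector along the slow orbit $p_i$ itself solves the variational equation \eqref{def_Li}.

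For \eqref{DQi_Li}, I would evaluate at $p=A_i$, using $T_i(A_i)=\tau_i$ and $\Phi_i(\tau_i,A_i)=B_i$. The chain rule gives
\[
  DQ_i(A_i)v
  =\frac{\partial\Phi_i}{\partial\tau}(\tau_i,A_i)\,DT_i(A_i)v
  + D_p\Phi_i(\tau_i,A_i)v,
\]
whose first factor equals $f(B_i,z_i,0)$ since $\Phi_i$ solves \eqref{slow_pz}, and whose second summand equals $L_i(\tau_i)v$ by \eqref{Li_DPhi}. What remains is to compute $DT_i(A_i)$, and I would obtain it by implicit differentiation of \eqref{def_Tim1}. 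Differentiating in the direction $v$ produces the boundary contribution $\frac{\partial g^{(J_i)}}{\partial z^{(J_i)}}(B_i,z_i,0)\,DT_i(A_i)v$ together with an integral whose integrand is $\langle L_i(\tau)v,\,D\frac{\partial g^{(J_i)}}{\partial z^{(J_i)}}(p_i(\tau),z_i,0)\rangle$; this integral is precisely $\mu_i(v)$ by \eqref{def_mu}. Assumption~\ref{hyp_nondegen} ensures the boundary coefficient is nonzero, so one can solve $DT_i(A_i)v=-\mu_i(v)/\frac{\partial g^{(J_i)}}{\partial z^{(J_i)}}(B_i,z_i,0)$, and substitution back into the chain-rule expression yields \eqref{DQi_Li}.

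For \eqref{DQi_fBi} I would specialize $v=f(A_i,z_i,0)$. The key observation is that $f(p_i(\tau),z_i,0)$ itself solves the variational equation \eqref{def_Li} with initial value $f(A_i,z_i,0)$, because differentiating $p_i'=f_i(p_i)$ gives $\frac{d}{d\tau}f(p_i(\tau),z_i,0)=Df_i(p_i(\tau))\,f(p_i(\tau),z_i,0)$. Hence $L_i(\tau_i)f(A_i,z_i,0)=f(B_i,z_i,0)$, and the integrand of $\mu_i(f(A_i,z_i,0))$ collapses to the total derivative $\frac{d}{d\tau}\frac{\partial g^{(J_i)}}{\partial z^{(J_i)}}(p_i(\tau),z_i,0)$, which integrates telescopically to $\frac{\partial g^{(J_i)}}{\partial z^{(J_i)}}(B_i,z_i,0)-\frac{\partial g^{(J_i)}}{\partial z^{(J_i)}}(A_i,z_i,0)$. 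Substituting into \eqref{DQi_Li} and simplifying produces \eqref{DQi_fBi}.

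No substantial analytic obstacle is expected: $T_i$ is smooth near $A_i$ by the implicit function theorem applied to \eqref{def_Tim1}, which is legitimized precisely by the nonvanishing of $\frac{\partial g^{(J_i)}}{\partial z^{(J_i)}}(B_i,z_i,0)$ from Assumption~\ref{hyp_nondegen}. The main point requiring care is the bookkeeping of the $J_i$-index, so that the integral constraint being differentiated matches the component appearing in the definition of $\mu_i$.
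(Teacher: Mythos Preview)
Your proposal is correct and follows essentially the same route as the paper: implicit differentiation of \eqref{def_Tim1} to compute $DT_i(A_i)v=-\mu_i(v)/\frac{\partial g^{(J_i)}}{\partial z^{(J_i)}}(B_i,z_i,0)$, then the chain rule on $Q_i(p)=\Phi_i(T_i(p),p)$ to obtain \eqref{DQi_Li}, and finally the observation that $f_i(p_i(\tau))$ solves \eqref{def_Li} so that $\mu_i(f_i(A_i))$ telescopes. The only cosmetic difference is that the paper writes $Q_i(p)=p+\int_0^{T_i(p)}f_i(\Phi_i(\tau,p))\,d\tau$ and differentiates the integral (invoking \eqref{Li_int}), whereas you apply the chain rule to $\Phi_i$ directly via \eqref{Li_DPhi}; the two computations are equivalent.
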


\begin{proof}
By differentiating \eqref{def_Tim1} with respect to $p$
we obtain \begin{equation*}\begin{aligned}
  &\langle DT_i(p),v\rangle
  \frac{\partial g^{(J_i)}}{\partial z^{(J_i)}}
  \big(
    \Phi_{i}(\tau_i,A),z_i,0
  \big)
  \\
  &\quad
  +\int_0^{T_i(A)}
  \left\langle
  D\frac{\partial g^{(J_i)}}{\partial z^{(J_i)}}
  \big(
    \Phi_{i}(\tau,A),z_i,0
  \big),
  D\Phi_{i}(\tau,A)v
  \right\rangle
  \;d\tau
  =0
  \quad
  \forall v\in  \mathbb R^n.
\end{aligned}\end{equation*}
Evaluating this equation at $A=A_i$
yields \begin{equation*}
  \langle DT_i(p),v\rangle
  \frac{\partial g^{(J_i)}}{\partial z^{(J_i)}}(B_i,z_i,0)
  =-\int_0^{\tau_i}
  \left\langle
  D\frac{\partial g^{(J_i)}}{\partial z^{(J_i)}}
  \big(
    p_i(\tau),z_i,0
  \big),
  L_i(\tau)v
  \right\rangle
  \;d\tau.
\end{equation*} By \eqref{def_mu} it follows that
\begin{equation}\label{DTi_mu}
  \langle DT_i(p),v\rangle
  =\frac{-\mu_i(v)}{\displaystyle
    \frac{\partial g^{(J_i)}}{\partial z^{(J_i)}}(B_i,z_i,0)
  }.
\end{equation}

On the other hand,
since $\Phi_i$ is the solution operator for \eqref{slow_pz},
the definition of $Q_i$ in \eqref{def_Qim1} means that \begin{equation*}
  Q_i(p)=p+\int_0^{T_i(p)} f_i(\Phi_i(\tau,p))\;d\tau.
\end{equation*}
Differentiating both sides of the equation with respect to $p$ gives
\begin{equation*}\begin{aligned}
  DQ_i(p)v
  &=v
  + \left\langle DT_i(p),v \right\rangle
  f_i(\Phi_i(T_i(p),p))
  \\
  &\qquad
  +\int_0^{T_i(p)} Df_i(\Phi_i(\tau,p))D\Phi_i(\tau,p)v\;d\tau
  \quad\forall v\in \mathbb R^n.
\end{aligned}\end{equation*}
Evaluating the equation at $p=A_i$
and using \eqref{Li_DPhi}
we have \begin{equation*}\begin{aligned}
  DQ_i(A_i)v
  &= v
  + \left\langle DT_i(A_i),v \right\rangle f_i(B_i)
  + \int_0^{\tau_i} Df_i(p_i(\tau))L_i(\tau)v\;d\tau.
\end{aligned}\end{equation*}
By \eqref{Li_int} it follows that \begin{equation}\label{DQi_DTi}\begin{aligned}
  DQ_i(A_i)v
  &= L_i(\tau_i)v
  + \left\langle DT_i(A_i),v \right\rangle f_i(B_i).
\end{aligned}\end{equation}
Substituting \eqref{DTi_mu} into \eqref{DQi_DTi},
we then obtain \eqref{DQi_Li}.

Since $f_i(p_i(\tau))$ is a solution of \eqref{def_Li}
with $v_0=f_i(A_i)$, \begin{equation}\label{Li_fAi}
  L_i(\tau)f_i(A_i)=f_i(p_i(\tau))
  \quad 
  \text{for }\;
  0\le \tau\le \tau_i.
\end{equation}
Using $\frac{d}{d\tau}p_i(\tau)=f_i(p_i(\tau))$
and \eqref{Li_fAi},
evaluating \eqref{def_mu} at $v=f_i(p)$ gives
\begin{equation}\label{mu_fAi}
  \mu(f_i(A_i))
  =\left.
  \frac{\partial g^{(J_i)}}{\partial z^{(J_i)}}{(p_i(\tau),z_i,0)}
  \right|_{\tau=0}^{\tau_i}
  =
  \frac{\partial g^{(J_i)}}{\partial z^{(J_i)}}{(B_i,z_i,0)}
  - \frac{\partial g^{(J_i)}}{\partial z^{(J_i)}}{(A_i,z_i,0)}.
\end{equation}
Substituting \eqref{mu_fAi} into \eqref{DQi_Li}
we obtain \eqref{DQi_fBi}.
\end{proof}

\begin{remark}
Numerical approximations of
$L_i$ and $\mu_i$ 
can be computed
by extending system \eqref{fast_pz}
of $p$
to a system of $(p,w,\mu)$
by appending
equations \eqref{def_mu} and \begin{equation*}
  \frac{d}{d\tau}
  \mu_i
  =   \left\langle
  L_i(\tau)v,
  D\frac{\partial g^{(J_i)}}{\partial z^{(J_i)}}{(p_i(\tau),z_i,0)}
  \right\rangle.
\end{equation*}
\label{rmk_computing}
\end{remark}

\begin{proposition}
\label{prop_DQiHat}
Let $\widehat{Q}_i$ be defined by \eqref{def_QiHat}.
Then \begin{equation}\label{DpQiHat}\begin{aligned}
  &D\widehat{Q}_i(A_i,\zeta_i)(v,0)
  \\
  &\hspace{2em}
  =\left(
    DQ_i(A_i)v,\;
    \frac{-\nu_i(v)}{\frac{\partial g^{(J_i)}}{\partial z^{(J_i)}}(B_i,z_i,0)}
    \sum_{j\ne J_i}
      \frac{\partial g^{(j)}}{\partial z^{(j)}}\big(B_i,z_i,0)
    \,{\sf e}_j
  \right)
  \quad\forall v\in \mathbb R^n,
\end{aligned}\end{equation} where $\nu_i(v)$ is defined by \eqref{def_mu},
and \begin{equation}\label{DzQiHat}\begin{aligned}
  &D\widehat{Q}_i(A_i,\zeta_i)(0,{\sf e}_j)
  \\
  &\quad
  =\begin{cases}
    (0,{\sf e}_j),
    &\text{if }j\ne J_i
    \\[.5em]
    \frac{1}{\frac{\partial g^{(J_i)}}{\partial z^{(J_i)}}(B_i,z_i,0)}
    \left(
      f(B_i,z_i,0),\;
      \sum_{k\ne J_i}
      \frac{\partial g^{(k)}}{\partial z^{(k)}}\big(B_i,z_i,0)
      \,{\sf e_k}
    \right),
    &\text{if }j= J_i.
  \end{cases}
\end{aligned}\end{equation}
\end{proposition}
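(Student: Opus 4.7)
The plan is to exploit the skew-product structure of system~\eqref{slow_pzeta}: because the $p$-equation is independent of $\zeta$, the solution operator $\widehat{\Phi}_i$ has $p$-component $\Phi_i(\tau,p)$ and $k$-th $\zeta$-component $\zeta^{(k)} + \int_0^\tau \frac{\partial g^{(k)}}{\partial z^{(k)}}(\Phi_i(s,p),z_i,0)\,ds$. Substituting $\tau = \widehat{T}_i(p,\zeta)$ into \eqref{def_QiHat} gives an explicit expression for $\widehat{Q}_i$, and I would differentiate it component by component at the base point $(A_i,\zeta_i)$, using $\Phi_i(\tau_i,A_i) = B_i$.

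The first step is to compute the partial derivatives of $\widehat{T}_i$ at $(A_i,\zeta_i)$ by applying the implicit function theorem to the defining equation~\eqref{def_TiHat}. Differentiation in $p$ reproduces the computation already carried out in \eqref{DTi_mu}, yielding
\begin{equation*}
  D_p \widehat{T}_i(A_i,\zeta_i)\,v
  = \frac{-\mu_i(v)}{\frac{\partial g^{(J_i)}}{\partial z^{(J_i)}}(B_i,z_i,0)},
\end{equation*}
while differentiation in $\zeta^{(j)}$ yields the purely boundary contribution $D_{\zeta^{(j)}}\widehat{T}_i(A_i,\zeta_i) = 0$ for $j \ne J_i$ and $D_{\zeta^{(J_i)}}\widehat{T}_i(A_i,\zeta_i) = -1/\frac{\partial g^{(J_i)}}{\partial z^{(J_i)}}(B_i,z_i,0)$.

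With $D\widehat{T}_i$ in hand, I apply the chain rule to the explicit formula for $\widehat{Q}_i$. For the $p$-component, the $(v,0)$-derivative equals $L_i(\tau_i)v + f_i(B_i)\cdot D_p\widehat{T}_i(A_i,\zeta_i)v$, which collapses to $DQ_i(A_i)v$ by \eqref{DQi_Li}; the $(0,{\sf e}_j)$-derivative for $j \ne J_i$ vanishes since $D_{\zeta^{(j)}}\widehat{T}_i = 0$, and for $j = J_i$ produces a scalar multiple of $f(B_i,z_i,0)$ reflecting the shift in exit time. For the $k$-th $\zeta$-component one differentiates the quadrature $\zeta^{(k)} + \int_0^{\widehat{T}_i}\frac{\partial g^{(k)}}{\partial z^{(k)}}(\Phi_i(s,p),z_i,0)\,ds$, obtaining a boundary contribution $\frac{\partial g^{(k)}}{\partial z^{(k)}}(B_i,z_i,0) \cdot D\widehat{T}_i$, the identity contribution ${\sf e}_k$ on the $\zeta$-input, and an integral-variation contribution on the $p$-input.

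The main bookkeeping point is the $\zeta^{(k)}$-component of the $(v,0)$-derivative for $k \ne J_i$: the boundary piece reproduces exactly the expression in \eqref{DpQiHat}, and one must verify that the accompanying integral-variation term $\int_0^{\tau_i}\langle D\frac{\partial g^{(k)}}{\partial z^{(k)}}(p_i(\tau),z_i,0), L_i(\tau)v\rangle\,d\tau$ has been absorbed into the quantity denoted $\nu_i(v)$, which the statement identifies with $\mu_i$ via the reference to \eqref{def_mu}. For $k = J_i$ the same integral-variation contribution is cancelled automatically by the boundary piece, thanks to the implicit defining relation~\eqref{def_TiHat} for $\widehat{T}_i$. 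Once this is accounted for, collating the chain-rule contributions into the compact forms \eqref{DpQiHat}--\eqref{DzQiHat} is routine.
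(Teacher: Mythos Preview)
Your approach is essentially identical to the paper's: write $\widehat{Q}_i$ explicitly via the skew-product structure of \eqref{slow_pzeta}, compute $D\widehat{T}_i$ from \eqref{def_TiHat}, and apply the chain rule component by component. The paper does exactly this, obtaining \eqref{DzQiHat} just as you outline.

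Where you are actually more careful than the paper is in flagging the integral-variation term
\[
  \int_0^{\tau_i}\Big\langle D\frac{\partial g^{(k)}}{\partial z^{(k)}}(p_i(\tau),z_i,0),\,L_i(\tau)v\Big\rangle\,d\tau
\]
in the $\zeta^{(k)}$-component of $D\widehat{Q}_i(A_i,\zeta_i)(v,0)$ for $k\ne J_i$. The paper's proof simply drops this term without comment: it differentiates $\int_0^{T_i(p)}\frac{\partial g^{(k)}}{\partial z^{(k)}}(\Phi_i(\tau,p),z_i,0)\,d\tau$ in $p$ and records only the boundary contribution $\langle DT_i(p),v\rangle\,\frac{\partial g^{(k)}}{\partial z^{(k)}}(B_i,z_i,0)$.

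Your proposed resolution, however, does not work. The symbol $\nu_i$ in the statement is a typo for $\mu_i$ (the reference to \eqref{def_mu} makes this clear), and $\mu_i$ is built from $\frac{\partial g^{(J_i)}}{\partial z^{(J_i)}}$, not from $\frac{\partial g^{(k)}}{\partial z^{(k)}}$ with $k\ne J_i$. So the extra integral cannot be ``absorbed'' into $\mu_i$; it is a genuinely separate quantity. What you have uncovered is that the $\zeta$-part of formula \eqref{DpQiHat}, as stated and as derived in the paper, omits this contribution. Your chain-rule computation is correct; the discrepancy lies in the paper, not in your argument.
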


\begin{proof}
We identify vectors $v\in \mathbb{R}^n$ with 
their images $(v,0_m)\in \mathbb R^n\times \mathbb R^m$,
and identify
the vector
${\sf e}_j$, $j\in \{1,2,\dots,m\}$,
in the standard basis of $\mathbb R^m$,
with the vector $(0_n,{\sf e}_j)$ in $\mathbb R^n\times \mathbb R^m$.
The function $\widehat{Q}_i(p,\zeta)$ defined by \eqref{def_QiHat}
can be written as
\begin{equation}\label{QiHat_ek}\begin{aligned}
  &\widehat{Q}_i(p,\zeta)
  \\
  &\quad
  =\left(
  \Phi\big(p,\widehat{T}_i(p,\zeta)\big),
  \sum_{k\ne J_i}\left[
    \zeta^{(k)}
    +\int_0^{\widehat{T}(p,\zeta^{(J_i)})}
    \frac{\partial g^{(k)}}{\partial z^{(k)}}\big(\Phi(p,\tau),z_i,0)\;d\tau
  \right]
  {\sf e}_k
  \right).
\end{aligned}\end{equation}
Since $\widehat{T}_i(p,\zeta^{(J_i)})=T_i(p)$
and $\Phi\big(p,T_i(p)\big)=Q_i(p)$ for all $p\in \A_i$, \begin{equation*}
  \widehat{Q}_i(p,\zeta_i)
  = \left(
  Q_i(p),
  \sum_{k\ne J_i}\left[
    \zeta^{(k)}
    +\int_0^{T(p)}
    \frac{\partial g^{(k)}}{\partial z^{(k)}}\big(\Phi(p,\tau),z_i,0)\;d\tau
  \right]
  {\sf e}_k
  \right).
\end{equation*}
Hence \begin{equation*}\begin{aligned}
  &D\widehat{Q}_i(p,\zeta_i)(v,0)
  \\
  &\quad
  =\left(
    DQ_i(p)v,
    \left\langle
      DT(p),v
    \right\rangle
    \sum_{j\ne J_i}
      \frac{\partial g^{(j)}}{\partial z^{(j)}}\big(\Phi(p,\tau),z_i,0)
    \,{\sf e}_j
  \right)
  \quad\forall v\in \mathbb R^n.
\end{aligned}\end{equation*}
Evaluating this equation at $p=A_i$,
by \eqref{DTi_mu}
we then obtain \eqref{DpQiHat}.

For each $j\in \{1,2,\dots,m\}\setminus\{J_i\}$,
differentiating \eqref{QiHat_ek} with respect to $\zeta^{(j)}$
gives
$\frac{\partial}{\partial \zeta^{(j)}}\widehat{Q}_i(p,\zeta)={\sf e}_j$
for all $(p,\zeta)$.
On the other hand, 
by differentiating \eqref{QiHat_ek} with respect to $\zeta^{(J_i)}$,
from the relation $\frac{\partial}{\partial \tau}\Phi(p,\tau)=f(\Phi(p,\tau))$ we obtain
\begin{equation}\label{DzQiHat_Ti}\begin{aligned}
  &\frac{\partial}{\partial \zeta^{(J_i)}}
  \widehat{Q}_i(p,\zeta)
  \\
  &\quad
  =\frac{\partial \widehat{T}(p,\zeta^{(J_i)})}{\partial \zeta^{(J_i)}}
  \left(
    f\big(\Phi(p,\widehat{T}_i(p,\zeta^{(J_i)})),z_i,0\big),
    \sum_{k\ne J_i}
    \frac{\partial g^{(k)}}{\partial z^{(k)}}\big(B_i,z_i,0)
    \,{\sf e_k}
  \right),
\end{aligned}\end{equation}
Note that differentiating \eqref{def_TiHat} 
with respect to $\zeta^{(J_i)}$
gives
\begin{equation}\label{DTiHat_Bi}
  \frac{\partial \widehat{T}_i(A_i,\zeta^{(J_i)})}{\partial \zeta^{(J_i)}}
  =\frac{-1}{\frac{\partial g^{(J_i)}}{\partial z^{(J_i)}}\big(B_i,z_i,0)}.
\end{equation}
By \eqref{DzQiHat_Ti} and \eqref{DTiHat_Bi} it follows that \begin{equation*}
  \frac{\partial}{\partial \zeta^{(J_i)}}
  \widehat{Q}_i(A_i,\zeta)
  =\frac{-1}{\frac{\partial g^{(J_i)}}{\partial z^{(J_i)}}\big(B_i,z_i,0)}
  \left(
    f(B_i,z_i,0),
    \sum_{k\ne J_i}
    \frac{\partial g^{(k)}}{\partial z^{(k)}}\big(B_i,z_i,0)
    \,{\sf e_k}
  \right).
\end{equation*}
This means that \eqref{DzQiHat} holds.
\end{proof}

Let $\Psi_i$ be the solution operator for \eqref{fast_pq}.
Let $t_i$ be the positive number such that \begin{equation*}
  \Psi_i\big(t_i,(B_{i-1},z_{i-1}^{(J_i)})\big)
  = (A_{i-1},z_{i}^{(J_i)}).
\end{equation*}
Let \begin{equation*}
  \bar{\gamma}_i(t)= \Psi_i\big(t,(B_{i-1},z_{i-1}^{(J_i)})\big),
  \quad
  0\le t\le t_i.
\end{equation*}
Thus $\bar{\gamma}$ has the same trajectory
as the curve $\gamma$ given in Assumption~\ref{hyp_fast}.

We define $R_i(t):\mathbb R^n\to \mathbb R^n$
and $\nu_i(t):\mathbb R^n\to \mathbb R$, $0\le t\le t_i$,
to be the linear operators so that
for any $v_0\in \mathbb R^n$,
$R_i(t)[v_0]$ and $\nu_i((t)[v_0]$
are the $v$- and $w$-components, respectively,
of the variational equations of \eqref{fast_pq}
along $\bar{\gamma}_i(t)$
with initial data $(v_0,0)$.
This means that
for any $(v_0,w_0)\in \mathbb R^{n}\times \mathbb R$,
$(v,w)=\big(R_i(t)[v_0],\nu_i((t)[v_0]\big)$
is the solution of \begin{equation}\label{def_R}
  \frac{d}{dt}\begin{pmatrix} v\\ w\end{pmatrix}
  =
  \begin{pmatrix} 
    D_ph_i& D_qh_i
    \\
    D_pg_i& D_qg_i
  \end{pmatrix}_{\bar{\gamma}_i(t)}
  \begin{pmatrix} v\\ w\end{pmatrix},
  \quad
  \begin{pmatrix} v\\ w\end{pmatrix}(0)
  =\begin{pmatrix} v_0\\ 0\end{pmatrix},
\end{equation}
where $g_i$ and $h_i$ are defined by \eqref{def_gi}.

\begin{proposition}
Let $\pi_i$ be defined by \eqref{def_pi}.
Then
\begin{equation}\label{Dpi_nu}
  D\pi_i(B_{i-1})[v]
  = R_i(t_i)[v]
  - \nu_i(t_i)[v]
  \frac{h_i(A_i,z_i)}{g_i(A_i,z_i)}
  \qquad\forall v\in \mathbb R^n.
\end{equation}
Moreover, if $n=1$,
then \begin{equation}\label{Dpi1}
  D\pi_i(B_{i-1})
  =\frac{g_i(B_{i-1},z_{i-1})}{g_i(A_i,z_i)}
  \exp\left(
    \int_0^{t_i} (D_ph_i+ D_qg_i)(\widetilde{\gamma}_i(t))\;dt
  \right).
\end{equation}
\label{prop_Dpi}
\end{proposition}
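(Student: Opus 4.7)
The plan is to derive both formulas by an implicit-function computation applied to the defining relation \eqref{def_pi}. Write $\pi_i(p) = \Psi_i^{(p)}\bigl(T(p),(p,z_{i-1}^{(J_i)})\bigr)$, where $\Psi_i^{(p)}$ and $\Psi_i^{(q)}$ denote the components of $\Psi_i$, and $T(p)$ is the time function determined implicitly by $\Psi_i^{(q)}\bigl(T(p),(p,z_{i-1}^{(J_i)})\bigr) = z_i^{(J_i)}$ with $T(B_{i-1}) = t_i$. This is well-defined in a neighborhood of $B_{i-1}$ since $g_i(A_i,z_i^{(J_i)})\neq 0$ by Assumption~\ref{hyp_nondegen}.

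First I would differentiate this implicit equation with respect to $p$ at $p = B_{i-1}$. Using $\partial_t \Psi_i^{(q)}\bigl(t,(B_{i-1},z_{i-1}^{(J_i)})\bigr) = g_i(\bar{\gamma}_i(t))$ and identifying the $p$-derivative of $\Psi_i^{(q)}$ at $t=t_i$ applied to $v$ as $\nu_i(t_i)[v]$ (which is precisely how $\nu_i$ is defined in \eqref{def_R}), I get
\begin{equation*}
  g_i(A_i,z_i^{(J_i)})\, DT(B_{i-1})[v] + \nu_i(t_i)[v] = 0.
\end{equation*}
Solving for $DT(B_{i-1})[v]$ and applying the chain rule to $\pi_i$ yields
\begin{equation*}
  D\pi_i(B_{i-1})[v] = R_i(t_i)[v] + h_i(A_i,z_i^{(J_i)})\, DT(B_{i-1})[v],
\end{equation*}
which is exactly \eqref{Dpi_nu} after substitution.

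For the scalar case $n=1$ in \eqref{Dpi1}, the idea is to invoke Liouville's formula on the two-dimensional variational system \eqref{def_R}. One solution is $(R_i(t),\nu_i(t))$ with initial data $(1,0)$. A second, linearly independent solution is the tangent along the fast orbit, $(h_i,g_i)\bigl(\bar{\gamma}_i(t)\bigr)$, with initial data $(h_i,g_i)(B_{i-1},z_{i-1}^{(J_i)})$. Their Wronskian
\begin{equation*}
  W(t) = R_i(t)\, g_i(\bar{\gamma}_i(t)) - \nu_i(t)\, h_i(\bar{\gamma}_i(t))
\end{equation*}
satisfies $W' = (D_p h_i + D_q g_i)\bigl|_{\bar{\gamma}_i} W$ by Liouville's formula, so
\begin{equation*}
  W(t_i) = g_i(B_{i-1},z_{i-1}^{(J_i)})\exp\left(\int_0^{t_i}(D_p h_i+D_q g_i)(\bar{\gamma}_i(t))\,dt\right).
\end{equation*}
Meanwhile, formula \eqref{Dpi_nu} in the scalar case can be rearranged as $D\pi_i(B_{i-1})\cdot g_i(A_i,z_i^{(J_i)}) = W(t_i)$, and dividing gives \eqref{Dpi1}.

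The main obstacle is bookkeeping rather than any substantive difficulty: one must keep track of which slot of the flow operator the derivative acts on and correctly identify the tangent vector $(h_i,g_i)(\bar{\gamma}_i(t))$ along the orbit as a solution of the variational equation, so that it may serve as the second independent solution for the Wronskian computation. Sign and index errors at the nondegeneracy step are easy to make but the argument requires no essentially new technique.
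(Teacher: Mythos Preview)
Your proposal is correct and follows essentially the same route as the paper's proof: both introduce the implicit time function (the paper calls it $S_i$, you call it $T$), differentiate to express $D\pi_i$ via the variational solutions $R_i$ and $\nu_i$, and then for $n=1$ recognize $R_i g_i - \nu_i h_i$ as the determinant (Wronskian) of two solutions of \eqref{def_R} and apply Abel's/Liouville's formula with initial value $g_i(B_{i-1},z_{i-1})$. The arguments are identical in substance; only the notation differs.
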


\begin{proof}
The first part of the proof is similar to that of Proposition~\ref{prop_DQi}.
Define $S_i:\B_{i-1}\to (0,\infty)$
implicitly by $S_i(p)=t_i$ and \begin{equation}\label{Si_ci}
  z_{i-1}^{(J_{i})}
  +\int_0^{S_i(p)}
    g_i\big(
      \Psi_i\big(t,(p,z_{i-1}^{(J_{i})}\big)
    \big)
  \;dt
  =z_{i}^{(J_{i})}.
\end{equation}
Then \begin{equation}\label{pi_Psi}
    (\pi_i(p),z_i^{(J_i)})
  = \Psi_i\big(S_i(p),(p,z_{i-1}^{(J_{i})})\big).
\end{equation}
Differentiating \eqref{Si_ci} gives
(similar to the derivation of \eqref{DTi_mu})
\begin{equation}\label{DSi_nu}
  \left\langle DS_i(p),v \right\rangle g_i(A_i,z_i^{(J_i)})
  =\nu_i(t_i)[v].
\end{equation}
Differentiating \eqref{pi_Psi} gives 
(similar to the derivation of \eqref{DQi_DTi})
\begin{equation}\label{Dpi_Ri}
  D\pi_i(p)[v]
  = R_i(t_i)[v]
  - \left\langle DS_i(p),v \right\rangle h_i(A_i,z_i^{(J_i)})
\end{equation}
By \eqref{DSi_nu} and \eqref{Dpi_Ri} we obtain \eqref{Dpi_nu}.

Now we assume $n=1$.
Then \eqref{Dpi_Ri} gives \begin{align}
  D\pi_i(B_{i-1})
  &=\frac{R_i(t_i)g_i(A_i,z_i)-\nu_i(t_i)h_i(A_i,z_i)}{g_i(A_i,z_i)},
  \notag
  \\
  &
  =\frac{1}{g_i(A_i,z_i)}
  \det\begin{pmatrix}
    R_i(t)& h_i(\widetilde{\gamma}_i(t))
    \\[.2em]
    \nu_i(t)& g_i(\widetilde{\gamma}_i(t))
  \end{pmatrix}_{t=t_i}.
  \label{Dpi_det}
\end{align}
On the other hand,
when $n=1$,
$(R_i,\nu_i)(t)$ is the solution of \eqref{def_R} with $v_0=1$.
Note that $(h_i,g_i)(\widetilde{\gamma}_i(t))$
also satisfies the differential equations in \eqref{def_R}.
Hence \begin{equation*}
  \frac{d}{dt}
  \begin{pmatrix}
    R_i(t)& h_i(\widetilde{\gamma}_i(t))
    \\[.2em]
    \nu_i(t)& g_i(\widetilde{\gamma}_i(t))
  \end{pmatrix}
  = \begin{pmatrix}
    D_pg& D_qg
    \\[.5em]
    D_ph& D_qh
  \end{pmatrix}_{(p,q)=\widetilde{\gamma}_i(t)}
  \begin{pmatrix}
    R_i(t)& h_i(\widetilde{\gamma}_i(t))
    \\[.2em]
    \nu_i(t)& g_i(\widetilde{\gamma}_i(t))
  \end{pmatrix}
\end{equation*}
and \begin{equation*}
  \begin{pmatrix}
    R_i(t)& h_i(\widetilde{\gamma}_i(t))
    \\[.2em]
    \nu_i(t)& g_i(\widetilde{\gamma}_i(t))
  \end{pmatrix}_{t=0}
  =  \begin{pmatrix}
    1& h_i(B_{i-1},z_{i-1})
    \\[.2em]
    0& g_i(B_{i-1},z_{i-1})
  \end{pmatrix}.
\end{equation*}
By Abel's formula, it follows that \begin{align}
  &\det\begin{pmatrix}
    R_i(t)& h_i(\widetilde{\gamma}_i(t))
    \\[.2em]
    \nu_i(t)& g_i(\widetilde{\gamma}_i(t))
  \end{pmatrix}_{t=t_i}
  \notag
  \\
  &\quad
  =
  \det\begin{pmatrix}
    R_i(t)& h_i(\widetilde{\gamma}_i(t))
    \\[.2em]
    \nu_i(t)& g_i(\widetilde{\gamma}_i(t))
  \end{pmatrix}_{t=0}
  \exp\left(
   \int_0^{t_i}
    \mathrm{tr}
    \begin{pmatrix}
    D_pg_i& D_qg_i
    \\[.5em]
    D_ph_i& D_qh_i
  \end{pmatrix}_{(p,q)=\widetilde{\gamma}_i(t)}
  dt
  \right)
  \notag
  \\
  &\quad
  =
  \det\begin{pmatrix}
    1& h_i(B_{i-1},z_{i-1})
    \\[.2em]
    0& g_i(B_{i-1},z_{i-1})
  \end{pmatrix}
  \exp\left(
   \int_0^{t_i}(D_pg_i+D_qh_i)(\widetilde{\gamma}_i(t)) \;dt
  \right),
  \notag
  \\
  &\quad
  =g_i(B_{i-1},z_{i-1})
  \exp\left(
   \int_0^{t_i}(D_pg_i+D_qh_i)(\widetilde{\gamma}_i(t)) \;dt
  \right).
  \label{det_exp}
\end{align}
By \eqref{Dpi_det} and \eqref{det_exp}, we then obtain \eqref{Dpi1}.
\end{proof}

\section{Proofs of the Criteria}
\label{sec_proofs}
Note that Theorem~\ref{thm_main3}
is a generalization of Theorems~\ref{thm_main2} and \ref{thm_main1}.
While Theorem~\ref{thm_main3}
can be proved
without relying on the results of the other theorems,
for clarity we prove Theorem~\ref{thm_main1} first
in Section~\ref{sec_proof_main1},
and then prove the general Theorem~\ref{thm_main3}
in Section~\ref{sec_proof_main3}.

\subsection{Proof of Theorem \ref{thm_main1}}
\label{sec_proof_main1}
In this section we assume $m=1$
for system \eqref{deq_pz},
namely $(p,z)\in \mathbb R^n\times \mathbb R$.
For each $i=1,2,\dots,N$,
on curve $\gamma_i=\{(\theta_i(t),\rho_i(t)\}$
from Assumption~\ref{hyp_fast}
the function $\rho_i$ is non-constant,
so we can choose a point $(p_{0i},q_{0i})\in \gamma_i$
at which $\dot{\rho}_i\ne 0$.
Let $\Gamma_i$ be a cross section
of $\gamma_i$ at a point $(p_{0i},z_{0i})$ of the form
\begin{equation}\notag
  \Gamma_i=\{(p,z): |p-p_{0i}|<\delta_0,\; z=q_{0i}\},
\end{equation}
where $\delta_0>0$ is to be determined.
Our strategy is to track trajectories
that evolve from $\Gamma_i$ along
the flow \eqref{deq_pz}
and reach $\Gamma_{i+1}$
near the configuration $\gamma_i\cup \sigma_i\cup \gamma_{i+1}$.
We set a cross section $\Sigma_i$
of $\sigma_i$ and analyze
the dynamics between $\Gamma_i$ and $\Sigma_i$.
By symmetry,
the dynamics between $\Sigma_i$ and $\Gamma_{i+1}$
can also be treated.
We will choose two cross sections,
$\A_i^{\IN}$ and $\A_i^{\OUT}$,
near $A_i$
to analyze the transition map from $\Gamma_i$ to $\Sigma_i$.
A list a symbols in this proof
is given in Table~\ref{table_symbols1}.
Note that we use the notation $\kappa_{\epsilon i}^{(jk)}$
for several $\epsilon$-dependent charts.
We denote $\kappa_{\epsilon i}^{(kj)}$
the inverse of $\kappa_{\epsilon i}^{(jk)}$,
and denote $\kappa_{\epsilon i}^{(jl)}=
\kappa_{\epsilon i}^{(jk)}\circ\kappa_{\epsilon i}^{(kl)}$,
whenever they are defined.

\begin{table}[htbp]
\caption{Notations in Section~\ref{sec_proof_main1}.}
\centering
\begin{tabular}{|l|l|l|}
\hline
Variables
& Charts
& Objects
\\
\hline
$(p,z)\in \Omega$
&
$\kappa_{\epsilon i}^{(12)}(p,z,\zeta)=(p,z)$
&
$\Omega$, $\Gamma_i$
\\
\quad $=\mathbb R^n\times (\zmin,\zmax)$
&
$\kappa_{\epsilon i}^{(13)}(p,\zeta)=(p,z)$
&
\\
\hline
$p\in \mathbb R^m$
&
&
$\A_i$, $\B_i$
\\
\hline
$(p,z,\zeta)\in \Omega\times \mathbb R_+$
&
$\kappa_{\epsilon i}^{(21)}(p,z)=(p,z,\zeta)$
&
$\Atilde_i$, 
$\Atilde_i^{\IN}$, 
$\Atilde_i^{\OUT}$
\\
&
$\kappa_{\epsilon i}^{(23)}(p,\zeta)=(p,z,\zeta)$
&
\\
\hline
$(p,\zeta)\in \mathbb R^n\times \mathbb R_+$
&
$\kappa_{\epsilon i}^{(31)}(p,z)=(p,\zeta)$
&
$\Ahat_i^{\OUT}$, 
$\widehat{\Sigma}_i$
\\
&
$\kappa_{\epsilon i}^{(32)}(p,z,\zeta)=(p,\zeta)$
&
\\
\hline
\end{tabular}
\label{table_symbols1}
\end{table}

Let $\omega_i$, $1\le i\le N$, be the numbers
defined in \eqref{def_omega} for $m=1$,
which means $\omega_i=\omega_i^{(1)}$.
By Assumption~\ref{hyp_nondegen},
in a neighborhood of $(A_i,z_i)$,
for $\delta_1>0$ sufficiently small,
there is a unique point $(p_i^{\IN},z_i+\omega_i\delta_1)$
that lies on the curve $\gamma_i$.
Here $\mathbb{B}(p,r)$ is the open ball centered at $p$ with radius $r$.
Let \begin{equation}\label{def_Ain1}
  \A_i^{\IN}
  =\{(p,z): p\in \mathbb{B}(p_i^{\IN},\delta_2), z=z_i+\omega_i\delta_1\},
\end{equation}
where $\delta_1$ and $\delta_2$ are positive constants to be determined.

\begin{proposition}
Let $\Gamma_i$ and $\A_i^{\IN}$ be defined
as in the preceding paragraphs.
For fixed $\delta_1>0$ and $\delta_2>0$,
if $\delta_0>0$ is sufficiently small,
then the transition map
$\Pi_{\epsilon\Gamma_i}^{\A_i^{\IN}}$
from $\Gamma_i$ to $\A_i^{\IN}$
for system \eqref{deq_pz}
is well-defined
for all small $\epsilon\ge 0$.
Moreover, \[
  \left\|
    \Pi_{\epsilon\Gamma_i}^{\A_i^{\IN}}- \Pi_{0\Gamma_i}^{\A_i^{\IN}}
  \right\|_{C^1(\Gamma_i)}
  =O(\epsilon)
  \quad\text{as $\epsilon\to 0$},
\]
that is, 
$\Pi_{\epsilon\Gamma_i}^{\A_i^{\IN}}$
is $O(\epsilon)$-close
to $\Pi_{0\Gamma_i}^{\A_i^{\IN}}$
in the $C^1(\Gamma_i)$-norm
as $\epsilon\to 0$.
\label{prop_K1m1}
\end{proposition}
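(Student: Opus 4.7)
The plan is to prove this by regular perturbation on a compact time interval. The crucial observation is that, although system \eqref{deq_pz} is singularly perturbed, the target section $\A_i^{\IN}$ sits at the fixed positive distance $\delta_1$ from the critical manifold $M_i$, so the segment of trajectory in question never enters the slow region; the singular character of $\epsilon$ plays no role here.

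First I would locate the hitting point of $\gamma_i$ with the hyperplane $\{z=z_i+\omega_i\delta_1\}$ for the $\epsilon=0$ flow. Since $\gamma_i(t)=(\theta_i(t),z_i+\rho_i(t){\sf e}_{J_i})$ tends to $(A_i,z_i)$ as $t\to\infty$ and $\tfrac{\partial g^{(J_i)}}{\partial z^{(J_i)}}(A_i,z_i,0)<0$ by Assumption~\ref{hyp_nondegen}, the scalar $\rho_i(t)$ is eventually strictly monotone with the sign $\omega_i$. Hence for sufficiently small $\delta_1>0$ there is a unique large $t_1$ with $\rho_i(t_1)=\omega_i\delta_1$, and $\dot\rho_i(t_1)\ne 0$, so $g(p_i^{\IN},z_i+\omega_i\delta_1,0)\ne 0$. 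This shows that $\A_i^{\IN}$ is transverse to $\gamma_i$ at $(p_i^{\IN},z_i+\omega_i\delta_1)$, and by continuity the unperturbed transition map $\Pi_{0\Gamma_i}^{\A_i^{\IN}}$ is well defined and smooth on the whole of $\Gamma_i$ once $\delta_0$ is chosen small enough.

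Next I would set up an implicit-function argument for the hitting-time equation. Let $\Psi_\epsilon(t,\cdot)$ denote the flow of \eqref{deq_pz} and define
\[
  F(t,p,z,\epsilon)
  = \pi_z\,\Psi_\epsilon\big(t,(p,z)\big)-\big(z_i+\omega_i\delta_1\big),
\]
where $\pi_z$ is the projection onto the $z$-component. At $(t_1-t_{0i},p_{0i},q_{0i},0)$ we have $F=0$ and $\partial_t F=g(p_i^{\IN},z_i+\omega_i\delta_1,0)\ne 0$. The implicit function theorem then produces a $C^1$ hitting-time function $T_\epsilon(p,z)$ for $(p,z)$ near $(p_{0i},q_{0i})$ and for all sufficiently small $\epsilon\ge 0$, with $T_\epsilon\to T_0$ in $C^1$ as $\epsilon\to 0$. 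The transition map is thereby represented as $\Pi_{\epsilon\Gamma_i}^{\A_i^{\IN}}(p,z)=\mathrm{proj}_p\,\Psi_\epsilon\big(T_\epsilon(p,z),(p,z)\big)$.

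Finally I would combine the standard $C^1$ smooth dependence of $\Psi_\epsilon$ on $(t,p,z,\epsilon)$ over the compact time interval $[0,t_1-t_{0i}+1]$---obtained from Gronwall applied to the variational equations with respect to $\epsilon$---with the $C^1$ closeness of $T_\epsilon$ to $T_0$ to conclude $\|\Pi_{\epsilon\Gamma_i}^{\A_i^{\IN}}-\Pi_{0\Gamma_i}^{\A_i^{\IN}}\|_{C^1(\Gamma_i)}=O(\epsilon)$. Shrinking $\delta_0$ once more ensures that the image of $\Pi_{0\Gamma_i}^{\A_i^{\IN}}$ is compactly contained in $\A_i^{\IN}$, hence so is the perturbed image. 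I do not anticipate a real obstacle: the only delicate bookkeeping concerns the order of quantifiers on $\delta_0,\delta_1,\delta_2$ and $\epsilon$, but because $\A_i^{\IN}$ stays a fixed $O(1)$ distance $\delta_1$ away from $M_i$, no blow-up, exchange-of-stability, or entry-exit analysis is required at this stage---those ingredients will enter only when the transition between $\A_i^{\IN}$ and $\A_i^{\OUT}$ is analyzed in a subsequent step.
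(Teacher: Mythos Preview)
Your proposal is correct and follows essentially the same approach as the paper: the paper's proof is the single sentence ``Since \eqref{deq_pz} is a regular perturbation of \eqref{fast_pz}, the results follow directly from regular perturbation theory,'' and your argument simply unpacks what that sentence means (transversality of $\A_i^{\IN}$ to $\gamma_i$, implicit function theorem for the hitting time, Gronwall for $C^1$ dependence on $\epsilon$). Your remark that no blow-up or entry-exit analysis is needed here because $\A_i^{\IN}$ sits at fixed positive distance $\delta_1$ from $M_i$ is exactly the point.
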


\begin{proof}
Since \eqref{deq_pz} is a regular perturbation of \eqref{fast_pz},
the results follow directly from regular perturbation theory.
\end{proof}

Next we investigate the dynamics near $\sigma_i$.
Let $\Omega=\mathbb R^n\times (\zmin,\zmax)$.
We define
an $\epsilon$-dependent
chart $\kappa_{\epsilon i}^{(31)}$ on $\Omega$
by \begin{equation}\notag
  \kappa_{\epsilon i}^{(31)}(p,z)= (p,\zeta)
  \quad\text{with}\quad
  \zeta=\epsilon\ln\left(\frac{\omega_i}{z-z_i}\right).
\end{equation}
In this chart
system \eqref{deq_pz} 
is converted to
\begin{equation}\label{deq_k3m1}\begin{aligned}
  &{p}'= f(p,z,\epsilon)+ h(p,z,\epsilon)/\epsilon,
  \\
  &{\zeta}'= -\omega_i\,\frac{g(p,z,\epsilon)}{z-z_i},
  \\
  &\text{where }
  z=z_i+\epsilon\,\omega_i\exp(-\zeta_i/\epsilon).
\end{aligned}\end{equation}
Formally,
the limit of \eqref{deq_k3m1}
as $\epsilon\to 0$ with $z=z_i+o(\epsilon)$ is
\begin{equation}\label{slow_k3m1}\begin{aligned}
  &p'= f(p,z_i,0),
  \\
  &\zeta'= -\omega_i\,\frac{\partial g}{\partial z}(p,z_i,0).
\end{aligned}\end{equation}
Let $\widehat{\Phi}_i$ to be
the solution operator of \eqref{slow_k3m1}.
Let \begin{equation}\label{def_Ai1}
  \A_i
  =\mathbb{B}(A_i,\delta_4)
\end{equation}
and \begin{equation}\label{def_Ahat1}
  \Ahat_i^{\OUT}
  =\widehat{\Phi}_i(\A_i\times \{0\},\delta_3),
\end{equation}
where $\delta_3>0$ and $\delta_4>0$ are constants to be determined.
Let
$\widehat{\sigma}_i(\tau)=\widehat{\Phi}_i((A_i,\zeta_i),\tau)$,
$0\le \tau\le T_i$.
Let $\widehat{\Sigma}_i$
be a cross section of the curve $\widehat{\sigma}_i$
at $\widehat{\sigma}_i(T_i/2)$
in $\mathbb R^n\times \mathbb R$.
We denote $\Pi_{0\,\Ahat_i^{\OUT}}^{\widehat{\Sigma}}$
the transition map from $\Ahat_i^{\OUT}$ to $\widehat{\Sigma}_i$
following the flow of \eqref{slow_k3m1}.

\begin{proposition}
Let $\A_i$ and $\Ahat_i^{\OUT}$
be defined as in the preceding paragraphs.
For fixed $\delta_3>0$,
if $\delta_4>0$ is sufficiently small,
then the transition map
$\Pi_{\epsilon\,\Ahat_i^{\OUT}}^{\widehat{\Sigma}}$
from $\Ahat_i^{\OUT}$ to $\widehat{\Sigma}_i$
for system \eqref{deq_k3m1}
is well-defined
for all small $\epsilon> 0$.
Moreover, $\Pi_{\epsilon\,\Ahat_i^{\OUT}}^{\widehat{\Sigma}}$
is $O(\epsilon)$-close
to $\Pi_{0\,\Ahat_i^{\OUT}}^{\widehat{\Sigma}}$
in the $C^1(\Ahat_i^{\OUT})$-norm
as $\epsilon\to 0$.
\label{prop_K3m1}
\end{proposition}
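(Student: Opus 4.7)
The plan is to recognize that, in the chart $\kappa_{\epsilon i}^{(31)}$, the apparently singular terms in \eqref{deq_k3m1} are in fact exponentially small along the piece of trajectory under consideration, so that \eqref{deq_k3m1} reduces to a genuine regular $C^1$-perturbation of \eqref{slow_k3m1}. Once that is established, the proposition follows from classical smooth dependence on parameters.

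First I would verify that the region of interest is bounded away from the manifold $M_i$ in the $\zeta$-direction. The unperturbed trajectory $\widehat{\sigma}_i$ starting at $(A_i,0)$ has $\zeta$-component
\[
\zeta(\tau) \;=\; -\omega_i\int_0^\tau \frac{\partial g}{\partial z}(\Phi_i(s,A_i),z_i,0)\,ds,
\]
which by Assumptions~\ref{hyp_nondegen} and~\ref{hyp_int} is strictly positive on $(0,\tau_i)$, with $\zeta'(0)>0$ and $\zeta(\tau_i)=0$. Hence on the compact interval $[\delta_3,T_i/2]$ one has $\zeta(\tau)\ge c_0$ for some $c_0>0$. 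By continuity, if $\delta_4>0$ is small, then for every $p\in\A_i$ the trajectory $\widehat{\Phi}_i((p,0),\tau)$ of \eqref{slow_k3m1} also satisfies $\zeta(\tau)\ge c_0/2$ on $[\delta_3,T_i/2+\eta]$ for some small $\eta>0$, and stays inside a fixed compact set $K\subset\{\zeta\ge c_0/2\}$.

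Second I would quantify how close \eqref{deq_k3m1} is to \eqref{slow_k3m1} on $K$. On $K$, the inverse chart gives $|z-z_i| = e^{-\zeta/\epsilon}$ (up to a harmless $\epsilon$-factor), which is bounded by $e^{-c_0/(2\epsilon)}$. By Assumption~\ref{hyp_cmin}, $h$ and $g$ vanish on $\{z=z_i\}$, so for $(p,z,\epsilon)$ in a neighborhood of $K\times\{z_i\}\times\{0\}$ I can write
\[
h(p,z,\epsilon)=O(|z-z_i|),\qquad
\frac{g(p,z,\epsilon)}{z-z_i}=\frac{\partial g}{\partial z}(p,z_i,0)+O(\epsilon)+O(|z-z_i|),
\]
together with the analogous estimates on all $p$- and $\zeta$-derivatives. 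Substituting these into \eqref{deq_k3m1} shows that the right-hand side differs from that of \eqref{slow_k3m1} by $O(\epsilon)+O(e^{-c_0/(2\epsilon)}/\epsilon)=O(\epsilon)$ in $C^1(K)$.

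Third, a standard Gr\"onwall argument applied to both the flow and its variational equation then yields that the time-$t$ map of \eqref{deq_k3m1} is $O(\epsilon)$-close in $C^1$ to that of \eqref{slow_k3m1}, uniformly for $t$ in the bounded interval $[0,T_i/2-\delta_3+\eta]$ and for initial data in $\Ahat_i^{\OUT}$. Since the unperturbed flow hits $\widehat\Sigma_i$ transversally at $\widehat\sigma_i(T_i/2)$, so does the $\epsilon$-flow for all small $\epsilon\ge 0$, giving a well-defined transition map $\Pi_{\epsilon\,\Ahat_i^{\OUT}}^{\widehat{\Sigma}}$ that is $O(\epsilon)$-close to $\Pi_{0\,\Ahat_i^{\OUT}}^{\widehat{\Sigma}}$ in $C^1$.

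The main obstacle is the bookkeeping to guarantee that the $\epsilon$-perturbed trajectory does not drift out of $K$ (into a region where $\zeta$ is small and the chart becomes genuinely singular) before reaching $\widehat\Sigma_i$. This is the reason for introducing the buffer $\eta$ and the enlarged compact set $K$: as long as the $\epsilon$-trajectory stays inside $K$, the $O(\epsilon)$ vector-field estimate together with Gr\"onwall forces it to remain within $K$ on the full transition interval, closing the argument.
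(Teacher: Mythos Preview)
Your proposal is correct and follows essentially the same route as the paper: you show that along the relevant slow trajectories the $\zeta$-coordinate stays bounded below by a positive constant, deduce that $|z-z_i|$ is exponentially small and hence that \eqref{deq_k3m1} is an $O(\epsilon)$ regular perturbation of \eqref{slow_k3m1} in $C^1$, and then invoke smooth dependence on parameters. The paper's proof is terser but identical in spirit; your added discussion of the buffer $\eta$ and the bootstrap ensuring the $\epsilon$-trajectory remains in $K$ is a worthwhile clarification that the paper leaves implicit.
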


\begin{proof}
Let $\Sigma$
be the image of $\widehat{\Sigma}$ via the projection $(p,\zeta)\mapsto p$.
Since the trajectory $\sigma_i$ of \eqref{slow_pz} 
connects $A_i$ and $\Sigma_i$,
we can choose $\Delta>0$
such that 
the transition map from $\A_i$
to $\Sigma_i$
whenever
$\delta_4>0$ is sufficiently small.

Note that the $p$-component of $\widehat{\Phi}_i(A_i,\tau)$
equals $\sigma_i(\tau)=\Phi_i(A_i,\tau)$ in Assumption~\ref{hyp_slow}.
Also note that Assumption~\ref{hyp_int} gives \begin{equation*}
  \inf \left\{
    \zeta: (p,\zeta)\in \widehat{\Phi}_i((A_i,0),\tau),\;
    \tau\in [\delta_3,\tau_i-\delta_3]
  \right\}
  >0.
\end{equation*}
Therefore,
by decreasing $\Delta$
if necessary,
for $\A_i$ defined by \eqref{def_Ai1} with $\delta_3\in (0,\Delta)$,
\begin{equation}\label{est_zeta1}
  \inf \left\{
    \zeta: (p,\zeta)\in \widehat{\Phi}_i((p_0,0),\tau),\;
    p_0\in \A_i,\;
    \tau\in [\delta_3,\tau_i-\delta_3]
  \right\}>C
\end{equation} for some $C>0$.
Substituting \eqref{est_zeta1} into \eqref{deq_k3m1}, we have
\begin{equation}\label{deq_k30m1}\begin{aligned}
  &{p}'= f(p,z_i,0)+ O\big(\epsilon+ e^{-C/\epsilon}/\epsilon\big),
  \\
  &{\zeta}'= -\omega_i\frac{\partial g}{\partial z}(p,z_i,0)
  + O(\epsilon).
\end{aligned}\end{equation}
Hence
\eqref{deq_k3m1}
is a regular perturbation 
of \eqref{slow_k3m1}
in a neighborhood of the set
\begin{equation*}
  \{\widehat{\Phi}(x,\tau):
  x\in \Ahat_i^{\OUT},\tau\in [0,\tau_i-2\delta_3]\}.
\end{equation*}
Therefore, 
by regular perturbation theory,
$\Pi_{\epsilon\,\Ahat_i^{\OUT}}^{\widehat{\Sigma}}$
is well-defined for small $\epsilon>0$
and 
is $O(\epsilon)$-close
to $\Pi_{0\,\Ahat_i^{\OUT}}^{\widehat{\Sigma}}$
in the $C^1(\Ahat_i^{\OUT})$-norm
as $\epsilon\to 0$.
\end{proof}

Finally we investigate the dynamics
near the joint of $\gamma_1$ and $\sigma_i$.
We define
\begin{equation}\notag
  \kappa_{\epsilon i}^{(21)}(p,z)= (p,z,\zeta)
  \quad\text{with}\quad
  \zeta=\epsilon\ln\left(\frac{\omega_i}{z-z_i}\right)
  \quad\text{for}\quad
  (p,z)\in \Omega,\;
  \epsilon\ge 0.
\end{equation}
Note that $\kappa_{\epsilon i}^{(21)}(p,z)=(p,z,\zeta)$
can be obtained by appending $z$ to $\kappa_{\epsilon i}^{(31)}(p,z)=(p,\zeta)$.
The transformation $\kappa_{\epsilon i}^{(21)}$
converts system \eqref{deq_pz} to
\begin{equation}\label{deq_k2m1}\begin{aligned}
  &\dot{p}= \epsilon f(p,z,\epsilon)+ h(p,z,\epsilon),
  \\
  &\dot{z}= g(p,z,\epsilon),
  \\
  &\dot{\zeta}= -\epsilon\,\omega_i\,\frac{g(p,z,\epsilon)}{z-z_i}.
\end{aligned}\end{equation}
We define \begin{equation}\label{def_AtildeIN1}
  \Atilde_{\epsilon i}^{\IN}
  =\kappa_{\epsilon i}^{(12)}(\A_i^{\IN})
  \quad\text{for $\epsilon\ge 0$},
\end{equation}
which means \[
  \Atilde_{\epsilon i}^{\IN}
  =\left\{
    (p,z,\zeta): p\in \mathbb B(p^{\IN}_{0i},\delta_2),\;
    z=z_i+\omega_i\delta_1,\;
    \zeta=\epsilon\ln \delta_1
  \right\}.
\]
Note that $\kappa_{0 i}^{(21)}(p,z)=(p,z,0)$ for all $(p,z)\in \A_i^{\IN}$.

Taking $\epsilon\to 0$ in \eqref{deq_k2m1}
leads to the system \eqref{fast_pz}
companioned with $\dot{\zeta}=0$.
By Assumptions~\ref{hyp_fast} and \ref{hyp_nondegen},
the projection \begin{equation*}
  \Pi_{0\,\A_i^{\IN}}^{\A_i}
  : \A_i^{\IN}\to {\A_i}\times \{z_i\}
\end{equation*}
following the flow of \eqref{fast_pz} is well-defined
and is a local homeomorphism.
We define
$\Pi_{0\,\Atilde_{0i}^{\IN}}^{\Atilde_{0i}}
=\Pi_{0\,\A_i^{\IN}}^{\A_i}\times \mathrm{id}$,
which means \begin{equation*}
  \Pi_{0\,\Atilde_{0i}^{\IN}}^{\Atilde_{0i}}(p,z,\zeta_i)
  = \big(\Pi_{0\,\A_i}^{\A_i}(p,z),\zeta_i\big).
\end{equation*}

In the slow time variable $\tau=\epsilon t$,
taking $\epsilon\to 0$
in \eqref{deq_k2m1} 
with $z=z_i+o(\epsilon)$
leads to \eqref{slow_k3m1}
appended by the equation $z=z_i$.
We define $\widetilde{\Phi}_i\big((p,z_i,\zeta),\tau\big)$
on $\Atilde_{0i}\times [0,\tau_i]$
to be the image of $\widehat{\Phi}\big((p,\zeta),\tau\big)$
in the space $\{(p,z,\zeta): z=z_i\}$.
Also we define
\begin{equation}\notag
  \Atilde_{\epsilon i}^{\OUT}
  =\kappa_{\epsilon i}^{(23)}\big(\Ahat_i^{\OUT}\big)
  \quad\text{for}\;
  \epsilon> 0.
\end{equation}
Note that \begin{equation*}
  \Pi_{0\,\Atilde_{0i}}^{\Atilde_{0i}^{\OUT}}
  =\widetilde{\Phi}_{i}(\cdot,\delta_4).
\end{equation*}

\begin{proposition}
There exists $\Delta>0$ such that
the following assertions hold.
Let $\Atilde_i^{\IN}$ and $\Atilde_i^{\OUT}$
be defined as in the preceding paragraphs
with $\delta_j<\Delta$, $j=1,2,3$,
then for all sufficiently small $\delta_4>0$,
the transition map 
$\Pi_{\epsilon\,\Atilde_{\epsilon i}^{\IN}}^{\Atilde_{\epsilon i}^{\OUT}}$
from $\Atilde_{\epsilon i}^{\IN}$ to $\Atilde_{\epsilon i}^{\OUT}$
following the flow of \eqref{deq_k2m1} is well-defined
for all small $\epsilon>0$.
Moreover,
\begin{equation}\label{trans_K2m1}
  \left\|
    \Pi_{\epsilon\,\Atilde_{\epsilon i}^{\IN}}^{\Atilde_{\epsilon i}^{\OUT}}
    \circ
    \kappa_{\epsilon i}^{(21)}
    -
    \Pi_{0\,\Atilde_{0i}}^{\Atilde_{0i}^{\OUT}}
    \circ 
    \Pi_{0\,\Atilde_{0 i}^{\IN}}^{\Atilde_{0i}}
    \circ \kappa_{0i}^{(21)}
  \right\|_{C^1(\A_i^{\IN})}
  =O(\epsilon)
\end{equation}
as $\epsilon\to 0$.
\label{prop_K2m1}
\end{proposition}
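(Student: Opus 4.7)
The plan is to factor the transition $\Pi_{\epsilon\,\Atilde_{\epsilon i}^{\IN}}^{\Atilde_{\epsilon i}^{\OUT}}$ through an intermediate cross section of the $\kappa_{\epsilon i}^{(21)}$-chart at a fixed value of the slow variable $\zeta$. This separates the fast boundary-layer attraction onto $M_i$ from the slow drift along $M_i$; the latter is already under the control of Proposition~\ref{prop_K3m1}. Fix a constant $\delta^{*}>0$ independent of $\epsilon$, chosen small enough that the $(p,\zeta)$-trajectory of \eqref{slow_k3m1} starting at $(A_i,0)$ crosses $\zeta=\delta^{*}$ well before reaching $\Ahat_i^{\OUT}$, and introduce
\begin{equation*}
  \widetilde{\Sigma}_\epsilon
  =\bigl\{(p,z,\zeta):\,
  |p-A_i|<\delta_4,\;
  \zeta=\delta^{*},\;
  z=z_i+\epsilon\omega_i e^{-\delta^{*}/\epsilon}\bigr\}.
\end{equation*}
The decomposition $\Pi_{\epsilon\,\Atilde_{\epsilon i}^{\IN}}^{\Atilde_{\epsilon i}^{\OUT}}
= \Pi_{\epsilon\,\widetilde{\Sigma}_\epsilon}^{\Atilde_{\epsilon i}^{\OUT}}
\circ \Pi_{\epsilon\,\Atilde_{\epsilon i}^{\IN}}^{\widetilde{\Sigma}_\epsilon}$ then reduces the problem to treating the two factors separately.

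For the second factor I would push forward by $\kappa_{\epsilon i}^{(32)}$, which sends $\widetilde{\Sigma}_\epsilon$ to the fixed section $\widehat{\Sigma}=\{(p,\delta^{*}):|p-A_i|<\delta_4\}$ and $\Atilde_{\epsilon i}^{\OUT}$ to $\Ahat_i^{\OUT}$, and then apply the argument of Proposition~\ref{prop_K3m1}: on the region $\{\zeta\ge\delta^{*}/2\}$ one has $|z-z_i|=O(e^{-\delta^{*}/(2\epsilon)})$, so \eqref{deq_k3m1} is a regular perturbation of \eqref{slow_k3m1} and the corresponding transition map is $O(\epsilon)$-close in $C^1$ to the time-$(\tau_i-T^{*})$ map of \eqref{slow_k3m1}, where $T^{*}$ is the slow time it takes \eqref{slow_k3m1} to travel from $(A_i,0)$ to $\widehat{\Sigma}$. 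For the first factor I would invoke Fenichel's attracting local slow manifold $M_{\epsilon,i}^{a}$ and its $C^1$-smooth stable foliation, both furnished by Assumption~\ref{hyp_nondegen} at the normally attracting portion of $M_i$ near $A_i$: the trajectory from $\Atilde_{\epsilon i}^{\IN}$ first contracts along stable fibers onto an $O(\epsilon)$-neighborhood of $M_{\epsilon,i}^{a}$ in fast time $O(\log(1/\epsilon))$, during which $\zeta$ changes by only $O(\epsilon\log(1/\epsilon))$ because $\dot\zeta=-\epsilon\omega_i g/(z-z_i)$ is uniformly $O(\epsilon)$, after which $\zeta$ grows to $\delta^{*}$ under the same regular perturbation used for the second factor.

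The main obstacle is to avoid a logarithmic loss in the $C^1$-estimate across the fast boundary layer: a direct Gr\"onwall bound over fast time $O(\log(1/\epsilon))$ yields only an $O(\epsilon\log(1/\epsilon))$ error, which is weaker than the asserted $O(\epsilon)$. The remedy, which I would adopt here, is to use the $C^1$-regularity of the stable foliation of $M_{\epsilon,i}^{a}$ to factor the boundary-layer crossing as a smooth projection along stable fibers composed with the reduced flow on $M_{\epsilon,i}^{a}$: the projection is $O(\epsilon)$-close in $C^1$ to the $\epsilon=0$ layer projection $\Pi_{0\,\Atilde_{0i}^{\IN}}^{\Atilde_{0i}}$, and the reduced flow is $O(\epsilon)$-close in $C^1$ to the $\epsilon=0$ slow flow, without any logarithmic loss. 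Composing the two factor estimates and unwinding the chart changes then establishes~\eqref{trans_K2m1}.
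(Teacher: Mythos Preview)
Your outline is correct, but it takes a different and more hands-on route than the paper. The paper's proof is essentially two lines: it rewrites system~\eqref{deq_k2m1} (via $s=z-z_i$) in the normal form~\eqref{deq_bc} with $b=s$ and $a=(p,\zeta)$, checks that $\partial g/\partial z(A_i,z_i,0)<0$ gives condition~\eqref{cond_rho}, and then applies Lemma~\ref{lem_EL}, a tailored Exchange Lemma whose own proof passes to Fenichel coordinates and invokes Schecter's General Exchange Lemma~\cite{Schecter:2008b}. The entire boundary-layer-plus-slow-drift transition is handled in one stroke by that lemma, with the $O(\epsilon)$ $C^1$-estimate~\eqref{C1_EL} coming out as a black box.

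Your approach instead factors the transition through the intermediate section $\{\zeta=\delta^{*}\}$ and treats the boundary-layer piece directly via the $C^1$ stable foliation of $M_{\epsilon,i}^{a}$. You correctly identify the logarithmic-loss obstruction and the cure; what you are sketching is essentially the content of the Exchange Lemma itself. The trade-off: the paper's route is modular and short, delegating the delicate $C^1$ tracking to an external result, while yours is more self-contained but would require you to actually justify that the fiber projection is $O(\epsilon)$-close in $C^1$ to the $\epsilon=0$ layer projection---a statement that is not quite immediate from $C^1$-smoothness of the foliation alone and is precisely what the General Exchange Lemma supplies. If you want to avoid citing it, you would need to reproduce the Fenichel-coordinate argument in the proof of Lemma~\ref{lem_EL}.
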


A schematic diagram 
representing Proposition~\ref{prop_K2m1}
is shown in Figure~\ref{fig_K2m1}.
The significance
in estimate \eqref{trans_K2m1} is that
the transition map
$\Pi_{\epsilon\,\A_{\epsilon i}^{\IN}}^{\A_{\epsilon i}^{\OUT}}$
can be approximated
by the composition function of
$\Pi_{0\,\Atilde_{0i}}^{\Atilde_{0i}^{\OUT}}$
and $\Pi_{0\,\Atilde_{0 i}^{\IN}}^{\Atilde_{0i}}$,
which are determined only by the limiting systems.
To prove Proposition~\ref{prop_K2m1},
we need the following lemma,
which is a variation of 
the Exchange Lemma
in Jones and Tin \cite{Jones:2009}
and Schecter~\cite{Schecter:2008b}.

\begin{figure}[htbp]
\centering
\begin{tikzcd}
  \A_i^{\IN}
  \arrow[r,hook,"\kappa_{\epsilon i}^{(21)}"]
  \arrow[d,equal]
  & \Atilde_{\epsilon i}^{\IN}
  \arrow[d,rightsquigarrow]
  \arrow[rr,"\displaystyle%
  \Pi_{\epsilon\,\Atilde_{\epsilon i}^{\IN}}^{\Atilde_{\epsilon i}^{\OUT}}"]
  &[2em] 
  \mbox{}
  &[2em] 
  \Atilde_{\epsilon i}^{\OUT} \arrow[d,equal]
  \\
  \A_i^{\IN}
  \arrow[r,hook,"\kappa_{0 i}^{(21)}"]
  & \Atilde_{0i}^{\IN}
  \arrow[r,"\displaystyle\Pi_{0\,\Atilde_{0i}^{\IN}}^{\Atilde_{0i}}"] 
  & \Atilde_{0i}
  \arrow[r,"\displaystyle
  \Pi_{0\,\Atilde_{0i}}^{\Atilde_{0i}^{\OUT}}"]
  & \Atilde_{\epsilon i}^{\OUT}
\end{tikzcd}
\caption{A schematic diagram representing Proposition~\ref{prop_K2m1}.
Here $\hookrightarrow$ indicates injection
and $\rightsquigarrow$ indicates
the limit as $\epsilon\to 0$.
The transition map
from $\widetilde{\mathscr{A}}_{\epsilon i}^{\IN}$
to $\widetilde{\mathscr{A}}_{\epsilon i}^{\OUT}$
along \eqref{deq_k2m1}
is approximated
by the composition function of
the transition maps for
the limiting systems.
}
\label{fig_K2m1}
\end{figure}
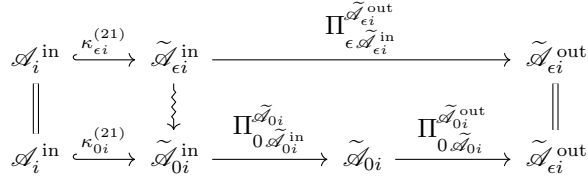

\begin{lemma}\label{lem_EL}
Consider a system for
$(a,b)\in \mathbb R^n\times \mathbb R$, $N\ge 1$,
of the form
\begin{equation}\label{deq_bc}\begin{aligned}
  &\dot{a}= \epsilon f(a,b,\epsilon)+b\,h(a,b,\epsilon),
  \\
  &\dot{b}= b\,g(a,b,\epsilon),
\end{aligned}\end{equation}
where $\cdot$ denotes $\frac{d}{dt}$,
and $f$, $g$ and $h$ are smooth functions.
Assume
\begin{equation}\label{cond_rho}
  \sup g(a,b,\epsilon)<0.
\end{equation}
Assume that for some $\bar{a}\in \mathbb R^N$
the point $(\bar{a},0)$
is the omega limit point of
a trajectory $\gamma$
of the system
\begin{equation}\label{fast_bc}\begin{aligned}
  &\dot a= b\, h(b,c,0),
  \\
  &\dot b= b\,g(b,c,0),
\end{aligned}\end{equation}
Then there exists $\Delta>0$ such that the following assertions hold.

Let $\{\A_\epsilon^{\IN}\}_{\epsilon\in [0,\epsilon_0]}$
be a smooth family of $\ell$-dimensional manifolds, $0\le \ell\le N$,
that intersects $\gamma$
at a point in $\mathbb{B}((a_0,0),\Delta)$.
Let $\Lambda\subset \mathbb R^n$
be the projection of $\A_0^{\IN}$
along the flow of system \eqref{fast_bc}.
Let $\Phi$ the solution operator
for the system \begin{equation}\label{slow_bc}
  \frac{d}{d\tau}a= f(a,0,0).
\end{equation}
Assume the following conditions hold.
\begin{enumerate}
\item[$\mathrm{(i)}$]
$\A_0^{\IN}$ is non-tangential to the flow of \eqref{fast_bc};
\item[$\mathrm{(ii)}$]
$\bar{a}\in \Lambda$ and
$\Lambda$ is compact
and is non-tangential to the flow of \eqref{slow_bc};
\item[$\mathrm{(iii)}$]
The trajectory
$\sigma=\Phi([0,\tau_1],\bar{a})$,
where $\tau_1>0$,
lies in $\mathbb{B}(a_0,\Delta)$
and is rectifiable and not self-intersecting.
\end{enumerate}
Let $\iota_\epsilon:K\to \A_\epsilon^{\IN}$
be a smooth parameterization of $\A_\epsilon^{\IN}$
for $\epsilon\in [0,\epsilon_0]$,
where $K$ is an $\ell$-dimensional manifold.
Let $\bar{x}\in \A_0\cap \gamma$
be the pre-image of $\bar{a}$ along \eqref{fast_bc}
and $\bar{k}\in K$ be the pre-image of $\bar{x}$ by $\iota_0$.

If $\A^{\OUT}$ is an $n$-dimensional manifold
that intersects transversally at an interior point of $\sigma$,
then there is an open neighborhood $V$ of $\bar{k}$ in $K$
such that the transition map
$\Pi_{\epsilon\,\A_\epsilon^{\IN}}^{\A^{\OUT}}$
from $\iota_\epsilon(V)\subset \A_\epsilon^{\IN}$ 
to $\A^{\OUT}$
following the flow of \eqref{deq_bc}
is well defined
for all sufficiently small $\epsilon>0$.
Moreover,
\begin{equation}\label{C1_EL}
  \|
    \Pi_{\epsilon\,\A_\epsilon}^{\A^{\OUT}}
    \circ
    \iota_\epsilon
    - 
    \Pi_{0\Lambda}^{\A^{\OUT}}
    \circ
    \Pi_{0\,\A_0}^{\Lambda}
    \circ
    \iota_{0}
  \|_{C^1(V)}
  = O(\epsilon)
\end{equation} as $\epsilon\to 0$,
where $\Pi_{0\,\A_0}^{\Lambda}$ is
the transition map 
from $\A_0$ to $\Lambda$
along the flow of \eqref{fast_bc},
and $\Pi_{0\Lambda}^{\A^{\OUT}}$ is
the transition map 
from $\Lambda$ to $\A^{\OUT}\cap \{b=0\}$
along the flow of \eqref{slow_bc}.
\end{lemma}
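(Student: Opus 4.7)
The plan is to mimic the three-chart decomposition used in the proof of Theorem~\ref{thm_main1}, adapted to the abstract setting of \eqref{deq_bc}. I would split any orbit starting in $\iota_\epsilon(V)$ into three successive segments: an \emph{entry phase} in which the orbit approaches a small tubular neighborhood of $\{b=0\}$ along the fast dynamics near $\gamma$; a \emph{slow drift phase} along the attracting manifold $\{b=0\}$ from a cross-section over $\Lambda$ up to the vicinity of $\A^{\OUT}$; and a short \emph{exit phase} matching with $\A^{\OUT}$. On each segment I would establish that the associated transition map is $O(\epsilon)$-close in $C^1$ to its limiting counterpart, and then compose the three estimates to obtain \eqref{C1_EL}.

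For the entry phase, on the time scale $t=O(1)$ system \eqref{deq_bc} is a regular perturbation of \eqref{fast_bc}. Since $(\bar a,0)$ is the $\omega$-limit of $\gamma$, hypothesis~(i) together with continuous dependence on parameters gives, for any fixed small $\delta>0$, an $O(\epsilon)$-$C^1$ approximation of the orbit through $\iota_\epsilon(k)$ at its first passage through the cross-section $\{b=\delta\}$; by hypothesis~(ii) this map is a small perturbation of $\Pi_{0\,\A_0}^{\Lambda}\circ\iota_0$ in a neighborhood of $\bar k$. For the slow drift phase, I would introduce the chart $\zeta=\epsilon\ln(\delta/b)$, in which \eqref{deq_bc} becomes
\begin{equation*}
\dot a=\epsilon f(a,b,\epsilon)+b\,h(a,b,\epsilon),\qquad
\dot\zeta=-\epsilon\,g(a,b,\epsilon),\qquad
b=\delta\,e^{-\zeta/\epsilon}.
\end{equation*}
In slow time $\tau=\epsilon t$, and in any region where $\zeta$ is bounded away from $0$, this is an $O(\epsilon+e^{-C/\epsilon})$ perturbation of
\begin{equation*}
\frac{d a}{d\tau}=f(a,0,0),\qquad \frac{d\zeta}{d\tau}=-g(a,0,0),
\end{equation*}
whose $a$-component is precisely \eqref{slow_bc}. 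Since $\sup g<0$ by \eqref{cond_rho}, $\zeta$ is strictly increasing along the limiting flow, and the rectifiability in hypothesis~(iii) ensures the relevant arc is covered in a bounded $\tau$-interval. Standard regular perturbation on the rescaled system then delivers the required $C^1$ estimate and reproduces $\Pi_{0\Lambda}^{\A^{\OUT}}$ up to a last cross-section transverse to $\sigma$ inside $\{b\approx 0\}$.

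The exit phase is a standard regular perturbation argument near the transversal intersection of $\sigma$ with $\A^{\OUT}$: the implicit function theorem absorbs the residual $b$-coordinate into an additional $O(\epsilon)$ correction. The step I expect to be the main obstacle is the $C^1$ bookkeeping in the slow drift phase, where one must control the variational equation of \eqref{deq_bc} along trajectories whose original-time length is of order $1/\epsilon$. The essential mechanism is that the $b$-component of any tangent vector is contracted at rate $|g|\ge \rho>0$, so fast tangent components are damped like $e^{-\rho\tau/\epsilon}$ while slow tangent directions evolve under an $O(1)$ variational system; the chart $\zeta=\epsilon\ln(\delta/b)$ repackages this super-exponential contraction as an $O(\epsilon)$ correction in the $C^1$-norm, in direct parallel with the role played by Proposition~\ref{prop_K3m1} in the proof of Theorem~\ref{thm_main1}. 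Uniformity of all estimates in $k$ over a compact neighborhood of $\bar k$ then follows from the smoothness of $f,g,h$ and of $\iota_\epsilon$, and concatenating the three phases yields \eqref{C1_EL}.
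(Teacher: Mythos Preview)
Your three-phase decomposition mirrors Propositions~\ref{prop_K1m1}--\ref{prop_K2m1} in the proof of Theorem~\ref{thm_main1}, but that is precisely the problem: in the paper's architecture, the passage through the corner (from the section $\{b=\delta\}$ with $\delta>0$ fixed down to the region where your variable $\zeta=\epsilon\ln(\delta/b)$ is bounded away from~$0$) is the content of Proposition~\ref{prop_K2m1}, and that proposition is proved \emph{by invoking Lemma~\ref{lem_EL}}. Your slow-drift argument explicitly assumes $\zeta$ is bounded away from~$0$, yet the phase begins at $\zeta=0$; in the intervening layer one has $b/\epsilon=\delta e^{-\zeta/\epsilon}/\epsilon$, which is large, so the rescaled system is \emph{not} an $O(\epsilon)$ perturbation of the limiting slow flow there. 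A direct estimate shows the residual $\int b\,h\,dt$ contributes an $O(\delta)$ displacement in $a$ (not $O(\epsilon)$), and this is exactly the drift along fast fibers that the limiting composition $\Pi_{0\Lambda}^{\A^{\OUT}}\circ\Pi_{0\,\A_0}^{\Lambda}$ must absorb. Citing Proposition~\ref{prop_K3m1} as the parallel is misleading: that proposition starts at $\Ahat_i^{\OUT}$, which already sits at $\zeta\ge C>0$, after Proposition~\ref{prop_K2m1} (hence Lemma~\ref{lem_EL}) has done the hard work.

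The paper avoids this circularity by a genuinely different route: it first passes to a Fenichel-type normal form in which the $a$-equation becomes $\dot a=\epsilon f(a,\epsilon)$ with no $b\,h$ term, so the slow variable is completely decoupled from~$b$ and the corner difficulty evaporates. In those coordinates $\A_\epsilon^{\IN}$ is written as a graph $b=\beta_\epsilon(a)$ over $\Lambda$, and the $C^1$ estimate~\eqref{C1_EL} is then obtained by appealing to the General Exchange Lemma of Schecter~\cite{Schecter:2008b} together with the implicit function theorem at the transversal section $\A^{\OUT}$. To salvage your approach you would need either to reproduce that straightening step, or to carry out a direct corner analysis (tracking both the solution and its variational equation across the $b\in[\delta e^{-C/\epsilon},\delta]$ layer), which is essentially reproving the Exchange Lemma rather than bypassing it.
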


\begin{proof}[Proof of Lemma~\ref{lem_EL}]
Using a Fenichel type coordinate (see Jones~\cite{Jones:1995}),
in the open ball $\mathbb B(0,2\Delta)$ in the $(a,b)$-space,
for sufficiently small $\Delta>0$
we can choose an $\epsilon$-dependent
change of variable $(a,b)\mapsto (\tilde{a},\tilde{b})$
with \begin{equation*}
  (\tilde{a},\tilde{b})\big|_{b=0}
  =(a,0)
\end{equation*}
such that,
after dropping the tilde symbol, 
system \eqref{deq_bc} is converted to
\begin{equation}\label{deq_bc_fenichel}\begin{aligned}
  &\dot{a}= \epsilon f(a,\epsilon),
  \\
  &\dot{b}= b\,g(a,b,\epsilon).
\end{aligned}\end{equation}
We write \begin{equation*}
  \A_\epsilon^{\IN}
  =\{({a},{b}): {a}\in \Lambda, b=\beta_\epsilon(a)\}.
\end{equation*}
Since $\A^{\OUT}$ intersects $\sigma$ transversally,
for some neighborhood $U$ of $\bar{a}$ in $\mathbb R^n$,
we can write \begin{equation*}
  \Pi_{0\Lambda}^{\A^{\OUT}}(a)
  =\Phi(a,T_0(a))
  \quad\forall\;a\in \Lambda\cap U,
\end{equation*}
where $T_0$ is a smooth function
with $\tau_-<T_0<\tau_+$
for some $\tau_-,\tau_+\in (0,\tau_1)$.
To prove \eqref{C1_EL},
it suffices to show that
\begin{equation}\label{est_bc_Pi}
  \left\|
  \Pi_{\epsilon \A_{\epsilon}^{\IN}}^{\A^{\OUT}}(a,\beta_\epsilon(a))
  -\big(
    \Phi(a,T_0(a)),0
  \big)
  \right\|_{C^1(\Lambda\cap U)}
  =O(\epsilon)
\end{equation} as $\epsilon\to 0$.
Let $(a_{\epsilon},b_{\epsilon})(t;a_0)$
be the solution of \eqref{deq_bc_fenichel}
at time $t$ with initial data $(a_0,\beta_\epsilon(a_0))$.
Define \begin{equation}\label{def_bc1}
  (a_{\epsilon1},b_{\epsilon1})(a_0,\tau)
  =(a_\epsilon,b_\epsilon)
  (\tau/\epsilon; a_0)
  \quad\text{for }\;
  a_0\in \Lambda_1,
  \tau\in [\tau_{-},\tau_{+}].
\end{equation}
By the General Exchange Lemma
(see Schecter~\cite{Schecter:2008b}),
\begin{equation}\label{est_bc_Phi}
  \left\|
    (a_{\epsilon1},b_{\epsilon1})(a_0,\tau)
    - \big(\Phi(a_0,\tau),0\big)
  \right\|_{C^1(\Lambda_1\times [\tau_{-},\tau_{+}])}
  = O(\epsilon)
\end{equation} as $\epsilon\to 0$.
Since the graph of $\big(\Phi(a_0,\tau),0\big)$
is transversal to $\A^{\OUT}$,
it follows from the Implicit Function Theorem
that there exists a function $T_\epsilon(a_0)$
defined for all small $\epsilon>0$
such that \begin{equation}\label{est_bc_T}
  \|T_\epsilon-T_0\|_{C^1(\Lambda\cap U)}
  =O(\epsilon)
\end{equation} and \begin{equation*}
  (a_{\epsilon1},b_{\epsilon1})(a_0,T_\epsilon(a_0))
  \in \A^{\OUT}
  \quad\forall\; a_0\in\Lambda\cap U.
\end{equation*}
Note that the last relation
means \begin{equation}\label{trans_bc_Pi}
  \Pi_{\epsilon \A^{\IN}}^{\A^{\OUT}}(a_0,\delta)
  =(a_{\epsilon1},b_{\epsilon1})(a_0,T_\epsilon(a_0)).
\end{equation}
From \eqref{est_bc_Phi}, \eqref{est_bc_T} and \eqref{trans_bc_Pi}
we then obtain \eqref{est_bc_Pi}.
\end{proof}

\begin{proof}[Proof of Proposition~\ref{prop_K2m1}]
Note that \eqref{deq_bc}
can be written as \begin{equation*}\begin{aligned}
  &\dot{a}= \epsilon f(a,b,0)+b\,h(a,b,0)+ O(|(a,b)|^2),
  \\
  &\dot{b}= b\,g(a,b,0)+ O(|(a,b)|^2).
\end{aligned}\end{equation*}
For system \eqref{deq_k2m1},
setting $s=z-z_i$ yields \begin{equation*}\begin{aligned}
  &\dot{p}
  =\epsilon f(p,z_i+s,0)
  + O(|(\epsilon,s)|^2),
  \\[.2em]
  &\dot{s}
  = s \frac{\partial g}{\partial z}(p,z_i,0)
  + O(|(\epsilon,s)|^2),
  \\[.2em]
  &\dot{\zeta}
  =\epsilon
  \frac{\partial g}{\partial z}(p,z_i,0)
  + O(|(\epsilon,s)|^2)
\end{aligned}\end{equation*}
as $(\epsilon,s)\to 0$.
Since $\frac{\partial g}{\partial z}(A_i,z_i,0)<0$
by Assumption~\ref{hyp_int},
applying Lemma~\ref{lem_EL}
with $b=z$ and $a=(p,s)$
we obtain \eqref{trans_K2m1}.
\end{proof}

Also we denote
$\Pi_{0\Gamma_i}^{\A_i}$
the transition map from $\Gamma_i$ to $\A_i\times \{z_i\}$
along the flow of \eqref{fast_pz}
and $\Pi_{0\,\Ahat_i}^{\widehat{\Sigma}_i}$
the transition map from ${0\,\Ahat_i}$ to ${\widehat{\Sigma}_i}$
along the flow of \eqref{slow_k3m1}.

\begin{proposition}
There exist $\delta_j>0$, $0\le j\le 4$, such that
if $\Gamma_i$, $\A_i$, $\Sigma_i$
are defined in the preceding paragraphs,
then the transition map $\Pi_{\epsilon\Gamma_i}^{{\Sigma}_i}$
from $\Gamma_i$ to ${\Sigma}_i$
following the flow of \eqref{deq_pz} is well-defined
for all small $\epsilon>0$,
and
\begin{equation}\label{trans_Am1}
  \left\|
    \kappa_{\epsilon i}^{(31)}
    \circ
    \Pi_{\epsilon\Gamma_i}^{\Sigma_i}
    -
    \Pi_{0\,\Ahat_i}^{\widehat{\Sigma}_i}
    \circ \kappa_{0 i}^{(31)}
    \circ \Pi_{0\Gamma_i}^{\A_i}
  \right\|_{C^1(\Gamma_i)}
  =O(\epsilon)
\end{equation}
as $\epsilon\to 0$.
\label{prop_trans_Am1}
\end{proposition}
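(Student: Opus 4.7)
The plan is to decompose $\Pi_{\epsilon\Gamma_i}^{\Sigma_i}$ along the singular arc $\gamma_i\cup\sigma_i$ into three consecutive transition maps, one for each of the three regimes treated in Propositions~\ref{prop_K1m1}, \ref{prop_K2m1} and \ref{prop_K3m1}, and then to glue the resulting $C^1$ estimates via the chain rule. First I would fix $\delta_4>0$ small enough so that $\sigma_i$ hits $\Sigma_i$ transversally from every $p_0\in\A_i$, and then successively shrink $\delta_3$, the pair $\delta_1,\delta_2$, and $\delta_0$ so that the hypotheses of Propositions~\ref{prop_K3m1}, \ref{prop_K2m1} and \ref{prop_K1m1}, respectively, are satisfied. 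For all sufficiently small $\epsilon>0$, every forward trajectory of \eqref{deq_pz} starting on $\Gamma_i$ then visits $\A_i^{\IN}$, $\Atilde_{\epsilon i}^{\OUT}$, and $\Sigma_i$ in succession, giving the factorization
\begin{equation*}
  \kappa_{\epsilon i}^{(31)}\circ \Pi_{\epsilon\Gamma_i}^{\Sigma_i}
  =\Pi_{\epsilon\,\Ahat_i^{\OUT}}^{\widehat{\Sigma}_i}
  \circ \kappa_{\epsilon i}^{(32)}
  \circ \Pi_{\epsilon\,\Atilde_{\epsilon i}^{\IN}}^{\Atilde_{\epsilon i}^{\OUT}}
  \circ \kappa_{\epsilon i}^{(21)}
  \circ \Pi_{\epsilon\Gamma_i}^{\A_i^{\IN}}.
\end{equation*}

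Next I would apply Proposition~\ref{prop_K1m1} to the rightmost factor, Proposition~\ref{prop_K2m1} to the middle two-term block $\Pi_{\epsilon\,\Atilde_{\epsilon i}^{\IN}}^{\Atilde_{\epsilon i}^{\OUT}}\circ \kappa_{\epsilon i}^{(21)}$, and Proposition~\ref{prop_K3m1} to the leftmost factor. Each of the three yields a uniform $O(\epsilon)$ bound in the $C^1$ norm on a common compact domain against the corresponding $\epsilon=0$ map. Because each limiting factor is $C^1$ on its compact domain, and hence Lipschitz together with its derivative there, a standard chain-rule argument propagates the individual $O(\epsilon)$ estimates to the composition, showing that the full expression above is $O(\epsilon)$-close in $C^1(\Gamma_i)$ to
\begin{equation*}
  \Pi_{0\,\Ahat_i^{\OUT}}^{\widehat{\Sigma}_i}
  \circ \kappa_{0i}^{(32)}
  \circ \Pi_{0\,\Atilde_{0i}}^{\Atilde_{0i}^{\OUT}}
  \circ \Pi_{0\,\Atilde_{0i}^{\IN}}^{\Atilde_{0i}}
  \circ \kappa_{0i}^{(21)}
  \circ \Pi_{0\Gamma_i}^{\A_i^{\IN}}.
\end{equation*}

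Finally I would identify this limit composition with the right-hand side of \eqref{trans_Am1}. By construction, $\kappa_{0i}^{(21)}$ is the trivial inclusion $(p,z)\mapsto(p,z,0)$ and $\kappa_{0i}^{(32)}$ is the corresponding projection; $\Pi_{0\,\Atilde_{0i}^{\IN}}^{\Atilde_{0i}}$ is the fast-flow projection of $\A_i^{\IN}$ onto $\A_i\times\{z_i\}$; and $\Pi_{0\,\Atilde_{0i}}^{\Atilde_{0i}^{\OUT}}$ agrees with $\widehat{\Phi}_i(\cdot,\delta_3)$ after dropping the trivial $z=z_i$ component. Consequently the middle three maps collapse to $\kappa_{0i}^{(31)}\circ \Pi_{0\Gamma_i}^{\A_i}$, and \eqref{trans_Am1} follows.

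The main obstacle is bookkeeping through the $\epsilon$-dependent chart $\kappa_{\epsilon i}^{(21)}$, whose $\zeta$-component $\epsilon\ln(\omega_i/(z-z_i))$ is formally singular at $z=z_i$ as $\epsilon\to 0$. The saving grace is that on $\A_i^{\IN}$ one has $z-z_i=\omega_i\delta_1$ with $\delta_1>0$ fixed, so $\kappa_{\epsilon i}^{(21)}$ is smooth there and differs from $\kappa_{0i}^{(21)}$ by $O(\epsilon)$ in $C^1$ (its $\zeta$-component equals the constant $\epsilon\ln\delta_1$), which is exactly what is needed to chain Proposition~\ref{prop_K2m1} to Proposition~\ref{prop_K1m1}. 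An analogous quantitative statement for $\kappa_{\epsilon i}^{(32)}$ at $\Ahat_i^{\OUT}$, where $\zeta$ is bounded away from $0$, handles the junction with Proposition~\ref{prop_K3m1}, and the chain-rule argument then delivers the stated estimate.
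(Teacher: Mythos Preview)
Your proposal is correct and follows essentially the same route as the paper: factor $\Pi_{\epsilon\Gamma_i}^{\Sigma_i}$ through $\A_i^{\IN}$ and $\Atilde_{\epsilon i}^{\OUT}$, apply Propositions~\ref{prop_K1m1}, \ref{prop_K2m1}, \ref{prop_K3m1} to the three factors, and then collapse the limiting composition to $\Pi_{0\,\Ahat_i}^{\widehat{\Sigma}_i}\circ\kappa_{0i}^{(31)}\circ\Pi_{0\Gamma_i}^{\A_i}$ using the chart identities. One small bookkeeping point: Propositions~\ref{prop_K3m1} and~\ref{prop_K2m1} both require $\delta_4$ to be chosen small \emph{after} $\delta_1,\delta_2,\delta_3$ are fixed, so the order in which you select the $\delta_j$ should be reversed (the paper fixes $\delta_1,\delta_2,\delta_3\in(0,\Delta)$ first and then picks $\delta_0,\delta_4$), but this does not affect the substance of the argument.
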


\begin{proof}[Proof of Proposition \ref{prop_trans_Am1}]
First we fix constants
$\delta_1, \delta_2$ and $\delta_3$
in $(0,\Delta)$,
where $\Delta$ is the numbers
in Propositions~\ref{prop_K2m1}.
Then we choose positive constants $\delta_0$ and $\delta_4$,
such that the results in Propositions~\ref{prop_K1m1} and \ref{prop_K3m1} hold.
Then
\begin{equation*}\begin{aligned}
  \Pi_{\epsilon\Gamma_i}^{\Sigma_i}
  &=\Pi_{\epsilon\,\A_i^{\OUT}}^{\Sigma_i}
  \circ
  \Pi_{\epsilon\,\A_i^{\IN}}^{\A_i^{\OUT}}
  \circ
  \Pi_{\epsilon\Gamma_i}^{\A_i^{\IN}}
  \\
  &=
  \big(
    \kappa_{\epsilon i}^{(13)}
    \circ
    \Pi_{\epsilon\,\Ahat_i^{\OUT}}^{\widehat{\Sigma}_i}
    \circ
    \kappa_{\epsilon i}^{(31)}
  \big)
  \circ
  \big(
    \kappa_{\epsilon i}^{(12)}
    \circ
    \Pi_{\epsilon\,\Atilde_{\epsilon i}^{\IN}}^{\Atilde_{\epsilon i}^{\OUT}}
    \circ
    \kappa_{\epsilon i}^{(21)}
  \big)
  \circ
  \Pi_{\epsilon\Gamma_i}^{\A_i^{\IN}}.
\end{aligned}\end{equation*}
From Propositions~\ref{prop_K1m1},~\ref{prop_K3m1} and~\ref{prop_K2m1},
it follows that
\begin{equation*}\begin{aligned}
  \Pi_{\epsilon\Gamma_i}^{\Sigma_i}
  &=
  \left(
    \kappa_{0 i}^{(13)}
    \circ
    \Pi_{0\,\Ahat_i^{\OUT}}^{\widehat{\Sigma}_i}
    \circ
    \kappa_{0 i}^{(31)}
  \right)
  \circ
  \left(
    \kappa_{0i}^{(12)}
    \circ
    \Pi_{0\,\Atilde_i}^{\Atilde_i^{\OUT}}
    \circ
    \Pi_{0\,\Atilde_{0 i}^{\IN}}^{\Atilde_{0i}}
    \circ \kappa_{0i}^{(21)}
  \right)
  +O(\epsilon)
  \\
  &=
  \kappa_{0 i}^{(13)}
  \circ
  \left(
    \Pi_{0\,\Ahat_i^{\OUT}}^{\widehat{\Sigma}_i}
    \circ
    \kappa_{0 i}^{(32)}
    \circ
    \Pi_{0\,\Atilde_i}^{\Atilde_i^{\OUT}}
  \right)
  \circ
  \left( 
    \Pi_{0\,\Atilde_{0 i}^{\IN}}^{\Atilde_{0i}}
    \circ \kappa_{0i}^{(21)}
    \circ
    \Pi_{0\Gamma_i}^{\A_i^{\IN}}
  \right)
  +O(\epsilon).
\end{aligned}\end{equation*}
Since \begin{equation*}
  \Pi_{0\,\Ahat_i^{\OUT}}^{\widehat{\Sigma}_i}
  \circ
  \kappa_{0 i}^{(32)}
  \circ
  \Pi_{0\,\Atilde_i}^{\Atilde_i^{\OUT}}
  =
  \Pi_{0\,\Ahat_i}^{\widehat{\Sigma}_i}
  \circ
  \kappa_{0i}^{(23)}
\end{equation*}
and \begin{equation*}
  \Pi_{0\,\Atilde_{0 i}^{\IN}}^{\Atilde_{0i}}
  \circ \kappa_{0i}^{(21)}
  \circ
  \Pi_{0\Gamma_i}^{\A_i^{\IN}}
  =
  \kappa_{0i}^{(21)}
  \circ
  \Pi_{0\Gamma_i}^{\A_i},
\end{equation*}
it follows that \begin{equation*}\begin{aligned}
  \Pi_{\epsilon\Gamma_i}^{\Sigma_i}
  &=
  \kappa_{0 i}^{(13)}
  \circ
  \left(
    \Pi_{0\,\Ahat_i}^{\widehat{\Sigma}_i}
    \circ
    \kappa_{0i}^{(32)}
  \right)
  \circ
  \left(
    \kappa_{0i}^{(21)}
    \circ
    \Pi_{0\Gamma_i}^{\A_i}
  \right)
  + O(\epsilon)
  \\
  &=
  \kappa_{0 i}^{(13)}
  \circ
  \Pi_{0\,\Ahat_i}^{\widehat{\Sigma}_i}
  \circ
  \kappa_{0i}^{(31)}
  \circ
  \Pi_{0\Gamma_i}^{\A_i}
  + O(\epsilon).
\end{aligned}\end{equation*}
Applying both sides of equation by $\kappa_{0i}^{(31)}$
yields \eqref{trans_Am1}.
\end{proof}

\begin{proof}[Proof of Theorem~\ref{thm_main1}]
By a reversal of the time variable,
applying Proposition \ref{prop_trans_Am1} we obtain
\begin{equation*}
  \left\|
    \kappa_{\epsilon i}^{(31)}
    \circ
    \Pi_{\epsilon \Gamma_{i+1}}^{\Sigma_i}
    -
    \Pi_{0\Bhat_i}^{\widehat{\Sigma}_i}
    \circ 
    \kappa_{0i}^{(31)}
    \circ \Pi_{0\Gamma_{i+1}}^{\B_i}
  \right\|_{C^1(\Gamma_{i+1})}
  =O(\epsilon).
\end{equation*}
Taking the inverse of the mappings we obtain \begin{equation}\label{trans_Bm1}
  \left\|
    \Pi_{\epsilon\Gamma_i}^{\Sigma_i}
    \circ
    \kappa_{\epsilon i}^{(13)}
    -
    \Pi_{0\B_i}^{\Gamma_{i+1}}
    \circ 
    \kappa_{0i}^{(13)}
    \circ
    \Pi_{0{\widehat{\Sigma}_i}}^{\Bhat_i}
  \right\|_{C^1(\widehat{\Sigma}_i)}
  =O(\epsilon).
\end{equation}
By \eqref{trans_Am1} and \eqref{trans_Bm1}, 
it follows that
\begin{equation}\label{Pi_Gamma_ii1}\begin{aligned}
  \Pi_{\epsilon\Gamma_i}^{\Gamma_{i+1}}
  &=\left(\Pi_{\epsilon\Gamma_i}^{\Sigma_i}\circ \kappa_{i\epsilon}^{(13)}\right)
  \circ
  \left(\kappa_{i\epsilon}^{(31)}\circ \Pi_{\epsilon\Gamma_i}^{\Sigma_i}\right)
  \\
  &=\left(
    \Pi_{0\B_i}^{\Gamma_{i+1}}
    \circ 
    \kappa_{0i}^{(13)}
    \circ
    \Pi_{0{\widehat{\Sigma}_i}}^{\Bhat_i}
  \right)
  \circ
  \left(
    \Pi_{0\,\Ahat_i}^{\widehat{\Sigma}_i}
    \circ \kappa_{0 i}^{(31)}
    \circ \Pi_{0\Gamma_i}^{\A_i}
  \right)
  +O(\epsilon)
  \\
  &= 
  \Pi_{0\B_i}^{\Gamma_{i+1}}
  \circ 
  \kappa_{0i}^{(13)}
  \circ
  \Pi_{0\,\Ahat_i}^{\Bhat_i}
  \circ \kappa_{0 i}^{(31)}
  \circ \Pi_{0\Gamma_i}^{\A_i}
  +O(\epsilon).
\end{aligned}\end{equation}
Define $\varrho(p,z)=p$.
Since we assumed $h=0$ in \eqref{fast_pz}, \begin{equation*}
  \varrho\circ
  \Pi_{\epsilon\B_i}^{\A_{i+1}}(p,z)
  =p
  \quad\forall\;(p,z)\in \B_i.
\end{equation*}
Hence \eqref{Pi_Gamma_ii1} implies that \begin{equation*}
  \varrho\circ \Pi_{\epsilon\Gamma_i}^{\Gamma_{i+1}}
  =
  Q_i
  +O(\epsilon).
\end{equation*}
Let \begin{equation*}
  P_\epsilon
  =\Pi_{\epsilon\Gamma_{N}}^{\Gamma_1}
  \circ
  \cdots
  \circ
  \Pi_{\epsilon\Gamma_2}^{\Gamma_3}
  \circ
  \Pi_{\epsilon\Gamma_1}^{\Gamma_2}.
\end{equation*}
Then \begin{equation*}
  \varrho
  \circ
  P_\epsilon
  = Q_N\circ \cdots \circ Q_2\circ Q_1
  + O(\epsilon)
  = P+ O(\epsilon),
\end{equation*}
where $P$ is defined by \eqref{def_P}.
Since the $z$-component on $\Gamma_1$ is a constant,
we conclude that \begin{equation*}
  \det\left(
  P_\epsilon
  -\mathrm{id}
  \right)
  =
  \det\left(
    DP
    -\mathrm{id}
  \right)
  + O(\epsilon).
\end{equation*}
Hence the linearization of
the return map at $p_{01}\in \Gamma_1$
does not have a singular value equal to $1$
for all small $\epsilon>0$
if $\det(DP-\mathrm{id})\ne 0$.
Consequently,
for all small $\epsilon>0$
there exists a locally unique 
fixed point $(p_{\epsilon 1},z_{\epsilon 1})\in \Gamma_i$
of $P_\epsilon$.
The trajectory passing through $(p_{\epsilon 1},z_{\epsilon 1})$
is a periodic orbit of system \eqref{deq_pz}.
If the spectrum radius of $DP(p_{01},\zeta_1)$
is smaller (resp.\ greater) than $1$,
then $P_\epsilon$ is a contraction (resp.\ expansion),
hence the periodic orbit is orbitally asymptotically stable (resp.\ unstable).
This proves the theorem.
\end{proof}

\subsection{Proof of Theorem~\ref{thm_main3}}
\label{sec_proof_main3}
The approach in this section is to generalize
the proof of Theorem~\ref{thm_main1}.
Some notations to be used are listed in Table~\ref{table_symbols2}.

\begin{table}[htbp]
\caption{Notations in Section~\ref{sec_proof_main3}.}
\centering
\begin{tabular}{|l|l|l|}
\hline
Variables
& Charts
& Objects
\\
\hline
$(p,z)\in \Omega\subset \mathbb R^n\times \mathbb R^m$
&
$\kappa_{\epsilon i}^{(01)}(p,q,\widehat{\zeta})=(p,z)$
&
$\Omega$,
$\overline{\Gamma}_i$
\\
\quad\text{with }$z^{(j)}\in (\zmin^{(j)},\zmax^{(j)})$
&
$\kappa_{\epsilon i}^{(03)}(p,\zeta)=(p,z)$
&
\\
\hline
$p\in \mathbb R^m$
&
&
$\A_i$, $\B_i$
\\
\hline
$(p,q,\widehat{\zeta})$
&
$\kappa_{\epsilon i}^{(10)}(p,z)=(p,q,\widehat{\zeta})$
&
$\Gamma_i$,
\\
$\quad\in
\mathbb R^n\times \big(\zmin^{(j)},\zmax^{(j)}\big)\times \mathbb R^{m-1}_+$
&
$\kappa_{\epsilon i}^{(12)}(p,q,\zeta)=(p,z,\widehat{\zeta})$
&
$\A_i^{\IN}$, 
$\A_i^{\OUT}$
\\
\hline
$(p,q,\zeta)$
&
$\kappa_{\epsilon i}^{(21)}(p,q,\widehat{\zeta})=(p,q,\zeta)$
&
$\Atilde_i$, 
$\Atilde_i^{\IN}$, 
$\Atilde_i^{\OUT}$
\\
$\quad\in
\mathbb R^n\times \big(\zmin^{(j)},\zmax^{(j)}\big)\times \mathbb R^{m}_+$
&
$\kappa_{\epsilon i}^{(23)}(p,\zeta)=(p,q,\zeta)$
&
\\
\hline
$(p,\zeta)\in \mathbb R^n\times \mathbb R^m_+$
&
$\kappa_{\epsilon i}^{(30)}(p,z)=(p,\zeta)$
&
$\Ahat_i^{\OUT}$, 
$\widehat{\Sigma}_i$
\\
&
$\kappa_{\epsilon i}^{(32)}(p,q,\zeta)=(p,\zeta)$
&
\\
\hline
\end{tabular}
\label{table_symbols2}
\end{table}

Let \begin{equation*}
  \Omega=
  \mathbb R^n\times \left(\zmin^{(1)},\zmax^{(1)}\right)
  \times \cdots
  \times \left(\zmin^{(N)},\zmax^{(N)}\right)
  \subset \mathbb R^n\times \mathbb R^m.
\end{equation*}
We define the $\epsilon$-dependent chart on $\Omega$ by \begin{equation*}
  \kappa_{\epsilon i}^{(10)}(p,z)
  =(p,z^{(J_i)},\widehat{\zeta})
  \quad\text{with}\quad
  \widehat{\zeta}^{(j)}= \begin{cases}
    \zeta_i^{(J_i)},&\text{if }j=J_i,
    \\
    \epsilon \ln\frac{\omega_i^{(j)}}{z^{(j)}-z_{i}^{(j)}},
    &\text{if }j\ne J_i.
  \end{cases}
\end{equation*}
On the curve $(p_i(t),q_i(t))\subset \mathbb R^m\times \mathbb R$
in Assumption~\ref{hyp_fast},
since $q_i(t)$ is non-constant,
we can choose a point $(p_{0i},q_{0i})$
at which $q_i'(t)\ne 0$.
Let \begin{equation}\notag
  \Gamma_i
  =\left\{
    (p,q,\widehat{\zeta})\in 
    \mathbb R^n\times \mathbb R
    \times \Lambda_i:
    |p-p_{0i}|<\delta_0,\;
    q=q_{0i},\;
    |\widehat{\zeta}-\zeta_{i}|<\delta_0
  \right\},
\end{equation}
where $\delta_0>0$ is to be determined.
Let $\overline{\Gamma}_i=\kappa_{\epsilon 1}^{(01)}(\Gamma_i)$.
Our strategy is to track the transition map from
$\Gamma_i$
to $\Gamma_{i+1}$
in the $(p,q,\widehat{\zeta})$-space
to find a fixed point of a composition map
from $\Gamma_1$ to $\Gamma_1$
and then covert it back via $\kappa_{\epsilon 1}^{(01)}$
to obtain a periodic orbit
passing through $\overline{\Gamma}_1$
in the $(p,z)$-space.

Let \begin{equation}\notag
  \A_i^{\IN}
  =\{(p,q,\widehat{\zeta}):
  p\in \mathbb{B}(p_i^{\IN},\delta_2),\;
  q=z_i^{(J_i)}+\omega_i\delta_1,\;
  |\widehat{\zeta}-\widehat{\zeta}_i|<\delta_2
  \}.
\end{equation}
where $\delta_1$ and $\delta_2$ are positive constants to be determined.

\begin{proposition}
Let $\Gamma_i$ and $\A_i^{\IN}$ be defined
as in the preceding paragraphs.
For fixed $\delta_1>0$ and $\delta_2>0$,
if $\delta_0>0$ is sufficiently small,
then the transition map
$\Pi_{\epsilon \Gamma_i}^{\A_i^{\IN}}$
from $\Gamma_i$
to $\A_i^{\IN}$
following the flow of \eqref{deq_pz}
is well-defined
for all small $\epsilon\ge 0$
and is $O(\epsilon)$-close
to $\Pi_{0 \Gamma_i}^{\A_i^{\IN}}$
in the $C^1(\Gamma_i)$-norm
as $\epsilon\to 0$.
\label{prop_K1}
\end{proposition}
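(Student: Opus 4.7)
The plan is to write \eqref{deq_pz} in the $\epsilon$-dependent chart $\kappa_{\epsilon i}^{(10)}$ and show that the resulting $(p,q,\widehat{\zeta})$-system is a regular perturbation of its $\epsilon\to 0$ limit on a neighborhood of the portion of $\gamma_i$ between $\Gamma_i$ and $\A_i^{\IN}$. The conclusion will then follow from standard regular perturbation theory, in direct analogy with Proposition~\ref{prop_K1m1} in the scalar case.

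In detail, setting $q=z^{(J_i)}$, the $\dot p$ and $\dot q$ equations are those of \eqref{deq_pz} with $z^{(j)}$ replaced by $z_i^{(j)}+\omega_i^{(j)}\exp(-\widehat{\zeta}^{(j)}/\epsilon)$ for $j\ne J_i$, while differentiating the defining relation for $\widehat{\zeta}^{(j)}$ gives
\[
  \dot{\widehat{\zeta}}^{(j)}
  =-\epsilon\,\omega_i^{(j)}\,\frac{g^{(j)}(p,z,\epsilon)}{z^{(j)}-z_i^{(j)}},
  \qquad j\ne J_i.
\]
By Assumption~\ref{hyp_cmin}, $g^{(j)}$ vanishes on $\{z^{(j)}=z_i^{(j)}\}$, so Taylor expansion yields $g^{(j)}/(z^{(j)}-z_i^{(j)})=\partial g^{(j)}/\partial z^{(j)}+O(z^{(j)}-z_i^{(j)})$, and hence $\dot{\widehat{\zeta}}^{(j)}=O(\epsilon)$ uniformly on any set where $\widehat{\zeta}$ is bounded. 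Since $\widehat{\zeta}$ stays close to $\zeta_i$ with $\zeta_i^{(j)}>0$ for $j\ne J_i$ in the relevant region (as built into the setup of $\Lambda_i$ and $\Gamma_i$), the factors $\exp(-\widehat{\zeta}^{(j)}/\epsilon)$ are of order $O(e^{-c/\epsilon})$ for some $c>0$, so the chart-induced corrections to the $\dot p,\dot q$-equations are absorbed into the targeted $O(\epsilon)$ error. The $\epsilon\to 0$ limit of the transformed system is therefore \eqref{fast_pz} restricted to the invariant slice $\{z^{(j)}=z_i^{(j)}: j\ne J_i\}$, coupled with $\dot{\widehat{\zeta}}^{(j)}=0$.

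The transit from $\Gamma_i$ to $\A_i^{\IN}$ in the limiting system is simply the flow along $\gamma_i$ within that invariant slice. By Assumption~\ref{hyp_nondegen} the approach $(B_{i-1},z_{i-1})\to(A_i,z_i)$ along $\gamma_i$ is hyperbolic in the $q$-direction, so for each fixed $\delta_1,\delta_2>0$ every orbit through $\Gamma_i$ with $\delta_0$ small enough crosses $\A_i^{\IN}$ transversally in a uniformly finite time. Standard $C^1$-regular perturbation theory then gives that $\Pi_{\epsilon\Gamma_i}^{\A_i^{\IN}}$ is well-defined for all small $\epsilon\ge 0$ and satisfies $\|\Pi_{\epsilon\Gamma_i}^{\A_i^{\IN}}-\Pi_{0\Gamma_i}^{\A_i^{\IN}}\|_{C^1(\Gamma_i)}=O(\epsilon)$. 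The only technical point to watch is the $\epsilon$-dependence of the chart $\kappa_{\epsilon i}^{(10)}$ itself; this is handled by the exponential estimate above, which shows the chart-induced perturbation is $O(e^{-c/\epsilon})$ and hence negligible beside the $O(\epsilon)$ drift of the $\widehat{\zeta}^{(j)}$-components.
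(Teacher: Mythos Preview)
Your proof is correct and follows essentially the same approach as the paper: write \eqref{deq_pz} in the chart $\kappa_{\epsilon i}^{(10)}$, use that the $\widehat{\zeta}^{(j)}$-components for $j\ne J_i$ are bounded away from zero (the paper attributes this to Assumption~\ref{hyp_int}, you to the setup of $\Lambda_i$ and $\Gamma_i$, which amounts to the same thing) so that $z^{(j)}\to z_i^{(j)}$ exponentially and $\dot{\widehat{\zeta}}^{(j)}=O(\epsilon)$, conclude that the transformed system is a regular perturbation of \eqref{fast_K1}, and apply standard regular perturbation theory. Your write-up is in fact slightly more explicit than the paper's about the $O(e^{-c/\epsilon})$ estimate and the transversality at $\A_i^{\IN}$, but the argument is the same.
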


\begin{proof}
Chart $\kappa_{\epsilon i}^{(1)}$
converts system \eqref{deq_pz} to
\begin{equation}\label{deq_K1}\begin{aligned}
  &\dot{p}= \epsilon f(p,z,\epsilon)+ h(p,z,\epsilon),
  \\
  &\dot{q}= g^{(J_i)}(p,z,\epsilon),
  \\
  &\dot{\widehat{\zeta}}^{(j)}= -\epsilon\;
  \frac{g^{(j)}(p,z,\epsilon)}{z^{(j)}-z_i^{(j)}},
  \quad
  j\in \{1,2,\dots,m\}\setminus\{J_i\},
  \\
  &
  \text{with}\quad
  z^{(J_i)}=q
  \quad\text{and}\quad
  z^{(j)}=z_i^{(j)}+\omega_i^{(j)}\exp(-\widehat{\zeta}^{(j)}/\epsilon)
  \;\text{for $j\ne J_i$}.
\end{aligned}\end{equation}
By Assumption \ref{hyp_int}, 
all components of $\widehat{\zeta}_i\in \Lambda_i$
are bounded away from zero.
Therefore, 
for each $j\in \{1,2,\dots,m\}\setminus\{J_i\}$,
\begin{equation*}
  z_i^{(j)}+\omega_i^{(j)}\exp(-\widehat{\zeta}^{(j)}/\epsilon)
  \to z_i^{(j)}
  \quad\text{as $\epsilon\to 0$},
\end{equation*}
which implies
\begin{equation*}
  \frac{g^{(j)}(p,z,0)}{z^{(j)}-z_i^{(j)}}
  \to
  \frac{\partial g^{(j)}}{\partial z^{(j)}}(p,z_{i-1}+q\,{\sf e}_{J_i},0)
  \quad\text{as $\epsilon\to 0$}.
\end{equation*}
Hence the expression of
$\dot{\widehat{\zeta}}^{(j)}$
in \eqref{deq_K1}
tends to zero as $\epsilon\to 0$.
Consequently, \eqref{deq_K1} is a regular perturbation
of the system \begin{equation}\label{fast_K1}\begin{aligned}
  &\dot{p}= h(p,z_{i-1}+q\,{\sf e}_{J_{i-1}},0),
  \\
  &\dot{q}= g^{(J_i)}(p,z_{i-1}+q\,{\sf e}_{J_{i-1}},0),
  \\
  &\dot{\widehat{\zeta}}^{(j)}= 0,
  \quad
  j\in \{1,2,\dots,m\}\setminus\{J_i\}.
\end{aligned}\end{equation}
Hence $\Pi_{\epsilon \Gamma_i}^{\A_i^{\IN}}$ is well-defined
and is $O(\epsilon)$ $C^1$-close
to $\Pi_{0 \Gamma_i}^{\A_i^{\IN}}$
as $\epsilon\to 0$.
\end{proof}

We define charts $\kappa_{\epsilon i}^{(30)}$ 
for $(p,z)\in \Omega$
by \begin{equation}\notag
\begin{aligned}
  &\kappa_{\epsilon i}^{(30)}(p,z)
  =\big(p,\zeta)
  \\
  &
  \text{with}\quad
  \zeta^{(j)}=\epsilon\ln \frac{\omega_i^{(j)}}{z^{(j)}-z_i^{(j)}}
  \text{\; for }j=1,2\dots,m.
\end{aligned}\end{equation}
In this chart
system \eqref{deq_pz} 
is converted to
\begin{equation}\label{deq_K3}\begin{aligned}
  &\frac{d}{d\tau}{p}= f(p,z,\epsilon)+ h(p,z,\epsilon)/\epsilon,
  \\
  &\frac{d}{d\tau}{{\zeta}}^{(j)}= 
  \frac{-g^{(j)}(p,z,\epsilon)}{z^{(j)}-z_i^{(j)}},
  \quad
  j=1,2,\dots,m,
  \\
  &
  \quad\text{with}\quad
  z^{(j)}=z_i^{(j)}+\omega_i^{(j)}\exp(-{\zeta}^{(j)}/\epsilon)
  \;\;\text{for }j=1,2,\dots,m.
\end{aligned}\end{equation}
Let $\widehat{\Phi}_i$ be the solution operator of
\begin{equation}\label{slow_K3}\begin{aligned}
  &\frac{d}{d\tau}{p}= f(p,z_i,0),
  \\
  &\frac{d}{d\tau}{\zeta}^{(j)}=
  \frac{-\partial g^{(j)}}{\partial z^{(j)}}(p,z_i,0)
  \quad\text{for }j=1,2,\dots,m.
\end{aligned}\end{equation}
Let $\A_i$ and $\A_i^{\IN}$
be defined by \eqref{def_Ai1} and \eqref{def_Ain1}.
We define \begin{equation}\label{def_Ahat}
  \Ahat_i= \A_i\times \Lambda_i
  \quad\text{and}\quad
  \Ahat_i^{\OUT}
  =\widehat{\Phi}_i(\Ahat_i,\delta_3),
\end{equation}
where $\delta_3>0$ is a constant to be determined.
Let
$\widehat{\sigma}_i(\tau)=\widehat{\Phi}_i((A_i,\zeta_i),\tau)$,
$0\le \tau\le T_i$.
Let $\widehat{\Sigma}_i$
be a cross section of the curve $\widehat{\sigma}_i$
at $\widehat{\sigma}_i(\tau_i/2)$.
We denote $\Pi_{0\,\Ahat_i^{\OUT}}^{\widehat{\Sigma}}$
the transition map from $\Ahat_i^{\OUT}$ to $\widehat{\Sigma}_i$
following the flow of \eqref{slow_k3m1}.

\begin{proposition}
Let $\A_i$ and $\Ahat_i^{\OUT}$
be defined as in the preceding paragraphs.
For fixed $\delta_3>0$,
if $\delta_4>0$ is sufficiently small,
then the transition map
$\Pi_{\epsilon\,\Ahat_i^{\OUT}}^{\widehat{\Sigma}}$
from $\Ahat_i^{\OUT}$ to $\widehat{\Sigma}_i$
for system \eqref{deq_K3}
is well-defined
for all small $\epsilon> 0$.
Moreover, $\Pi_{\epsilon\,\Ahat_i^{\OUT}}^{\widehat{\Sigma}}$
is $O(\epsilon)$-close
to $\Pi_{0\,\Ahat_i^{\OUT}}^{\widehat{\Sigma}}$
in the $C^1(\Ahat_i^{\OUT})$-norm
as $\epsilon\to 0$.
\label{prop_K3}
\end{proposition}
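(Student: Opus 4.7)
The plan is to parallel the proof of Proposition~\ref{prop_K3m1}, now tracking all $m$ components of the blown-up coordinate $\zeta$ simultaneously. The essential observation is that \eqref{deq_K3} differs from the limiting system \eqref{slow_K3} only through terms containing the quantities $z^{(j)}-z_i^{(j)} = \omega_i^{(j)}\exp(-\zeta^{(j)}/\epsilon)$; hence as long as every component $\zeta^{(j)}(\tau)$ can be kept bounded below by some fixed positive constant $C$ along the unperturbed flow, the perturbation becomes exponentially small in the $z$-dependent pieces and the argument reduces to regular perturbation theory.

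The first step I would carry out is establishing this uniform lower bound. Along the base trajectory $\widehat{\sigma}_i$ starting at $(A_i,\zeta_i)$, the $j$-th component at time $s$ is precisely $\zeta_i^{(j)} + \int_0^s \frac{\partial g^{(j)}}{\partial z^{(j)}}(p_i(\tau),z_i,0)\,d\tau$, which by Assumption~\ref{hyp_int} is nonzero on $(0,\tau_i)$ for every $j\ne J_i$, and is nonzero on $(0,\tau_i)$ for $j=J_i$ as well (vanishing only at the endpoint $\tau_i$). Combined with Assumption~\ref{hyp_nondegen} to pin down the signs at the entry and exit, continuity then forces strict positivity on any compact subinterval $[\delta_3,\tau_i/2+\eta]$ needed to reach $\widehat{\Sigma}_i$. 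Continuous dependence of $\widehat{\Phi}_i$ on initial data, together with compactness of $\Ahat_i^{\OUT}$, then yields a single constant $C>0$ and a sufficiently small $\delta_4$ such that $\zeta^{(j)}(\tau)\ge C$ uniformly for $j=1,\dots,m$ and for all initial data in $\Ahat_i^{\OUT}$.

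With this bound in place, I would invoke Assumption~\ref{hyp_cmin}: since $h$ vanishes on each hyperplane $\{z^{(j)}=z_i^{(j)}\}$, smoothness gives $h(p,z,\epsilon) = O\bigl(\prod_{j}|z^{(j)}-z_i^{(j)}|\bigr) = O(e^{-mC/\epsilon})$, so the term $h/\epsilon$ in \eqref{deq_K3} is exponentially small; similarly $g^{(j)}/(z^{(j)}-z_i^{(j)})$ extends smoothly and equals $\frac{\partial g^{(j)}}{\partial z^{(j)}}(p,z_i,0) + O(\epsilon + e^{-C/\epsilon})$. Thus \eqref{deq_K3} is an $O(\epsilon)$-regular perturbation of \eqref{slow_K3} on a tubular neighborhood of the base trajectory segment from $\Ahat_i^{\OUT}$ to $\widehat{\Sigma}_i$, and standard regular perturbation theory (transversality of $\widehat{\Sigma}_i$ supplies well-definedness via the implicit function theorem) delivers both the existence of $\Pi_{\epsilon\,\Ahat_i^{\OUT}}^{\widehat{\Sigma}}$ for small $\epsilon>0$ and the $C^1(\Ahat_i^{\OUT})$ closeness to $\Pi_{0\,\Ahat_i^{\OUT}}^{\widehat{\Sigma}}$. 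The main obstacle is the first step: the $m=1$ case needed only a single such lower bound, whereas here one must ensure that \emph{every} component $\zeta^{(j)}$, not just the distinguished one $\zeta^{(J_i)}$, stays strictly positive, and this can only be arranged uniformly after exploiting the full strength of Assumption~\ref{hyp_int} for all $j$ simultaneously.
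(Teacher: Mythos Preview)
Your proposal is correct and follows essentially the same route as the paper: invoke Assumption~\ref{hyp_int} to obtain a uniform positive lower bound $C$ on every component $\zeta^{(j)}$ along $\widehat\sigma_i$ restricted to $[\delta_3,\tau_i-\delta_3]$, which forces the $z$-dependent remainder terms in \eqref{deq_K3} to be exponentially small and reduces the problem to regular perturbation of \eqref{slow_K3}. The paper's own proof is in fact terser---it states the lower bound and then simply refers back to the argument of Proposition~\ref{prop_K1}---so your explicit handling of the $h/\epsilon$ term via Assumption~\ref{hyp_cmin} and your remark that the key new difficulty over the $m=1$ case is controlling all $m$ components simultaneously are both welcome elaborations rather than departures.
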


\begin{proof}
By Assumption \ref{hyp_int}, 
\begin{equation*}
  \inf\left\{
    \zeta^{(j)}:
    (p,\zeta)
    =\widehat{\sigma}_i(\tau),
    \tau\in [\delta_3,\tau_i-\delta_3],
    j=1,2,\dots,m
  \right\}
  >C
\end{equation*}
for some $C>0$.
Therefore, 
similar to the proof of Proposition~\ref{prop_K1},
system \eqref{deq_K3}
is a regular perturbation of \eqref{slow_K3},
and the desired result follows.
\end{proof}

Define chart $\kappa_{\epsilon i}^{(20)}$ 
for $(p,z)\in \Omega$
by \begin{equation}\notag
\begin{aligned}
  &\kappa_{\epsilon i}^{(20)}
  \big(p,z)
  = (p,q,\zeta)
  \\
  &\text{with}\;\;
  q=z^{(J_i)}
  \;\;\text{and}\;\;
  z^{(j)}=z_{i}^{(j)}+ \omega_i^{(j)}\exp(-\widehat{\zeta}^{(j)}/\epsilon)
  \text{\; for }j=1,2\dots,m.
\end{aligned}\end{equation}
Chart $\kappa_{\epsilon i}^{(20)}$
converts system \eqref{deq_pz} to
\begin{equation}\label{deq_K2}\begin{aligned}
  &\dot{p}= \epsilon f(p,z,\epsilon)+ h(p,z,\epsilon),
  \\
  &\dot{q}= g^{(J_i)}(p,z,\epsilon),
  \\
  &{{\zeta}}^{(j)}= 
  \epsilon\; \frac{-g^{(j)}(p,z,\epsilon)}{z^{(j)}-z_i^{(j)}},
  \quad
  j=1,2,\dots,m,
  \\
  &
  \text{with}\quad
  z^{(j)}=z_i^{(j)}+\omega_i^{(j)}\exp(-\widehat{\zeta}^{(j)}/\epsilon).
\end{aligned}\end{equation}
Here we temporarily ignore the relation
$z^{(J_{i-1})}=z_{i-1}^{(J_{i-1})}+q$.
Formally the limiting slow system of \eqref{deq_K2}
at $z=z_i$ is \begin{equation}\label{slow_K2}\begin{aligned}
  &\frac{d}{d\tau}{p}= f(p,z_i0),
  \\
  &\frac{d}{d\tau}{q}= 0,
  \\
  &\frac{d}{d\tau}{\zeta}^{(j)}= 
  \frac{-\partial g^{(j)}}{\partial z^{(j)}}(p,z_i,0),
  \quad
  j=1,2,\dots,m.
\end{aligned}\end{equation}
Denote $\widetilde{\Phi}_{ i}$
the solution operator for \eqref{slow_K2}.
Let $\A_i^{\IN}$ and $\Ahat_i^{\OUT}$
be the sets defined by \eqref{def_Ain1} and \eqref{def_Ahat}.
We define \begin{equation*}
  \Atilde_{\epsilon i}^{\IN}
  =\kappa_{\epsilon i}^{(21)}\big(\A_i^{\IN}\big),\quad
  \Atilde_{\epsilon i}^{\OUT}
  =\kappa_{\epsilon i}^{(23)}\big(\Ahat_i^{\OUT}\big)
  \quad\text{for}\;
  \epsilon\ge 0.
\end{equation*}
Note that \begin{equation*}
  \Pi_{0\,\Atilde_{0i}}^{\Atilde_{0i}^{\OUT}}
  =\widetilde{\Phi}_{ i}(\cdot,\delta_3).
\end{equation*}

\begin{proposition}
There exists $\Delta>0$ such that
the following assertions hold.
Let $\Atilde_i^{\IN}$ and $\Atilde_i^{\OUT}$
be defined as in the preceding paragraphs
with $\delta_j<\Delta$, $j=1,2,3$,
then for all sufficiently small $\delta_4>0$,
the transition map 
$\Pi_{\epsilon\,\Atilde_{\epsilon i}^{\IN}}^{\Atilde_{\epsilon i}^{\OUT}}$
from $\Atilde_{\epsilon i}^{\IN}$ to $\Atilde_{\epsilon i}^{\OUT}$
following the flow of \eqref{deq_K2} is well-defined
for all small $\epsilon>0$.
Moreover,
\begin{equation}\label{trans_K2}
  \left\|
    \Pi_{\epsilon\,\Atilde_{\epsilon i}^{\IN}}^{\Atilde_{\epsilon i}^{\OUT}}
    \circ
    \kappa_{\epsilon i}^{(21)}
    -
    \Pi_{0\,\Atilde_{0i}}^{\Atilde_{0i}^{\OUT}}
    \circ 
    \Pi_{0\,\Atilde_{0 i}^{\IN}}^{\Atilde_{0i}}
    \circ \kappa_{0i}^{(21)}
  \right\|_{C^1(\A_i^{\IN})}
  =O(\epsilon).
\end{equation}
\label{prop_K2}
\end{proposition}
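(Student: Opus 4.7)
The plan is to mirror the proof of Proposition~\ref{prop_K2m1} by reducing \eqref{deq_K2} to the framework of the Exchange Lemma (Lemma~\ref{lem_EL}). First I would translate the fast direction by setting $s=q-z_i^{(J_i)}$ so that the critical manifold $M_i$ corresponds to $s=0$, and regard the remaining coordinates $a=(p,\zeta)\in\mathbb R^n\times\mathbb R^m$ as the slow variables in the sense of Lemma~\ref{lem_EL}. The crucial observation is that for $j\ne J_i$, the auxiliary coordinate $z^{(j)}=z_i^{(j)}+\omega_i^{(j)}\exp(-\widehat{\zeta}^{(j)}/\epsilon)$ is super-exponentially close to $z_i^{(j)}$ because Assumption~\ref{hyp_int} guarantees $\widehat{\zeta}^{(j)}$ is bounded below by a positive constant on the relevant part of $\Atilde_{\epsilon i}^{\IN}$. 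Thus these exponential contributions can be absorbed into the error and do not affect the leading-order reduction.

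Next I would Taylor expand each component of \eqref{deq_K2} about $(s,\epsilon)=(0,0)$. Using Assumption~\ref{hyp_cmin}, both $h$ and $g^{(J_i)}$ vanish at $s=0$, so the system takes the form
\begin{equation*}
\begin{aligned}
&\dot p= \epsilon f(p,z_i,0)+ s\,H_i(p,\zeta)+O\bigl(|(\epsilon,s)|^2+ e^{-C/\epsilon}\bigr),\\
&\dot s= s\,\tfrac{\partial g^{(J_i)}}{\partial z^{(J_i)}}(p,z_i,0)+O\bigl(|(\epsilon,s)|^2+ e^{-C/\epsilon}\bigr),\\
&\dot\zeta^{(j)}= -\epsilon\,\tfrac{\partial g^{(j)}}{\partial z^{(j)}}(p,z_i,0)+O\bigl(|(\epsilon,s)|^2+ e^{-C/\epsilon}\bigr),\qquad j=1,\dots,m,
\end{aligned}
\end{equation*}
for some constant $C>0$ coming from Assumption~\ref{hyp_int}. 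This is precisely the structure \eqref{deq_bc} of Lemma~\ref{lem_EL} after grouping $a=(p,\zeta)$ and $b=s$, modulo exponentially small perturbations that are harmless in the $C^1$ sense. Assumption~\ref{hyp_nondegen} supplies the strict negativity $\tfrac{\partial g^{(J_i)}}{\partial z^{(J_i)}}(A_i,z_i,0)<0$ needed for hypothesis~\eqref{cond_rho} (after possibly shrinking the neighborhood so the coefficient stays negative).

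I would then verify the geometric hypotheses (i)--(iii) of Lemma~\ref{lem_EL} for the family $\{\Atilde_{\epsilon i}^{\IN}\}_\epsilon$. Non-tangentiality in (i) follows because $q'_i(t)\ne 0$ at the base point $p_i^{\IN}$ by construction of $\A_i^{\IN}$; condition (ii) holds because the projection of $\A_i^{\IN}$ along the layer flow is a neighborhood of $A_i$ in $M_i$, and the slow vector field $f(\cdot,z_i,0)$ is nonzero there by Assumption~\ref{hyp_slow}; condition (iii) holds after choosing $\delta_3>0$ small so that $\widehat\sigma_i([\delta_3,\tau_i-\delta_3])$ is an embedded rectifiable curve, which is possible by Assumption~\ref{hyp_slow}. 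With these in hand, the Exchange Lemma produces a well-defined transition map satisfying \eqref{C1_EL}, which, after unpacking the identification of $\Pi_{0\,\Atilde_{0i}^{\IN}}^{\Atilde_{0i}}$ with the layer-flow projection and $\Pi_{0\,\Atilde_{0i}}^{\Atilde_{0i}^{\OUT}}$ with $\widetilde{\Phi}_i(\cdot,\delta_3)$, is exactly \eqref{trans_K2}.

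The main obstacle I expect is bookkeeping rather than analytic: one must carefully verify that the $m-1$ extra slow coordinates $\zeta^{(j)}$, $j\ne J_i$, which were absent in the $m=1$ case of Proposition~\ref{prop_K2m1}, neither introduce tangencies at the in-section nor spoil the transversality at the out-section $\Atilde_{\epsilon i}^{\OUT}=\kappa_{\epsilon i}^{(23)}(\Ahat_i^{\OUT})$. This is handled by noting that on $\Atilde_{\epsilon i}^{\OUT}$ the $\zeta$-direction is parameterized transversally to the slow flow by construction of $\Ahat_i^{\OUT}$ in \eqref{def_Ahat}, and that the super-exponentially small exponential remainders never interact with these directions to leading order. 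Once this is established, the $C^1$-closeness of the transition map to its singular limit is a direct consequence of Lemma~\ref{lem_EL} applied coordinatewise.
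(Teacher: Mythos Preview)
Your proposal is correct and follows essentially the same route as the paper: translate the fast coordinate by $s=q-z_i^{(J_i)}$, use Assumption~\ref{hyp_cmin} to expand \eqref{deq_K2} into the form \eqref{deq_bc} with $a=(p,\zeta)$ and $b=s$, invoke the sign condition on $\tfrac{\partial g^{(J_i)}}{\partial z^{(J_i)}}(A_i,z_i,0)$, and apply Lemma~\ref{lem_EL}. Your write-up is in fact more careful than the paper's terse argument---you explicitly justify the exponential smallness of $z^{(j)}-z_i^{(j)}$ for $j\ne J_i$ via Assumption~\ref{hyp_int}, verify the geometric hypotheses (i)--(iii) of Lemma~\ref{lem_EL}, and address transversality at the out-section, none of which the paper spells out.
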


\begin{proof}
Note that we have
$z^{(J_{i-1})}=z_{i-1}^{(J_{i-1})}+q$
when converting \eqref{deq_pz} to \eqref{deq_K2}.
Let $s=q-z_{i}^{(J-1)}+z_{i-1}^{(J_{i-1})}$.
By Assumption~\ref{hyp_cmin}, \eqref{deq_K2} can be written as
\begin{equation}\notag\begin{aligned}
  &\dot{p}= \epsilon f(p,z_i,0)
  + h(p,z_i+s\,{\sf e}_{J_{i-1}},0)
  + O(|(\epsilon,s)|^2),
  \\
  &\dot{s}= g(p,z_{i}+s\,{\sf e}_{J_{i-1}},0)+ O(|(\epsilon,s)|^2),
  \\
  &\dot{\zeta}^{(j)}=
    -\epsilon\, 
    \frac{\partial g^{(j)}}{\partial z^{(j)}}(p,z_i,0)
    + O(|(\epsilon,s)|^2)
\end{aligned}\end{equation} as $(\epsilon,s)\to 0$.
Since $\frac{\partial g^{(J_i)}}{\partial z^{(J_i)}}(A_i,z_i,0)<0$
by Assumption~\ref{hyp_int},
applying Lemma~\ref{lem_EL}
with $b=z$ and $a=(p,s)$
we obtain \eqref{trans_K2}.
\end{proof}

We denote
$\Pi_{0\Gamma_i}^{\A_i}$
the transition map from $\Gamma_i$
to $\A_i\times \{z_i^{(J_i)}\}\times \{\widehat{\zeta}_i\}$
along the flow of \eqref{fast_K1}
and $\Pi_{0\,\Ahat_i}^{\widehat{\Sigma}_i}$
the transition map from ${0\,\Ahat_i}$ to ${\widehat{\Sigma}_i}$
along the flow of \eqref{slow_K3}.

\begin{proposition}
There exist $\delta_j>0$, $0\le j\le 4$, such that
if $\Gamma_i$, $\A_i$, $\Sigma_i$
are defined in the preceding paragraphs,
then the transition map $\Pi_{\epsilon\Gamma_i}^{{\Sigma}_i}$
from $\Gamma_i$ to ${\Sigma}_i$
following the flow of \eqref{deq_pz} is well-defined
for all small $\epsilon>0$,
and
\begin{equation}\label{trans_A}
  \left\|
    \kappa_{\epsilon i}^{(31)}
    \circ
    \Pi_{\epsilon\Gamma_i}^{\Sigma_i}
    -
    \Pi_{0\,\Ahat_i}^{\widehat{\Sigma}_i}
    \circ \kappa_{0 i}^{(31)}
    \circ \Pi_{0\Gamma_i}^{\A_i}
  \right\|_{C^1(\Gamma_i)}
  =O(\epsilon)
\end{equation}
as $\epsilon\to 0$.
\label{prop_trans_A}
\end{proposition}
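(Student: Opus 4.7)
The plan is to mirror the proof of Proposition~\ref{prop_trans_Am1} in the multi-dimensional setting, using the three already-established local propositions (Propositions~\ref{prop_K1}, \ref{prop_K3}, \ref{prop_K2}) to control the flow on three overlapping regions: the entry region near $\gamma_i$, the intermediate region near the landing point $A_i$, and the exit region along $\sigma_i$. First I would fix $\delta_1,\delta_2,\delta_3<\Delta$, where $\Delta$ is the constant supplied by Proposition~\ref{prop_K2}. Then shrinking $\delta_0$ and $\delta_4$ if necessary, Propositions~\ref{prop_K1} and~\ref{prop_K3} give me that all three subtransition maps are well-defined for all small $\epsilon>0$ and are $O(\epsilon)$-close to their $\epsilon=0$ counterparts in the $C^1$-norm.

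Once this is set up, the proof reduces to a formal composition argument. I would decompose
\begin{equation*}
  \Pi_{\epsilon\Gamma_i}^{\Sigma_i}
  =\bigl(\kappa_{\epsilon i}^{(03)}\circ \Pi_{\epsilon\,\Ahat_i^{\OUT}}^{\widehat{\Sigma}_i}\circ \kappa_{\epsilon i}^{(30)}\bigr)
  \circ
  \bigl(\kappa_{\epsilon i}^{(02)}\circ \Pi_{\epsilon\,\Atilde_{\epsilon i}^{\IN}}^{\Atilde_{\epsilon i}^{\OUT}}\circ \kappa_{\epsilon i}^{(20)}\bigr)
  \circ
  \Pi_{\epsilon\Gamma_i}^{\A_i^{\IN}},
\end{equation*}
apply the three $C^1$-closeness estimates, and then use the fact that at $\epsilon=0$ the charts $\kappa_{0i}^{(21)}$, $\kappa_{0i}^{(23)}$, $\kappa_{0i}^{(32)}$ are compatible in the sense that
\begin{equation*}
  \Pi_{0\,\Ahat_i^{\OUT}}^{\widehat{\Sigma}_i}\circ \kappa_{0 i}^{(32)}\circ \Pi_{0\,\Atilde_{0i}}^{\Atilde_{0i}^{\OUT}}=\Pi_{0\,\Ahat_i}^{\widehat{\Sigma}_i}\circ \kappa_{0i}^{(23)}
\end{equation*}
and
\begin{equation*}
  \Pi_{0\,\Atilde_{0i}^{\IN}}^{\Atilde_{0i}}\circ \kappa_{0i}^{(21)}\circ \Pi_{0\Gamma_i}^{\A_i^{\IN}}=\kappa_{0i}^{(21)}\circ \Pi_{0\Gamma_i}^{\A_i}.
\end{equation*}
Telescoping through these identities collapses the composition, and finally composing both sides with $\kappa_{0i}^{(30)}$ yields \eqref{trans_A}.

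The main technical obstacle is verifying that the two chart-compatibility identities above remain valid when $m\ge 2$. In the scalar case ($m=1$) this is essentially trivial because the extra $\zeta$-coordinate decouples. Here each $\kappa_{\epsilon i}^{(jk)}$ carries $m$ exponential coordinates, and the corresponding limiting slow system \eqref{slow_K2} has $q$ frozen while the remaining $\zeta^{(j)}$ evolve according to the multi-dimensional analogue of \eqref{slow_K3}. I would need to check carefully that, at $\epsilon=0$, the value $q=z_i^{(J_i)}$ is attained exactly on $\Atilde_{0i}$ and the remaining $\zeta^{(j)}$-components match the $\zeta$-components evolved under \eqref{slow_K3} from $\Ahat_i$. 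This is a bookkeeping check based on the definition of $\widehat{\Phi}_i$ and $\widetilde{\Phi}_i$ rather than a genuinely new analytic difficulty.

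Assuming these compatibility relations, the remainder of the argument is a direct copy of Proposition~\ref{prop_trans_Am1}: the three $O(\epsilon)$ estimates propagate under composition because each factor is $C^1$-bounded uniformly in $\epsilon$, and the chart $\kappa_{\epsilon i}^{(30)}$ applied to both sides converts the scalar-case argument verbatim into the multi-dimensional statement \eqref{trans_A}.
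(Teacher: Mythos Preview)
Your approach is essentially identical to the paper's: the paper's proof simply says the assertions ``can be derived from Propositions~\ref{prop_K1},~\ref{prop_K3} and~\ref{prop_K2}'' analogously to Proposition~\ref{prop_trans_Am1} and skips the details entirely, while you supply precisely that analogous argument. One small notational slip: in the multi-dimensional setup $\Gamma_i$ and $\A_i^{\IN}$ are defined in chart~(1) (the $(p,q,\widehat{\zeta})$-space), not in chart~(0), so your decomposition should use $\kappa_{\epsilon i}^{(13)},\kappa_{\epsilon i}^{(31)},\kappa_{\epsilon i}^{(12)},\kappa_{\epsilon i}^{(21)}$ rather than $\kappa_{\epsilon i}^{(03)},\kappa_{\epsilon i}^{(30)},\kappa_{\epsilon i}^{(02)},\kappa_{\epsilon i}^{(20)}$, which also makes the final step of composing with $\kappa_{0i}^{(31)}$ (not $\kappa_{0i}^{(30)}$) line up with \eqref{trans_A}.
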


\begin{proof}
Analogous to the proof of Proposition~\ref{prop_trans_Am1},
the assertions
can be derived from
Propositions~\ref{prop_K1},~\ref{prop_K3} and~\ref{prop_K2}.
We skip it here.
\end{proof}

\begin{proof}[Proof of Theorem~\ref{thm_main3}]
By a reversal of the time variable,
applying Proposition \ref{prop_trans_A} we have
\begin{equation*}
  \left\|
    \kappa_{\epsilon i}^{(31)}
    \circ
    \Pi_{\epsilon \Gamma_{i+1}}^{\Sigma_i}
    -
    \Pi_{0\Bhat_i}^{\widehat{\Sigma}_i}
    \circ 
    \kappa_{0i}^{(31)}
    \circ \Pi_{0\Gamma_{i+1}}^{\B_i}
  \right\|_{C^1(\Gamma_{i+1})}
  =O(\epsilon).
\end{equation*}
Taking the inverse of the mappings we obtain \begin{equation}\label{trans_B}
  \left\|
    \Pi_{\epsilon\Sigma_i}^{\Gamma_{i+1}}
    \circ
    \kappa_{\epsilon i}^{(13)}
    -
    \Pi_{0\B_i}^{\Gamma_{i+1}}
    \circ 
    \kappa_{0i}^{(13)}
    \circ
    \Pi_{0{\widehat{\Sigma}_i}}^{\Bhat_i}
  \right\|_{C^1(\widehat{\Sigma}_i)}
  =O(\epsilon).
\end{equation}
By \eqref{trans_A} and \eqref{trans_B}, \begin{equation*}\begin{aligned}
  \Pi_{\epsilon\Gamma_i}^{\Gamma_{i+1}}
  &=\left(\Pi_{\epsilon\Sigma_i}^{\Gamma_{i+1}}\circ \kappa_{i\epsilon}^{(13)}\right)
  \circ
  \left(\kappa_{i\epsilon}^{(31)}\circ \Pi_{\epsilon\Gamma_i}^{\Sigma_i}\right)
  \\
  &=\left(
    \Pi_{0\B_i}^{\Gamma_{i+1}}
    \circ 
    \kappa_{0i}^{(13)}
    \circ
    \Pi_{0{\widehat{\Sigma}_i}}^{\Bhat_i}
  \right)
  \circ
  \left(
    \Pi_{0\,\Ahat_i}^{\widehat{\Sigma}_i}
    \circ \kappa_{0 i}^{(31)}
    \circ \Pi_{0\Gamma_i}^{\A_i}
  \right)
  +O(\epsilon)
  \\
  &= 
  \Pi_{0\B_i}^{\Gamma_{i+1}}
  \circ 
  \kappa_{0i}^{(13)}
  \circ
  \Pi_{0\,\Ahat_i}^{\Bhat_i}
  \circ \kappa_{0 i}^{(31)}
  \circ \Pi_{0\Gamma_i}^{\A_i}
  +O(\epsilon)
  \\
  &=
  \Pi_{0\B_i}^{\Gamma_{i+1}}
  \circ 
  \widehat{Q}_i
  \circ \Pi_{0\Gamma_i}^{\A_i}
  +O(\epsilon)
\end{aligned}\end{equation*}
Therefore, \begin{equation}\label{Pi_Gamma_ii}\begin{aligned}
  &\Pi_{\epsilon\Gamma_{i+1}}^{\Gamma_{i+2}}
  \circ
  \Pi_{\epsilon\Gamma_i}^{\Gamma_{i+1}}
  \\
  &\quad
  = 
  \left(
    \Pi_{0\B_{i+1}}^{\Gamma_{i+2}}
    \circ 
    \widehat{Q}_{i+1}
    \circ \Pi_{0\Gamma_{i+1}}^{\A_{i+1}}
  \right)
  \circ
  \left(
   \Pi_{0\B_i}^{\Gamma_{i+1}}
    \circ 
    \widehat{Q}_i
    \circ \Pi_{0\Gamma_i}^{\A_i}
  \right)
  +O(\epsilon).
\end{aligned}\end{equation}
We denote \begin{equation*}
  P_\epsilon
  = \Pi_{\epsilon\Gamma_{N}}^{\Gamma_1}
  \circ
  \cdots
  \circ
  \Pi_{\epsilon\Gamma_2}^{\Gamma_3}
  \circ
  \Pi_{\epsilon\Gamma_1}^{\Gamma_2}.
\end{equation*}
By \eqref{Pi_Gamma_ii}
and the relation that $\Pi_{0\B_{i-1}}^{\A_{i}}=\pi_{i}\times \mathrm{id}$,
we have \begin{equation*}\begin{aligned}
  P_\epsilon
  &=
  \Pi_{0\B_N}^{\Gamma_1}
  \circ
  \widehat{Q}_N
  \circ (\pi_N\times \mathrm{id})
  \circ
  \cdots \circ \widehat{Q}_2
  \circ (\pi_2\times \mathrm{id})
  \circ \widehat{Q}_1
  \circ
  \Pi_{0\Gamma_1}^{\A_1}
  + O(\epsilon).
\end{aligned}\end{equation*} 
Writing $\Pi_{0\Gamma_1}^{\A_1}
=\Pi_{0\B_N}^{\A_1}\circ \Pi_{0\Gamma_1}^{\B_N}
=(\pi_1\times \mathrm{id})\circ \big(\Pi_{\B_N}^{\Gamma_1}\big)^{-1}$,
it follows that 
\begin{equation*}\begin{aligned}
  P_\epsilon
  &=
  \Pi_{0\B_N}^{\Gamma_1}
  \circ
  \widetilde{P}
  \circ
  \left(\Pi_{0\B_N}^{\Gamma_1}\right)^{-1}
  +O(\epsilon),
\end{aligned}\end{equation*}
where $\widetilde{P}$ is defined by \eqref{def_Ptilde}.
This implies that \begin{equation*}\begin{aligned}
  &\det\left(
  DP_\epsilon
  -\mathrm{id}
  \right)
  \\
  &\quad=
  \det\left(
    D\Pi_{0\B_N}^{\Gamma_1}
    \circ
    DP
    \circ
    \left(D\Pi_{0\B_N}^{\Gamma_1}\right)^{-1}
    -\mathrm{id}
  \right)
  + O(\epsilon)
  \\
  &\quad=
  \det\left(
    DP
    -\mathrm{id}
  \right)
  + O(\epsilon).
\end{aligned}\end{equation*}
Hence, the linearization of
the return map $P_\epsilon$
at $(p_{01},q_{01},\widehat{\zeta}_{1})\in \Gamma_1$
does not have a singular value equal to $1$
for all small $\epsilon>0$
if $\det(DP-\mathrm{id})\ne 0$.
Consequently,
for all small $\epsilon>0$
there exists a locally unique 
fixed point $(p_{\epsilon 1},q_{\epsilon1},\widehat{\zeta}_{\epsilon 1})\in \Gamma_i$
of $P_\epsilon$.
Let $(p_{\epsilon 1},z_{\epsilon 1})
=\kappa_{\epsilon 1}^{(01)}
(p_{\epsilon 1},q_{\epsilon1},\widehat{\zeta}_{\epsilon 1})$.
Then the trajectory passing through $(p_{\epsilon 1},z_{\epsilon 1})$
is a periodic orbit of system \eqref{deq_pz}.
If the spectrum radius of $DP(p_{01},q_{01},\widehat{\zeta}_1)$
is smaller (resp.\ greater) than $1$,
then $P_\epsilon$ is a contraction (resp.\ expansion),
hence the periodic orbit is orbitally asymptotically stable (resp.\ unstable).
\end{proof}

\section{Examples}
\label{sec_ex}
In this section we apply the main results
to study
the examples \eqref{deq_CoEvol}, \eqref{deq_tradeoff}, \eqref{deq_switching}
and the planar system \eqref{deq_ab}
mentioned in Section~\ref{sec_intro}.

\subsection{Trade-off between Encounter and Growth Rates}
\label{sec_tradeoff}
Consider system \eqref{deq_tradeoff},
which takes the form
\begin{equation}\notag
\begin{aligned}
  &x'=F(x,\alpha)-G(x,y,\alpha),
  \\
  &y'=H(x,y,\alpha)- D(y),
  \\
  &\epsilon \alpha'
  =\alpha(1-\alpha)E(x,y,\alpha),
\end{aligned}\end{equation}
with \begin{equation*}\begin{aligned}
  &F(x,\alpha)=x(\alpha+r-kx),
  \\
  &G(x,y,\alpha)
  =H(x,y,\alpha)
  =\frac{xy(a\alpha^2+b\alpha+c)}{1+x},
  \\
  &D(y,\beta)=dy,
\end{aligned}\end{equation*}
and \begin{equation*}
  E(x,y,\alpha)=\frac{\partial}{\partial \alpha}\left(\frac{x'}{x}\right)
  = 1-\frac{y(2a\alpha+b)}{1+x}.
\end{equation*}
The limiting fast system is \begin{equation}\notag
  \dot{x}=0,
  \quad
  \dot{y}=0,
  \quad
  \dot{\alpha}=\alpha(1-\alpha)E(x,y,\alpha).
\end{equation}
The critical manifolds are \begin{equation*}
  {M}_1=\{(x,y,\alpha): \alpha=0\}
  \quad\text{and}\quad
  {M}_2=\{(x,y,\alpha): \alpha=1\}.
\end{equation*}
On the critical manifolds ${M}_i$,
the limiting slow system is \begin{equation}\label{slow_tradeoff}\begin{aligned}
  &x'=F(x,\bar\alpha)-G(x,y,\bar\alpha),
  \\
  &y'=H(x,y,\bar\alpha)- D(y),
\end{aligned}\end{equation}
where $\bar\alpha=0,1$.
Let $\Phi_1$ and $\Phi_2$
be the solution operators for \eqref{slow_tradeoff}
with $\alpha=0$ and $\alpha=1$, respectively.
The transition maps $Q_1$ and $Q_2$
in Theorem~\ref{thm_main1}
are determined by \[\begin{aligned}
  Q_1(A_1)
  =
  \Phi_1(A_1,\tau_1)
  \;\;\text{with}\;\;
  \int_0^{\tau_1} 
  \left.\left(1-\frac{by}{1+x}\right)\right|_{(x,y)
  =\Phi_1(A_1,\tau)}\;d\tau
  =0
\end{aligned}\]
and \[\begin{aligned}
  Q_2(A_2)
  =
  \Phi_2(A_2,\tau_2)
  \;\;\text{with}\;\;
  \int_0^{\tau_2} 
  \left.\left(1-\frac{y(2a+b)}{1+x}\right)\right|_{(x,y)=\Phi_2(A_2,\tau)}\;d\tau
  =0.
\end{aligned}\]

Following \cite{Cortez:2010}, we set
$a = -0.1$,
$b = 3$,
$c = 1$,
$d = 2.8$,
$k = 1$,
and $r = 10$.
By implementing Newton's iteration,
we find points $A_1=B_2\approx (5.57,11.03)$
and $B_1=A_2\approx (9.96,0.36)$ satisfying \[
  A_2=B_1=Q_1(A_1)
  \quad\text{and}\quad
  A_1=B_2=Q_2(A_2).
\]
This means that $A_i$ and $B_i$
satisfy the following conditions (see Figure~\ref{fig_tradeoff}(b)):
\begin{itemize}
\item[(i)] $A_1$ and $B_1$
are connected by a trajectory $\sigma_1$
of \eqref{slow_tradeoff}
with $\bar{\alpha}=0$;
\item[(ii)] $A_2$ and $B_2$
are connected by a trajectory $\sigma_2$
of \eqref{slow_tradeoff}
with $\bar{\alpha}=1$;
\item[(iii)]
$\int_{\sigma_1}E(x,y,0)\;d\tau=0$
and
$\int_{\sigma_2}E(x,y,1)\;d\tau=0$.
\end{itemize}
Using the formulas
in Proposition~\ref{prop_DQi}
and Remark~\ref{rmk_computing},
we obtain
\begin{equation*}
  DQ_1(A_1)
  \approx \begin{pmatrix}
    -0.0001& -0.0029
    \\
    0.0009& 0.0258
  \end{pmatrix}
  \quad\text{and}\quad
  DQ_2(A_2)
  \approx \begin{pmatrix}
    0.02& 18.91
    \\
    -0.02& -16.95
  \end{pmatrix}.
\end{equation*}
Hence,
the eigenvalues of 
$DP(A_1)=DQ_2(A_2)\,DQ_1(A_1)$
are $\lambda_1\approx 2.86\cdot 10^{-14}$
and $\lambda_2=-0.42$,
which are both of magnitude less than one.
Therefore,
by Theorem~\ref{thm_main1},
the configuration \begin{equation*}
  \gamma_1\cup \sigma_1\cup \gamma_2\cup \sigma_2
\end{equation*}
corresponds to a relaxation oscillation
formed by
orbitally locally asymptotically stable periodic orbits.

For system \eqref{deq_tradeoff} with $\epsilon=0.1$,
taking initial data
$(x,y,\alpha) = (10,0.5,0.5)$
we find that the trajectory
converges to a periodic orbit
(see Figure~\ref{fig_tradeoff}(a))
near the singular configuration.

\subsection{Prey Switching}
\label{sec_switching}
Assuming that
the response functions $f_i(p_i)$
in \eqref{deq_switching} are linear,
after rescaling, the system is converted to
\begin{equation}\label{deq_switching1}\begin{aligned}
  &p_1'= (1-qz)p_1,
  \\
  &p_2'= (r-(1-q)z)p_2,
  \\
  &z'= \big(qp_1+(1-q)p_2-1\big)z,
  \\
  &\epsilon q'
  = q(1-q)(p_1-p_2).
\end{aligned}\end{equation}

The critical manifolds for \eqref{deq_switching1} are \begin{equation*}
  {M}_1=\{(p_1,p_2,z,q): q=0\}
  \quad\text{and}\quad
  {M}_2=\{(p_1,p_2,z,q): q=1\}.
\end{equation*}
On ${M}_1$, the restriction of \eqref{deq_switching} is
\begin{equation}\label{slow_Switching_M1}
\begin{aligned}
  &p_1'= p_1,
  \\
  &p_2'= (r-z)p_2,
  \\
  &z'= (p_2-1)z.
\end{aligned}\end{equation}
which means that the predators
hunt exclusively only the first prey population.
On ${M}_2$, the restriction of \eqref{deq_switching} is
\begin{equation}\label{slow_Switching_M2}
\begin{aligned}
  &p_1'= (1-z)p_1,
  \\
  &p_2'= rzp_2,
  \\
  &z'= (p_1-1)z,
\end{aligned}\end{equation}
which means that the predators
hunt exclusively only the second prey population.

Let $\Phi_1$ and $\Phi_2$
be the transition maps for
\eqref{slow_Switching_M1} and \eqref{slow_Switching_M2}, respectively.
The transition maps $Q_1$ and $Q_2$
in Theorem~\ref{thm_main1}
are determined by \[\begin{aligned}
  Q_1(A_1)
  =
  \Phi_1(A_1,\tau_1)
  \;\;\text{with}\;\;
  \int_0^{\tau_1} 
  \big(p_1-p_2\big)\Big|_{(p_1,p_2,z)=\Phi_1(A_1,\tau)}\;d\tau
  =0
\end{aligned}\]
and \[\begin{aligned}
  Q_2(A_2)
  =
  \Phi_2(A_2,\tau_2)
  \;\;\text{with}\;\;
  \int_0^{\tau_2} 
  \big(p_1-p_2\big)\Big|_{(p_1,p_2,z)=\Phi_2(A_2,\tau)}\;d\tau
  =0.
\end{aligned}\]

With the parameters given in \cite{Piltz:2017},
$r = 0.5$ and $m = 0.4$,
we find
$A_1=B_2\approx (0.92,1.08,1.50)$
and
$A_2=B_1\approx (1.08,0.92,1.50)$
such that the transition maps $Q_i$
in Theorem~\ref{thm_main1}
satisfy
$Q_1(A_1)=B_1$ and $Q_2(A_2)=B_2$
(see Figure~\ref{fig_switching}(b)).
Using the formulas
in Proposition~\ref{prop_DQi}
and Remark~\ref{rmk_computing},
we obtain \begin{equation*}
  DQ_1(A_1)
  \approx \begin{pmatrix}
    -6.78& 5.74& -1.00
    \\
    6.77& -4.03& 0.70
    \\
    0.34& -0.16& 1.04
  \end{pmatrix},\;\;
  DQ_2(A_2)
  \approx \begin{pmatrix}
    -1.56& 3.38& 0.55
    \\
    2.80& -2.80& -0.99
    \\
    -0.07& 0.34& 1.06
  \end{pmatrix}.
\end{equation*}
Hence, the eigenvalues of $DP(A_1)=DQ_2(A_2)\,DQ_1(A_1)$
are $\lambda_1\approx 60.55$ and $\lambda_{2,3}\approx 0.97\pm 0.26\sqrt{-1}$.
Since $\lambda_1$ is greater than $1$,
by Theorem~\ref{thm_main1},
the configuration 
connecting $A_i$ and $B_i$
corresponds to a relaxation oscillation
formed by orbitally unstable periodic orbits
(see Figure~\ref{fig_switching}(b)).

\subsection{Coevolution}
\label{sec_coevolution}
System \eqref{deq_CoEvol} 
has critical manifolds ${M}_i$, $1\le i\le 4$,
corresponding to
$(\alpha,\beta)=(\alpha_i,\beta_i)$
with $(\alpha_i,\beta_i)$, $i=1,2,3,4$,
equal to $(0,0)$, $(0,1)$, $(1,1)$ and $(1,0)$, respectively.
The limiting slow system on each ${M}_i$ is
\begin{equation}\label{slow_CoEvol}
\begin{aligned}
  &\frac{d}{d\tau}x=F(x,\alpha_i)-G(x,y,\alpha_i,\beta_i),
  \\
  &\frac{d}{d\tau}y=H(x,y,\alpha_i,\beta_i)- D(y,\beta_i).
\end{aligned}\end{equation}
The numbers $\omega_i=(\omega_i^{(1)},\omega_i^{(2)})$
defined by \eqref{def_omega}
are $\omega_1=(1,1)$,
$\omega_2=(1,-1)$,
$\omega_3=(-1,-1)$,
and $\omega_4=(-1,1)$.
Equations for $\zeta=(\zeta^{(1)},\zeta^{(2)})$ in \eqref{slow_pzeta}
on $M_i$ are
\begin{equation}\label{slow_CoEvol_zeta}\begin{aligned}
  &\frac{d}{d\tau}\zeta_i^{(1)}=
  \omega_i^{(1)}
  E_1(x,y,\alpha_i,\beta_i),
  \\
  &\frac{d}{d\tau}\zeta_i^{(2)}=
  \omega_i^{(2)}
  E_2(x,y,\alpha_i,\beta_i),
\end{aligned}\end{equation}
where \[
  E_1(x,y,\alpha,\beta)
  =
  \frac{\partial}{\partial \alpha}
  \left(
    \frac{F(x,\alpha)-G(x,y,\alpha,\beta)}{x}
  \right)
\]
and \[
  E_2(x,y,\alpha,\beta)
  =
  \frac{\partial}{\partial \beta}
  \left(
    \frac{H(x,\alpha)-D(x,y,\alpha,\beta)}{y}
  \right).
\]
Let $\widehat{\Phi}_i$,  $1\le i\le 4$,
be the solution operators for
system \eqref{slow_CoEvol}-\eqref{slow_CoEvol_zeta}.
Then the transition maps $\widehat{Q}_i$
in Theorem~\ref{thm_main2}
are determined by \begin{align*}
  &\widehat{Q}_1(A_1,\zeta)
  =
  \widehat{\Phi}_1((A_1,\zeta),\tau_1)
  \;\;\text{with}\;\;
  \zeta^{(2)}
  +
  \int_0^{\tau_1} 
  E_2(x,y,0,0)\Big|_{(x,y)=\Phi_1(A_1,\tau)}\;d\tau
  =0,
  \\
  &\widehat{Q}_2(A_2,\zeta)
  =
  \widehat{\Phi}_2((A_2,\zeta),\tau_2)
  \;\;\text{with}\;\;
  \zeta^{(1)}
  +\int_0^{\tau_2} 
  E_1(x,y,0,1)\Big|_{(x,y)=\Phi_2(A_2,\tau)}\;d\tau
  =0,
  \\
  &\widehat{Q}_3(A_3,\zeta)
  =
  \widehat{\Phi}_3((A_3,\zeta),\tau_3)
  \;\;\text{with}\;\;
  \zeta^{(2)}
  - \int_0^{\tau_1} 
  E_2(x,y,1,1)\Big|_{(x,y)=\Phi_1(A_1,\tau)}\;d\tau
  =0,
  \\
  &\widehat{Q}_4(A_4,\zeta)
  =
  \widehat{\Phi}_4((A_4,\zeta),\tau_4)
  \;\;\text{with}\;\;
  \zeta^{(1)}
  -\int_0^{\tau_4} 
  E_1(x,y,1,0)\Big|_{(x,y)=\Phi_4(A_4,\tau)}\;d\tau
  =0.
\end{align*}
Following Cortez and Weitz \cite[Supporting Information D]{Cortez:2014-PNAS},
we consider \eqref{deq_CoEvol}
with \begin{equation*}\begin{aligned}
  &F(x,\alpha)=
  x(s_0+s_1\alpha)\left(1-\frac{x}{k_0+k_1\alpha}\right),
  \\
  &G(x,y,\alpha,\beta)
  = \frac{(r_0+r_1\alpha+r_2\beta
  +r_3\alpha\beta+r_4\beta^2)xy}{1+hx},
  \\
  &H(x,y,\alpha,\beta)
  =c_0G(x,y,\alpha,\beta),
  \\
  &D(y,\beta)=y^{1.5}(\delta_0+\delta_1\beta),
\end{aligned}\end{equation*}
and parameters
$s_0=2.5, s_1=3.5, k_0=1, k_1=0.1, 
r_0=0.65, r_1=3, r_2=2.3,
r_3=-0.2, r_4=0.01, 
c_0=1.7, 
\delta_0=0.76, \delta_1=1.77$ and $h=1$.
Implementing Newton's iteration
for $\widehat{Q}_i(A_i,\zeta_i)=(A_{i+1},\zeta_{i+1})$, $1\le i\le 4$,
we find $B_4=A_1\approx (0.33,1.99)$,
$B_1=A_2\approx (0.92,0.56)$,
$B_2=A_3\approx (0.60,0.55)$
and $B_3=A_4\approx (0.30,0.93)$
(see Figure~\ref{fig_CoEvol_sing}(b)),
and $\zeta_1\approx(0,0.98)$,
$\zeta_2\approx(3.84,0)$,
$\zeta_3\approx(0,1.12)$
and $\zeta_4\approx(0.55,0)$.

Let $\{{\sf e}_x,{\sf e}_y,{\sf e}_\alpha,{\sf e}_\beta\}$
be the standard ordered basis of the $(x,y,\alpha,\beta)$-space.
Note that
the tangent space of $\A_1\times \Lambda_1$ at $(A_1,\zeta_1)$
is spanned by $\{{\sf e}_x,{\sf e}_y,{\sf e}_\beta\}$,
and the tangent space of $\B_1\times \Lambda_2$ at $(B_1,\zeta_2)$
is spanned by $\{{\sf e}_x,{\sf e}_y,{\sf e}_\alpha\}$.
Using formulas in Proposition~\ref{prop_DQiHat},
we obtain
\begin{equation*}
  D\widehat{Q}_1(A_1,\zeta_1)
  \approx\begin{blockarray}{*{3}{c} l}
    \begin{block}{*{3}{>{$}c<{$}} l}
      ${\sf e}_{x}$ & ${\sf e}_{x}$ & ${\sf e}_{\beta}$ & \\
    \end{block}
    \begin{block}{(*{3}{c})>{$}l<{$}}
      0.013 & 0.004 & -0.007 & ${\sf e}_{x}$ \\
      0.080 & -0.254 & 0.038 & ${\sf e}_{y}$ \\
      -3.29 & -2.42 & 0.67 & ${\sf e}_{\alpha}$ \\
    \end{block}
  \end{blockarray}.
\end{equation*} 
Similarly, \begin{equation*}
  D\widehat{Q}_2(A_2,\zeta_2)
  \approx\begin{blockarray}{*{3}{c} l}
    \begin{block}{*{3}{>{$}c<{$}} l}
      ${\sf e}_{x}$ & ${\sf e}_{x}$ & ${\sf e}_{\alpha}$ & \\
    \end{block}
    \begin{block}{(*{3}{c})>{$}l<{$}}
      -0.00040 & -0.0058 & 0.00024 & ${\sf e}_{x}$ \\
      -0.00003 & 0.00024 & 0.00030 & ${\sf e}_{y}$ \\
      0.37 & -1.44 & -0.26 & ${\sf e}_{\beta}$ \\
    \end{block}
  \end{blockarray},\quad
\end{equation*}
and the approximations of
$D\widehat{Q}_3(A_3,\zeta_3)$
and $D\widehat{Q}_4(A_4,\zeta_4)$ are, respectively, \begin{equation*}
  \begin{blockarray}{*{3}{c} l}
    \begin{block}{*{3}{>{$}c<{$}} l}
      ${\sf e}_{x}$ & ${\sf e}_{x}$ & ${\sf e}_{\beta}$ & \\
    \end{block}
    \begin{block}{(*{3}{c})>{$}l<{$}}
      0.29 & -0.04 & -0.22 & ${\sf e}_{x}$ \\
      0.26 & -0.67 & 0.49 & ${\sf e}_{y}$ \\
      2.49 & 0.13 & -0.86 & ${\sf e}_{\alpha}$ \\
    \end{block}
  \end{blockarray}
  \quad\text{and}\quad
  \begin{blockarray}{*{3}{c} l}
    \begin{block}{*{3}{>{$}c<{$}} l}
      ${\sf e}_{x}$ & ${\sf e}_{x}$ & ${\sf e}_{\alpha}$ & \\
    \end{block}
    \begin{block}{(*{3}{c})>{$}l<{$}}
      -0.10 & -0.09 & 0.03 & ${\sf e}_{x}$ \\
      0.42 & 0.38 & -0.13 & ${\sf e}_{y}$ \\
      -0.36 & -0.33 & 0.11 & ${\sf e}_{\beta}$ \\
    \end{block}
  \end{blockarray}.
\end{equation*}
Hence, the eigenvalues of
\begin{equation*}
  D\widehat{P}(A_1,\zeta_1)
  =D\widehat{Q}_4(A_4,\zeta_4)\; D\widehat{Q}_3(A_3,\zeta_3)\; 
  D\widehat{Q}_2(A_2,\zeta_2)\; D\widehat{Q}_1(A_1,\zeta_1)
\end{equation*}
are $\lambda_1\approx 0.39$, $\lambda_2\approx-6.14\cdot 10^{-5}$
and $\lambda_3\approx-5.11\cdot10^{-11}$,
which are all of magnitude less than one.
Therefore,
by Theorem~\ref{thm_main2},
this singular configuration
corresponds to a relaxation oscillation
formed by
orbitally locally asymptotically stable periodic orbits.

\subsection{A Planar System}
\label{sec_planar}
The limiting fast system of \eqref{deq_ab} is
\begin{equation}\label{fast_ab}\begin{aligned}
  &\frac{d}{dt}a= b\,H(a,b,0),
  \quad
  &\frac{d}{dt}b= b\,G(a,b,0).
\end{aligned}\end{equation}
On the critical manifold ${M}=\{(a,b): b=0\}$,
the limiting slow system is \begin{equation}\notag
  a'=F(a,0,0).
\end{equation}
We assume that (see Figure~\ref{fig_bifdelay})
\begin{figure}[htbp] 
\begin{center}
{\includegraphics[trim = 2.7cm .9cm 1cm .7cm, clip, width=.55\textwidth]%
{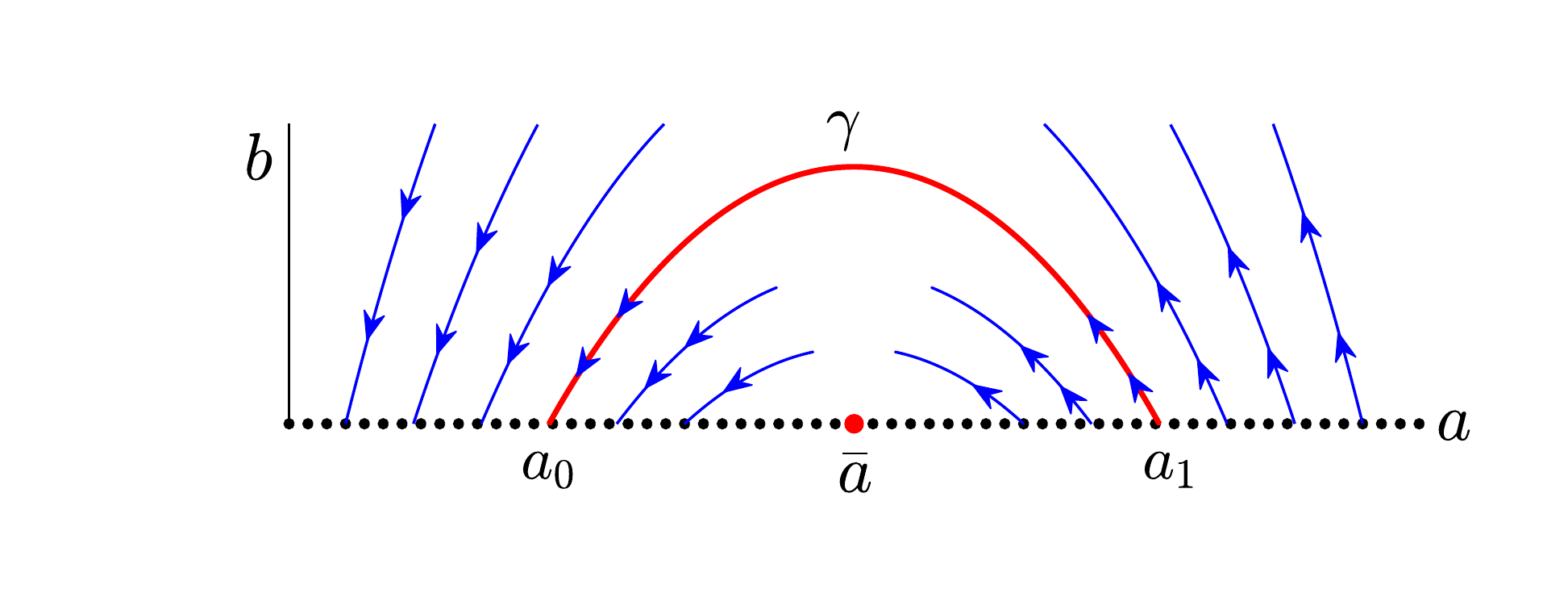}}
\end{center}
\caption{
For system \eqref{fast_ab} with $\epsilon=0$,
the $a$-axis is a line of equilibria
and
$\gamma$ is a heteroclinic orbit
connecting $(a_0,0)$ and $(a_1,0)$.
}
\label{fig_bifdelay}
\end{figure} 
\begin{enumerate}
\item[(i)]
There is a trajectory $\gamma$ of \eqref{fast_ab} satisfying
\begin{equation*}
  \lim_{t\to -\infty}\gamma(t)= (a_0,0),
  \quad
  \lim_{t\to \infty}\gamma(t)= (a_1,0);
\end{equation*}
\item[(ii)]
$F(a,0,0)>0$ for all $a\in [a_0,a_1]$;
\vspace{.5em}
\item[(iii)]
$G(a_0,0,0)<0$ and $G(a_1,0,0)>0$;
\vspace{.5em}
\item[(iv)]
$\displaystyle\int_{a_0}^{a_1}\frac{G(a,0,0)}{F(a,0,0)}\;da=0$
\;and\;
$\displaystyle\int_{a_0}^{s}\frac{G(a,0,0)}{F(a,0,0)}\;da<0$
\;$\forall s\in (a_0,a_1)$.
\end{enumerate}
We provide an alternative proof
of the following theorem from
Hsu and Wolkowicz~\cite{Hsu:2019criterion}.

\begin{theorem}
Consider system \eqref{deq_ab}.
Assume $\mathrm{(i)}$--$\mathrm{(iv)}$ and let \begin{equation*}
  \lambda=\ln \left|\frac{F(a_1,0,0)}{F(a_0,0,0)}\right|
  + \int_{\gamma}\frac{\partial_aH}{H}\;da
  + \int_{\gamma}\frac{\partial_bG}{G}\;db.
\end{equation*}
If $\lambda\ne 0$,
then $\gamma$ admits a relaxation oscillation
which is formed by
locally unique periodic orbits for small $\epsilon>0$.
Moreover,
the periodic orbit
is orbitally asymptotically stable if $\lambda< 0$
and unstable if $\lambda> 0$.
\end{theorem}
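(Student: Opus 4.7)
\medskip

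\textbf{Proof plan.} The plan is to recognize system \eqref{deq_ab} as the $n=m=N=1$ case of the framework of Section~\ref{sec_main3}, verify Assumptions~\ref{hyp_cmin}--\ref{hyp_int}, apply Theorem~\ref{thm_main3}, and reduce the resulting scalar multiplier $D\widetilde{P}(A_1,\zeta_1)$ to $e^{\lambda}$ (up to sign) via the explicit formulas in Propositions~\ref{prop_DQi} and~\ref{prop_Dpi}.

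First I would translate the setup: take $p=a$, $z=b$, $z_1=0$, $M_1=\{b=0\}$, $J_1=1$, and let $A_1,B_1\in\{b=0\}$ be the jumping and landing points identified with $(a_1,0)$ and $(a_0,0)$ (in one order or the other, determined by the direction of the slow flow $a'=F(a,0,0)>0$). Assumption~\ref{hyp_cmin} is automatic since the fast right-hand sides carry a factor of $b$. Assumption~\ref{hyp_fast} is (i) and the assumption that $G$ changes sign between the endpoints, Assumption~\ref{hyp_slow} is (ii), Assumption~\ref{hyp_nondegen} is (iii), and Assumption~\ref{hyp_int} is exactly the entry-exit condition (iv) after the substitution $d\tau=da/F(a,0,0)$ on $M_1$ and the identification $\partial g/\partial b|_{b=0}=G(a,0,0)$.

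Next I would specialize $\widetilde{P}$ in \eqref{def_Ptilde} to $N=1$. Because $m=1$, the set $\Lambda_1$ defined by \eqref{def_Lambda} is a single point, so the $\mathrm{id}$ factor is trivial, and $\widehat{Q}_1$ reduces to the classical entry-exit map $Q_1$ (by the remark following \eqref{def_QiHat}). Thus $\widetilde{P}=\pi_1\circ Q_1$ is a scalar self-map of a neighborhood of $A_1$ in $\mathbb{R}$. By the chain rule,
\begin{equation*}
  D\widetilde{P}(A_1)\;=\;D\pi_1(B_1)\cdot DQ_1(A_1).
\end{equation*}
I would then plug in the two explicit formulas: the scalar version of \eqref{DQi_fBi} gives
\begin{equation*}
  DQ_1(A_1)\;=\;\frac{G(A_1,0,0)}{G(B_1,0,0)}\cdot\frac{F(B_1,0,0)}{F(A_1,0,0)},
\end{equation*}
since $\partial g/\partial b|_{b=0}=G(\cdot,0,0)$ and $f(\cdot,0,0)=F(\cdot,0,0)$; and \eqref{Dpi1}, using that $\phi_1(q)=1/q$ makes $g_1=G$ and $h_1=H$, gives
\begin{equation*}
  D\pi_1(B_1)\;=\;\frac{G(B_1,0,0)}{G(A_1,0,0)}\,\exp\!\left(\int_0^{t_1}\!\bigl(\partial_a H+\partial_b G\bigr)\bigl(\bar{\gamma}_1(t)\bigr)\,dt\right).
\end{equation*}
The $G$-ratios cancel, leaving $D\widetilde{P}(A_1)=\dfrac{F(B_1,0,0)}{F(A_1,0,0)}\,\exp\!\bigl(\int_{\gamma}(\partial_aH+\partial_bG)\,dt\bigr)$.

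Finally, I would rewrite the integral as the two line integrals appearing in the definition of $\lambda$. Along the rescaled fast orbit governed by \eqref{fast_pq}, which is $\dot{a}=H$, $\dot{b}=G$, we have $da=H\,dt$ and $db=G\,dt$, so
\begin{equation*}
  \int_{0}^{t_1}\!\!\partial_a H\,dt=\int_{\gamma}\frac{\partial_aH}{H}\,da,\qquad
  \int_{0}^{t_1}\!\!\partial_b G\,dt=\int_{\gamma}\frac{\partial_bG}{G}\,db,
\end{equation*}
which identifies $|D\widetilde{P}(A_1)|=e^{\lambda}$. The hypothesis $\lambda\ne 0$ becomes $D\widetilde{P}(A_1)\ne 1$, so Theorem~\ref{thm_main3} produces a locally unique periodic orbit for small $\epsilon>0$; its stability assertion translates directly into the sign dichotomy for $\lambda$. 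The main obstacle I anticipate is purely bookkeeping: matching the cyclic labeling conventions ($B_0=B_N$), keeping careful track of which endpoint is $A_1$ and which is $B_1$ so that the ratio $F(B_1,0,0)/F(A_1,0,0)$ agrees in magnitude with $F(a_1,0,0)/F(a_0,0,0)$, and verifying that the singular cancellation in $\phi_1=1/q$ correctly identifies $g_1,h_1$ with $G,H$ on the orbit $\gamma$ that lies in $\{b>0\}$ (or $\{b<0\}$, depending on orientation).
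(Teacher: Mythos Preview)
Your proposal is correct and follows essentially the same route as the paper: both reduce $\widetilde P$ to the scalar composition $\pi\circ Q$, compute $DQ$ via \eqref{DQi_fBi} and $D\pi$ via \eqref{Dpi1}, observe that the $G$-ratios cancel, and then convert the time integral $\int(\partial_aH+\partial_bG)\,dt$ into the two line integrals using $da=H\,dt$, $db=G\,dt$ along the rescaled fast flow. Your write-up is in fact slightly more careful than the paper's in two places---you explicitly note that $\Lambda_1$ collapses to a point so that $\widehat Q_1=Q_1$, and you correctly identify the relevant time parametrization as that of \eqref{fast_pq} (i.e.\ $\dot a=H$, $\dot b=G$) rather than \eqref{fast_ab}---and your anticipated ``bookkeeping obstacle'' about which endpoint is $A_1$ versus $B_1$ is exactly the only thing left to pin down.
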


\begin{remark}
Assumptions (i) and (iv)
are weaker than the conditions assumed in \cite{Hsu:2019criterion}.
In that paper, the assumption corresponding to (i)
is that there exists a smooth family of heteroclinic orbits;
the assumption corresponding to the inequalities in (iii) and (iv)
is that $G(a,0,0)<0$ for $a<\bar{a}$ and $G(a,0,0)>0$ for $a>\bar{a}$.
However, the analysis
in that paper is also valid under these weaker assumptions.
\end{remark}

\begin{proof}
Define a function $Q$
implicitly by $Q(a_0)=a_1$ and \begin{equation}\label{def_Qab}
  \int_{a_0}^{Q(a)}
  \frac{G(r,0,0)}{F(r,0,0)}\;dr=0.
\end{equation}
By \eqref{DQi_fBi} in 
Proposition~\ref{prop_DQi}, \begin{equation}\label{DQ_ab}
  \frac{dQ(a_0)}{da}
  = \frac{F(a_1,0,0)}{G(a_1,0,0)}
  \;\frac{G(a_0,0,0)}{F(a_0,0,0)}.
\end{equation}
(Note that equation \eqref{DQ_ab}
can also be derived directly by differentiating \eqref{def_Qab}.)

Let $\pi$ be the transition map of \eqref{fast_ab}
from a neighborhood of $(a_1,0)$
to a neighborhood $(a_0,0)$
in the $a$-axis.
By \eqref{Dpi1} in Proposition~\ref{prop_Dpi},
\begin{equation}\label{Dpi_ab}
  \frac{d}{da}\pi(a_1)
  = \frac{G(a_1,0,0)}{G(a_0,0,0)}
  \exp\left(\int_{\gamma}\partial_aH+\partial_bG\;dt\right).
\end{equation}
By \eqref{Dpi_ab} and \eqref{DQ_ab},
we obtain \begin{equation*}\begin{aligned}
  \frac{d}{da}(\pi\circ Q)
  &=\frac{d\pi(a_1)}{da}\,\frac{dQ(a_0)}{da}
  \\
  &= \left(
  \frac{F(a_1,0,0)}{G(a_1,0,0)}\;\frac{G(a_0,0,0)}{F(a_0,0,0)}
  \right)
  \frac{G(a_1,0,0)}{G(a_0,0,0)}
  \exp\left(\int_0^T\partial_aH+\partial_bG\;dt\right).
\end{aligned}\end{equation*}
Using the relations $da/dt=H$ and $da/dt=G$ in \eqref{fast_ab}, 
it follows that \begin{equation*}
  \frac{d}{da}(\pi\circ Q)(a_0)
  =\frac{F(a_1,0,0)}{F(a_0,0,0)}
  \exp\left(
    \int_{\gamma}\frac{\partial_aH}{H}\;da
    + \int_{\gamma}\frac{\partial_bG}{H}\;db.
  \right).
\end{equation*}
Hence \begin{equation*}
  \ln \left|\frac{d}{da}(\pi\circ P)(a_0)\right|
  =\ln \left|\frac{F(a_1,0,0)}{F(a_0,0,0)}\right|
  + \int_{\gamma}\frac{\partial_aH}{H}\;da
  + \int_{\gamma}\frac{\partial_bG}{H}\;db.
\end{equation*}
Hence $\lambda<0$ if and only if $\left|\frac{d}{da}(\pi\circ P)(a_0)\right|<1$.
By Theorem~\ref{thm_main3},
the desired result follows.
\end{proof}


\end{document}